\newtheorem{theorem}{Theorem}[section]
\newtheorem{lemma}[theorem]{Lemma}
\newtheorem{proposition}[theorem]{Proposition}
\newtheorem{corollary}[theorem]{Corollary}
\theoremstyle{definition}
\newtheorem{definition}[theorem]{Definition}
\newtheorem{assumption}[theorem]{Assumption}
\theoremstyle{remark}
\newtheorem{remark}[theorem]{Remark}
\DeclareMathOperator*{\cardinality}{cardinality}
\numberwithin{equation}{section}
\def\p{\partial}
\def\normal{{\hat {\mathbf{n}}}}
\def\rmh{{h}}
\def\bk{{\bf k}}
\def\yhat{{\hat {\mathbf{y}}}}
\def\domain{{\mathcal D}}
\def\reals{{\mathbb R}}
\newcommand{\bx}{\mathbf{x}}
\newcommand{\bu}{\mathbf{u}}
\newcommand{\bff}{\mathbf{f}}
\newcommand{\dd}{\,\mathrm{d}}
\newcommand{\dx}{\, \mathrm{d} \mathbf{x}}
\newcommand{\dt}{\, \mathrm{d}t}
\begin{document}

\title{Theoretical and  computational analysis of the thermal quasi-geostrophic model}

\author{D Crisan, DD Holm, E Luesink, PR Mensah, W Pan}

\address{DEPARTMENT OF MATHEMATICS, IMPERIAL COLLEGE, LONDON SW7 2AZ, UK.}
\email{d.crisan@imperial.ac.uk, d.holm@imperial.ac.uk, el1616@ic.ac.uk,} \email{p.mensah@imperial.ac.uk, wei.pan@imperial.ac.uk}



\date{\today}



\begin{abstract}
This work involves theoretical and numerical analysis of the Thermal Quasi-Geostrophic (TQG) model of submesoscale geophysical fluid dynamics (GFD). Physically, the TQG model involves thermal geostrophic balance, in which the Rossby number, the Froude number and the stratification parameter are all of the same asymptotic order. The main analytical contribution of this paper is to construct local-in-time unique strong solutions for the TQG model. For this, we show that solutions of its regularized version $\alpha$-TQG converge to solutions of TQG as its smoothing parameter $\alpha \rightarrow 0$ and we obtain blowup criteria for the $\alpha$-TQG model.
The main contribution of the computational analysis is to verify the rate of convergence of $\alpha$-TQG solutions to TQG solutions as $\alpha \rightarrow 0$ for example simulations in appropriate GFD regimes.
\end{abstract}

\maketitle

\section{Introduction}

\subsection{Purpose}
The thermal quasi-geostrophic (TQG) equations comprise a mesoscale model of ocean dynamics in a solution regime near thermal geostrophic balance. In thermal geostrophic balance, three forces -- the Coriolis, hydrostatic pressure gradient and buoyancy-gradient forces -- sum to zero. Numerical simulations of the TQG equations show the onset of instability at high wavenumbers which creates small coherent structures which resemble submesoscale (1–20 km) features observed in satellite ocean color images as seen in figure \ref{fig: submesoscales}. 

\begin{figure}[h!]
\centering
\includegraphics[width=0.45\textwidth]{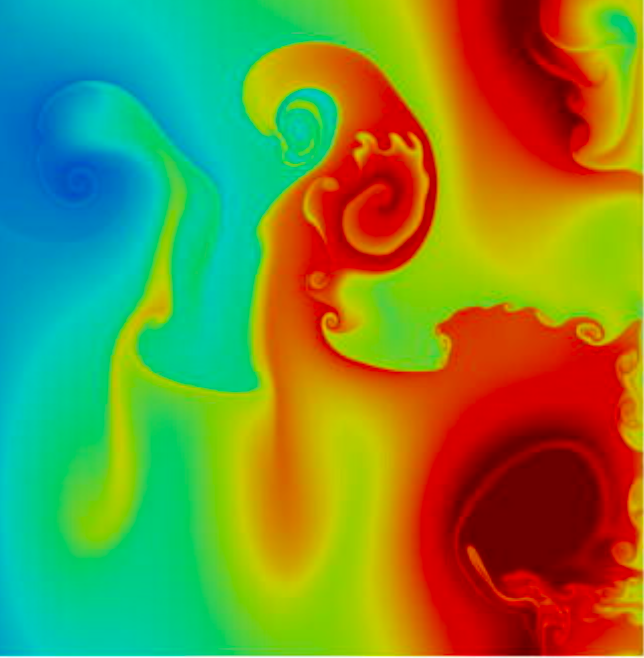}\
\includegraphics[width=0.463\textwidth]{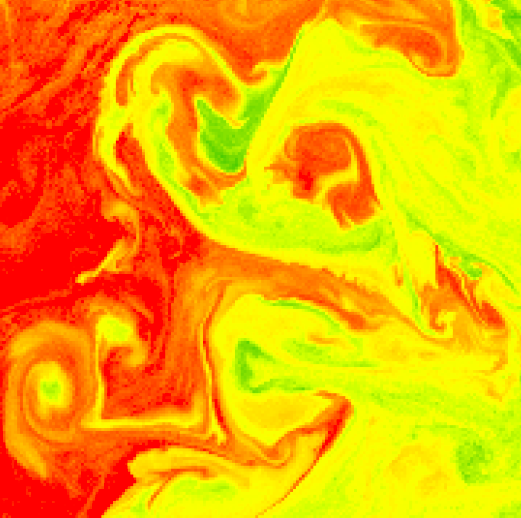}
\caption{Comparison of a computational simulation of TQG solutions with a satellite observation of ocean colour on a section of the Lofoten Vortex, courtesy of \url{https://ovl.oceandatalab.com/} which illustrates the configurations of submesoscale currents obtained from ESA Sentinel-3 OLCI instrument observations of chlorophyll on the surface of the Norwegian Sea in the Lofoten Basin, near the Faroe Islands.}
\label{fig: submesoscales}
\end{figure}

On the left panel of Figure \ref{fig: submesoscales} one sees submesoscale features which are prominently displayed in computational simulations of TQG equations for sea-surface height (SSH). The right panel of Figure \ref{fig: submesoscales} shows the surface of the Lofoten Basin off the coast of Norway near the Faroe Islands. In crossing the Lofoten Basin, warm saline Atlantic waters create buoyancy fronts as they meet the cold currents of the Arctic Ocean. Figure \ref{fig: submesoscales} displays several features of submesoscale currents surveyed in  \cite{mcwilliams2019survey}. High resolution (4km) computational simulations of the Lofoten Vortex have recently discovered that its time-mean circulation is primarily barotropic, \cite{volkov2015formation}, thereby making the flow in the Lofoten Basin a reasonable candidate for investigation using vertically averaged dynamics such as the TQG dynamical system. Both images show a plethora of multiscale features involving shear interactions of vortices, fronts, plumes, spirals, jets and Kelvin-Helmholtz roll-ups.
The submesoscale features persist and interact strongly with each other in Kelvin-Helmholtz roll-up dynamics, instead of simply cascading energy to higher wavenumbers. 
This observation means that the instabilities which create these submesoscale features quickly regain stability without cascading them to ever smaller scales.
The present work aims to understand these features of the TQG solution dynamics, both analytically and numerically. 
\medskip

\paragraph{\bf The contributions of this paper.}  

The main analytical contribution of this paper is the construction of a unique local strong solution of the TQG model. In particular, we show that there is a unique maximal solution of the TQG equation defined on a (possibly infinite) time interval $[0, T_{\mathrm{max}})$. The solution will be shown to exist in a suitable Sobolev space. More precisely, provided that the initial data resides in a chosen Sobolev space, the solution at time $t>0$ will remain in this space as long as $t<T_{\mathrm{max}}$. Should $t<T_{\mathrm{max}}$ be finite, then the solution will blow up in the chosen Sobolev norm.

As a second analytical contribution, we show that the TQG model depends continuously on the initial condition. This dependence only holds in a slightly weaker norm than the one corresponding to the space where the solution resides.
This is a useful property from a numerical perspective. It implies that initial small errors when simulating the TQG model will stay small at any subsequent time.

The third analytical contribution is to construct a regularized version of the TQG model, termed the $\alpha$-TQG model. This model is constructed in a similar manner as the $\alpha$ model for the Euler and Navier--Stokes equations, see \cite{FHT2001, FHT2002, marsden2003the}.
The $\alpha$-TQG equations have a unique maximal solution which is also only continuous with respect to the initial conditions in a larger space with weaker norm. 
In addition, we show that the $\alpha$-TQG solution converges to the TQG solution as $\alpha \rightarrow 0$ in a norm that depends on two physical parameters, the vorticity and the gradient of buoyancy.   
We also identify the rate of convergence as a function of the $\alpha$-parameter on a time interval where both the TQG solution as well as the $\alpha$-TQG solution are shown to exist for any $\alpha>0$.

The blow-up phenomenon is important, particularly if it is observed in numerical simulations. Therefore, blow-up criteria (in other words, criteria required for the solution to blow up) are important. In this paper we state three characterizations of the blow-up time. The fourth analytical contribution of this paper is to show that blow-up occurs in the $\alpha$-TQG model, if either:
\begin{enumerate}
    \item the $L^\infty(\mathbb{T}^2)$-norm of the buoyancy gradient $\nabla b$ blows up at $T_{\max}<\infty$; 
    \item the $L^\infty(\mathbb{T}^2)$-norm of the velocity gradient $\nabla \mathbf{u}$ blows up at $T_{\max}<\infty$ or that;    
    \item the $W^{1,2}(\mathbb{T}^2)$-norm of the buoyancy gradient $\nabla b$ blows up at $T_{\max}<\infty$.
\end{enumerate}
 
The contribution of this paper from a numerical perspective is to describe our spatial and temporal discretisation methods for approximating TQG and $\alpha$-TQG solutions, and to analyse aspects of numerical conservation properties with respect to the theoretical conserved quantities of the $\alpha$-TQG system. 
Example simulation results are included and are used to verify numerically the theoretical convergence results.
Additionally, we provide linear stability analysis results for the $\alpha$-TQG system. Given that we have convergence of $\alpha$-TQG solutions to TQG, the linear stability results can be viewed as generalisations of those shown in \cite{holm2021stochastica} for the TQG system.


\subsection{Brief history of the TQG model}
The history of the TQG model goes back about half a century, perhaps first elucidated by O'Brien and Reid \cite{o1967non} as sketched, in \cite{beron2021nonlinear}. Briefly put, the TQG model generalises the classical QG equations by introducing horizontal gradients of buoyancy which alter the geostrophic balance to include the inhomogeneous thermal effects which influence buoyancy. Indeed, O'Brien and Reid \cite{o1967non} write that their work was inspired by observations that the passage of hurricanes could draw enough heat from the ocean to significantly lower the sea-surface temperature in the Gulf of Mexico. Since O'Brien and Reid \cite{o1967non} introduced their two-layer model, further developments of it have been applied to a variety of ocean processes, particularly to equatorial dynamics. For more details of the theoretical model developments, see Ripa \cite{ripa1993conservation, ripa1995improving, ripa1999validity} and for developments of applications in oceanography see \cite{anderson1985role, beier1997numerical, mccreary1997coastal, schopf1983equatorial}, as well as other citations in \cite{beron2021nonlinear}. In  particular,  Ripa  refers  to  the  TQ  models  as inhomogeneous-layer (IL) models and his papers explain rational derivations of theories with increasing vertical structure IL1, IL2, etc.  The TQG model analysed here and derived systematically from asymptotic expansions in small dimensionless parameters of the Hamilton’s principle for the rotating, stratified Euler equations in \cite{holm2021stochastica} is equivalent to Ripa’s model IL0QG \cite{ripa1996low} recently analyzed in \cite{beron21multilayer}.

\subsection{A sketch of the derivation of the TQG model}\label{subsec-TQGderived}
We have explained that certain thermal effects in the mesoscale ocean have historically been modelled by the thermal quasi-geostrophic (TQG) equations. TQG is characterised by several  dimensionless numbers arising from the dimensional parameters of planetary rotation, gravity and buoyancy. These are the familiar Rossby number, Froude number and stratification parameter.  The Rossby number is the ratio of a typical horizontal velocity divided by the product of the rotation frequency and a typical horizontal length scale. The Froude number is the ratio of a typical horizontal velocity divided by the velocity of the fastest propagating gravity wave, which in turn is given by the square root of the gravity times the typical vertical length scale. The final dimensionless number is the stratification parameter, which specifies the typical size of the buoyancy stratification. 

The regime in which the thermal quasi-geostrophic equations are derived is characterised by the thermal geostrophic balance. This balance arises because the Rossby number, the Froude number and the stratification parameter all have a similar amplitude. Preserving this three-fold balance requires simultaneously adapting the Froude number and the stratification parameter to match any change in Rossby number, for example, so that the dimensionless parameters will still have the same size. We consider the non-dissipative case, because of the large scales of mesoscale ocean dynamics. As mentioned earlier, the mesoscale dynamics has high wavenumber instabilities, which in principle can generate submesoscale effects. At smaller scales, viscous dissipation and thermal diffusivity will come into play as well. However, in what follows, dissipative effects will be neglected. The derivation of the TQG model involves a series of coordinated approximations to the rotating, stratified Euler model, as is illustrated in the diagram below.
\begin{figure}[H]
\scriptsize
\centering
\begin{tikzcd}
[row sep = 5em, 
column sep=2em, 
cells = {nodes={top color=green!20, bottom color=blue!20,draw=black}},
arrows = {draw = black, rightarrow, line width = .015cm}]
& &
{\begin{matrix}
\text{Rotating, stratified} \\ \text{Euler equations}
\end{matrix}}
\arrow[d,"{\begin{matrix} \text{small buoyancy}\\ \text{stratification}\end{matrix}}", shorten <= 1mm, shorten >= 1mm, red!80] 
&\\
& {\begin{matrix} \text{Primitive} \\ \text{equations} \end{matrix}} \arrow[d,"{\begin{matrix} \text{vertical average,}\\ \text{small wave amplitude}\end{matrix}}"', shorten <= 1mm, shorten >= 1mm, red!80]
& 
{\begin{matrix} \text{Euler--Boussinesq}\\ \text{equations}\end{matrix}} \arrow[dr, "{\begin{matrix} \text{vertical average,}\\ \text{very small wave amplitude}\end{matrix}}", shorten <= 4mm, shorten >= 4mm, dashed] \arrow[d, "{\begin{matrix} \text{vertical average,}\\ \text{small wave amplitude}\end{matrix}}"', shorten <= 1mm, shorten >= 1mm, red!80] \arrow[l,"\begin{tabular}{l}\tiny \text{hydrostatic} \\\tiny \text{approx.} \\ \text{ } \end{tabular}"', shorten <= 1mm, shorten >= 1mm, red!80] &
\\
&
{\begin{matrix} \text{Thermal rotating}\\ \text{shallow water} \end{matrix}}
\arrow[dd,"\text{restrict to 1D}", shorten <= 1mm, shorten >= 1mm, dashed] \arrow[dl,"\begin{tabular}{l}
\tiny \text{asymptotic expansion around} \\
\tiny \text{thermal geostrophic balance}
\end{tabular}"', shorten < =4mm, shorten >= 4mm, red!80]
& 
{\begin{matrix} \text{Thermal rotating}\\ \text{Green--Naghdi equations} \end{matrix}} \arrow[dd, "\text{restrict to 1D}", shorten <= 1mm, shorten >= 1mm, dashed] \arrow[l,"\begin{tabular}{l} \\ \\ \tiny \text{hydrostatic}\\ \tiny \text{approx.} \end{tabular}", shorten <= 1mm, shorten >= 1mm, red!80] \arrow[r, "\begin{tabular}{l} \\ \\ \tiny \text{rigid lid} \\ \text{ } \end{tabular}"', shorten <= 1mm, shorten >= 1mm, dashed]
& 
{\begin{matrix} \text{Thermal rotating}\\ \text{great lake equations} \end{matrix}} \arrow[d, "\begin{tabular}{l} \tiny \text{hydrostatic} \\ \tiny\text{approx.} \end{tabular}", shorten <= 1mm, shorten >= 1mm, dashed] 
\\
{\text{Thermal Lagrangian 1}}
\arrow[d, "\text{strict asymptotics}"', shorten <= 1mm, shorten >= 1mm, red!80]
&
&
& 
{\begin{matrix} \text{Thermal rotating}\\ \text{lake equations}\end{matrix}}
\\
{\begin{matrix} \text{Thermal quasi-geostrophic} \\ \textbf{equations} \end{matrix}}
& {\begin{matrix} \text{Saint-Venant}\\ \text{family}\end{matrix}}
& {\begin{matrix} \text{Camassa-Holm}\\ \text{family}\end{matrix}} \arrow[l,"\begin{tabular}{l} \\ \tiny \text{hydrostatic}\\ \tiny \text{approx.} \end{tabular}", shorten <= 1mm, shorten >= 1mm, dashed] 
&
\end{tikzcd}
\caption{The tree of model derivations can function as a roadmap of geophysical fluid dynamics. The solid red arrows indicate the sequence of approximations that lead to the thermal quasi-geostrophic equations.}
\label{fig:introductiontree}
\end{figure}
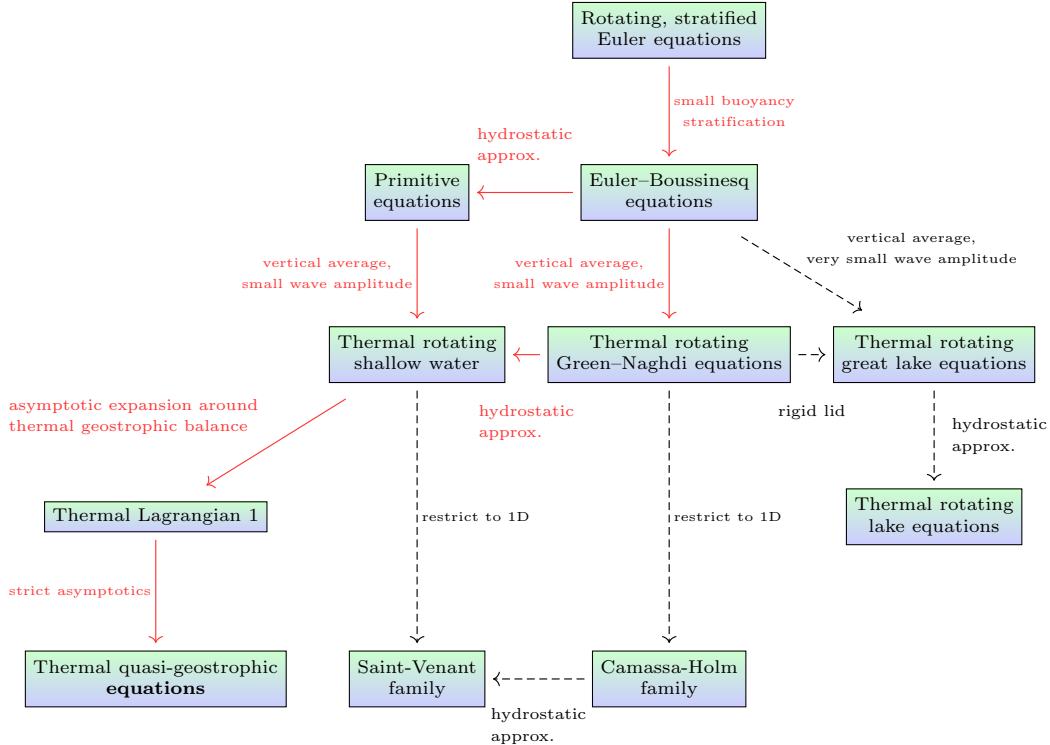
A discussion of the right-most three columns tree in Figure \ref{fig:introductiontree} can be found in \cite{holm2021stochastic}. The derivation of the thermal quasi-geostrophic equations treated here from the thermal rotating shallow water equations on the middle left of the figure can be found in \cite{holm2021stochastica}. The derivation of the thermal quasi-geostrophic equations following the solid red arrows in Figure \ref{fig:introductiontree} starts with the Lagrangian for the rotating, stratified Euler equations at the top of the figure. After identifying the small dimensionless parameters in the Euler Lagrangian, the Lagrangians for the successive approximate models can be derived by inserting asymptotic expansions into the Euler Lagrangian. In the ocean, the buoyancy stratification is typically small. Hence, it makes sense to apply the Boussinesq approximation in the Lagrangian for the rotating, stratified Euler equations. This approximation yields the Lagrangian for the Euler--Boussinesq equations. At this point, one has a choice of two routes. The first route begins by making the hydrostatic approximation, which leads to the Lagrangian for the primitive equations. Then, upon vertically integrating the Lagrangian for the primitive equations, one finds the Lagrangian for the thermal rotating shallow water equations. The alternative route first vertically integrates the Lagrangian for the Euler--Boussinesq equations to obtain the Lagrangian for the thermal and rotating version of the Green--Naghdi equations. By subsequently making the hydrostatic approximation in the Lagrangian for the thermal rotating Green--Naghdi equations, the alternative route arrives at the Lagrangian for the thermal rotating shallow water equations. 

The Lagrangian for the thermal rotating shallow water (TRSW) equations is the starting point in \cite{holm2021stochastica} for the derivation of the thermal quasi-geostrophic (TQG) equations. The typical values of the dimensionless numbers for mesoscale ocean problems lead to thermal geostrophic balance. This balance implies an algebraic expression for the balanced velocity field in terms of the horizontal gradients of the free surface elevation and the buoyancy. Upon expanding the TRSW Lagrangian around this balance and truncating, one obtains the Lagrangian for the thermal Lagrangian 1 (L1) model. The Lagrangian for the thermal L1 model is not hyperregular, though. Hence, the Legendre transformation is for thermal L1 is not invertible. Hence, no Hamiltonian description of this model is available via the Legendre transformation. Nonetheless, an application of the Euler-Poincar\'e theorem \cite{holm1998euler} to the thermal L1 Lagrangian yields the corresponding equations of motion. By identifying the leading order terms in the thermal L1 equations and truncating the asymptotic expansion, one finally obtains the TQG equations. However, the truncations of the asymptotic expansions in the thermal L1 equations also prevent the resulting TQG equations from possessing a Hamilton's principle derivation. This feature is unlike the other models in figure \ref{fig:introductiontree}, which all arise from approximated Lagrangians in their corresponding action integrals for Hamilton's principle. However, as it turns out, the thermal quasi-geostrophic equations do possess a Hamiltonian formulation in terms of a non-standard Lie-Poisson bracket. The Lie-Poisson bracket for the quasi-geostrophic equations can be obtained via a linear change of variables from the usual semi-direct product Lie-Poisson bracket for fluids. The resulting Hamiltonian formulation for the thermal quasi-geostrophic equations is important for the future application of stochastic advection by Lie transport (SALT), introduced in \cite{holm2015variational}, which requires either a Lagrangian, or a Hamiltonian interpretation of the equations of motion. 

By applying the method of Stochastic Advection by Lie Transport (SALT) to the Hamiltonian formulation of the TQG equations, one obtains a stochastic version of them which preserves the infinite family of integral conserved quantities. More details on the stochastic version can be found in the conclusion section as well as in \cite{holm2021stochastica}. A discussion of the remaining models in Figure \ref{fig:introductiontree} can be found in \cite{holm2021stochastic}.

The TQG model on the two dimensional flat torus $\mathbb{T}^2$ can be formulated in two equivalent ways. The first formulation is the advective formulation, in which the equations of motion are given by
\begin{align}
\frac{\partial}{\partial t}b + \mathbf{u}\cdot\nabla b &= 0,
\label{ce}\\
\frac{\partial}{\partial t}\omega + \mathbf{u}\cdot\nabla(\omega-b) &= -\mathbf{u}_{h_1}\cdot\nabla b,
\label{me}\\
\omega = (\Delta-1)\psi + f_1&
\,, \quad {\rm div} \mathbf{u} = 0
\,.
\label{constrtXXX}
\end{align}
Here $b$ is the vertically averaged buoyancy, $\mathbf{u}$ is the thermal geostrophically balanced velocity field, $\omega$ is the potential vorticity, $\psi$ is the streamfunction, $h_1$ is the spatial variation around a constant bathymetry profile and $f_1$ is the spatial variation around a constant background rotation rate. The velocity $\mathbf{u}$ and the streamfunction $\psi$ are related by the equation 
\begin{equation}
\label{constrtVelocity}
\mathbf{u} = \nabla^\perp \psi,
\end{equation}
where $\nabla^\perp = (-\partial_y,\partial_x)$. The vector field $\mathbf{u}_{h_1}$ is defined by 
\begin{equation}
\label{constrt}
\mathbf{u}_{h_1} := \frac{1}{2}\nabla^\perp h_1.
\end{equation}
Alternatively, one can formulate the thermal quasi-geostrophic (TQG) equations in vorticity-streamfunction form. This formulation is given by
\begin{align}
\frac{\partial}{\partial t} b + J(\psi,b) &= 0,
\label{ceZero}\\
\frac{\partial}{\partial t}\omega + J(\psi,\omega-b) &= -\frac{1}{2}J(h_1,b),
\label{meZero}\\
\omega &= (\Delta-1)\psi + f_1.
\label{constrtZero}
\end{align}
The operator $J(a,b)=\nabla^\perp a\cdot \nabla b=a_x b_y - b_x a_y$ is the Jacobian of two smooth functions $a$ and $b$ defined on the $(x,y)$ plane.
The equivalence of the two formulations \eqref{ce}--\eqref{me} and \eqref{ceZero}--\eqref{meZero} follows from the Jacobian operator relation $J(\psi,a) = \mathbf{u}\cdot\nabla a$, in which $\psi$ is the streamfunction associated to the velocity vector field $\mathbf{u}$. The scalar functions $f_1$ and $h_1$ relate to the usual Coriolis parameter and bathymetry profile in the following way
\begin{equation}
\begin{aligned}
h(\mathbf{x}) &= 1 + {\rm Ro}\, h_1(\mathbf{x}),\\
f(\mathbf{x}) &= 1 + {\rm Ro}\, f_1(\mathbf{x}),
\end{aligned}
\label{eq:expansionhf}
\end{equation}
where ${\rm Ro} = U(f_0 L)^{-1}$ is the Rossby number, expressed in terms of the typical horizontal velocity $U$, typical rotation frequency $f_0$ and typical horizontal length scale $L$. This means that the bathymetry and Coriolis parameter become constant as the Rossby number tends to zero. Equations \eqref{eq:expansionhf} are necessary to derive the thermal quasi-geostrophic equations from the thermal L1 equations, as shown in \cite{holm2021stochastica}. The expansion \eqref{eq:expansionhf} contains the $\beta$-plane approximation provided that the boundary conditions are appropriate. Namely, on the $\beta$-plane, one requires that $\beta f_0^{-1} = \mathcal{O}({\rm Ro})$ and $f_1(\mathbf{x})=y$. An additional relation can be helpful when using the thermal quasi-geostrophic equations as a model for mesoscale ocean dynamics. In particular, the streamfunction is related to the free surface elevation $\zeta$ and buoyancy $b$ via the definition $\psi:=\zeta + \frac{1}{2}b$. This definition of the streamfunction is useful to relate to observational data. The free surface elevation is a quantity that can be measured with satellite altimetry. These measurements can then be used for data assimilation and model calibration. However, the definition of the streamfunction in terms of the velocity field is not necessary to formulate the model as a closed set of equations, since the system \eqref{ceZero}--\eqref{constrtZero} is already a closed set of equations. In the expansion \eqref{eq:expansionhf}, we will henceforth drop the subscript 1 on $h_1(\mathbf{x})$ and $f_1(\mathbf{x})$ for notational convenience.

\subsection{TQG versus Rayleigh-B\'enard convection with viscosity and thermal diffusivity}
The TQG equations in \eqref{ceZero}-\eqref{constrtZero} can be compared to the equations for Rayleigh-B\'enard convection in a vertical plane, as remarked in \cite{holm2021stochastica}. Recent results of \cite{cao2021algebraic} show that bounds exist for the enstrophy and temperature gradient of solutions lying in the attractor for the planar Rayleigh-B\'enard convection problem which are algebraic in the viscosity and thermal diffusivity. This result is a significant improvement over previously established estimates. This result also provides a motivation to study the TQG equations with viscous dissipation and thermal diffusivity. The equations for non-dissipative Rayleigh-B\'enard convection in the vertical plane are given by
\begin{equation}
\begin{aligned}
    \frac{\partial}{\partial t}\omega + J(\psi,\omega) &= \alpha g T_z,\\
    \frac{\partial}{\partial t}T + J(\psi,T) &= 0,\\
    \omega &= \Delta\psi,
\end{aligned}
\label{eq:rayleighbenard}
\end{equation}
where $\omega$ denotes the vorticity, $\psi$ is the stream function, $T$ is the temperature, $g$ is gravity and $\alpha$ is the thermal expansion coefficient. The Rayleigh-B\'enard equations \eqref{eq:rayleighbenard} have a long and illustrious history in mathematical analysis. The resemblance of the Rayleigh-B\'enard equations to the TQG equations suggests that perhaps the TQG equations will also provide a fruitful challenge to mathematical analysis. 

In TQG, the buoyancy plays the role of the temperature in the Rayleigh-B\'enard equations. By rearranging the TQG equations \eqref{constrtZero} such that the buoyancy terms all appear on the right hand side, one has
\begin{equation}
\begin{aligned}
    \frac{\partial}{\partial t}\omega + J(\psi,\omega) &= \frac{1}{2}J(\psi,b)+\frac{1}{2}J(\zeta-h,b),\\
    \frac{\partial}{\partial t}b + J(\psi,b) &= 0,\\
    \omega &= \Delta\psi-\psi + f.
\end{aligned}
\label{eq:tqgforcingfrom}
\end{equation} 
There are similarities and also several differences between the Rayleigh-B\'enard equations \eqref{eq:rayleighbenard} and the TQG equations \eqref{eq:tqgforcingfrom}. For example, the equation that relates the vorticity to the stream function is a Helmholtz equation for TQG and a Poisson equation for Rayleigh-B\'enard. In addition, in Rayleigh-B\'enard convection the forcing term on the right hand side of the vorticity equation depends only on the derivative of the temperature in the vertical direction. In TQG the forcing terms depends on the derivatives of the buoyancy in both directions. Thus, TQG arises as an interesting and challenging extension of the celebrated Rayleigh-B\'enard problem.

\subsection{Plan for the rest of the paper}
We now give the plan for the rest of the paper.
We collect preliminary tools in Section \ref{sec:prelim}. This includes notations and analytical properties of function spaces used throughout this paper. We also collect useful estimates that will be used at various stages and give  precise definitions of the concept of solutions used in our analysis. We finally end Section \ref{sec:prelim} with statements of the main results. In particular, we state that both the TQG and $\alpha$-TQG  equations  admit  unique strong solutions for a finite period of time and these solutions are stable in a larger space with weaker norm. Furthermore, a maximum time for these solutions exists. 

Since the proof of local well-posedness is the same for the TQG and $\alpha$-TQG, we will avoid duplication by devoting Section \ref{sec:construct}  to the construction of   solutions for  the less regular TQG. The construction  relies heavily on the standard energy method. Since we are constructing strong solutions (rather than weak ones), we differentiate the equations in space and then we test the resulting equations with the required differential of the solution to obtain the required bounds. We then end  Section \ref{sec:construct}  by showing that the unique solution constructed has a maximum time of existence and hence, is a maximal solution.

In Section \ref{sec:converge} we  show that any family of maximal solutions of the $\alpha$-TQG models converges strongly with $\alpha \rightarrow 0$ to the unique maximal solution of the TQG on a common existence time, provided  they share the same data.

Next, since our solutions are local in nature, we establish in Section \ref{sec:blowup}, conditions under which this solution may blow up in the sense of Beale--Kato--Majda \cite{beale1984remarks}. In particular, we show that in order to control the solution of $\alpha$-TQG, the essential supremum in space of both the buoyancy gradient and the velocity gradient should be integrable over the anticipated time interval. Once either of these gradients blow up, the solution ceases to exist. Alternatively, in order to control the solution, it suffices to control the $H^1$-Sobolev norm of the buoyancy gradient.


Section \ref{sec: numerics} is devoted to numerical methods and simulation results. We begin by describing the finite element method we use for the spatial derivatives, and aspects of its numerical conservation properties with respect to theoretical results. We then describe the finite difference discretisation method used for the time derivative. Next, we discuss $\alpha$-TQG linear thermal Rossby wave stability analysis, which can be seen as generalising the results shown in \cite{holm2021stochastica} for the TQG system. Then in the last part of the subsection, we discuss our numerical simulation setup and its results. In particular, we numerically verify the theoretical convergence rate for $\alpha\hbox{-TQG}\to\hbox{TQG}$ derived in Section \ref{sec:converge}.

\section{Preliminaries and main results}
\label{sec:prelim}
In this section, we fix the notation, collect some preliminary material on function spaces and present the main analytical results.
\begin{remark}
Although we will be working on the $2$-dimensional torus, with minimal effort, the same analysis will work on the whole plane $\mathbb{R}^2$ subject to a far-field condition. The case of a bounded domain with boundary conditions is however outside the scope of the analytical aspect of this work. See Section \ref{sec: numerics} for numerical works in this regard.
\end{remark}
\subsection{Notations}
Our independent variables consists of spatial points $x:=\mathbf{x}=(x, y)\in \mathbb{T}^2$ on the $2$-torus $\mathbb{T}^2$   and a time variable  $t\in [0,T]$ where $T>0$. For functions $F$ and $G$, we write $F \lesssim G$  if there exists  a generic constant $c>0$  such that $F \leq c\,G$.
We also write $F \lesssim_p G$ if the  constant  $c(p)>0$ depends on a variable $p$. The symbol $\vert \cdot \vert$ may be used in four different context. For a scalar function $f\in \mathbb{R}$, $\vert f\vert$ denotes the absolute value of $f$. For a vector $\bff\in \mathbb{R}^2$, $\vert \bff \vert$ denotes the Euclidean norm of $\bff$. For a square matrix $\mathbb{F}\in \mathbb{R}^{2\times 2}$, $\vert \mathbb{F} \vert$ shall denote the Frobenius norm $\sqrt{\mathrm{trace}(\mathbb{F}^T\mathbb{F})}$. Finally, if $S\subseteq  \mathbb{R}^2$ is  a (sub)set, then $\vert S \vert$ is the $2$-dimensional Lebesgue measure of $S$.
\\
For $k\in \mathbb{N}\cup\{0\}$ and $p\in [1,\infty]$, we denote by $W^{k,p}(\mathbb{T}^2)$, the Sobolev space of Lebesgue measurable functions whose weak derivatives up to order $k$ belongs to $L^p(\mathbb{T}^2)$. Its associated  norm is
\begin{align}
\Vert v \Vert_{W^{k,p}(\mathbb{T}^2)} =\sum_{\vert \beta\vert\leq k} \Vert \partial^\beta v \Vert_{L^{p}(\mathbb{T}^2)},
\end{align}
where $\beta$ is a $2$-tuple multi-index of nonnegative integers  of length $\vert \beta \vert \leq k$.
The Sobolev space $W^{k,p}(\mathbb{T}^2)$ is a Banach space. Moreover, $W^{k,2}(\mathbb{T}^2)$ is a Hilbert space when endowed with the inner product
\begin{align}
\langle u,v \rangle_{W^{k,2}(\mathbb{T}^2)} =\sum_{\vert \beta\vert\leq k} \langle \partial^\beta u\,,\, \partial^\beta v \rangle,
\end{align}
where $\langle\cdot\,,\,\rangle$ denotes the standard $L^2$-inner product.  
In general, for $s\in\mathbb{R}$, we will define the Sobolev space $H^s(\mathbb{T}^2)$  as consisting of  distributions $v$ defined on $\mathbb{T}^2$ for which the norm
\begin{align}
\label{sobolevNorm}
\Vert  v\Vert_{H^s(\mathbb{T}^2)}
=
 \bigg(\sum_{\xi \in \mathbb{Z}^2} \big(1+\vert \xi\vert^2  \big)^s\vert  \widehat{v}(\xi)\vert^2
  \bigg)^\frac{1}{2}
  \equiv
  \Vert  v\Vert_{W^{s,2}(\mathbb{T}^2)}
\end{align}
defined in frequency space is finite. Here, $\widehat{v}(\xi)$ denotes the Fourier coefficients  of $v$.
To shorten notation, we will write $\Vert  \cdot\Vert_{s,2}$ for $\Vert \cdot\Vert_{W^{s,2}(\mathbb{T}^2)}$
and/or $\Vert \cdot\Vert_{H^s(\mathbb{T}^2)}$.
When $k=s=0$, we get the usual $L^2(\mathbb{T}^2)$ space whose norm we will denote by $\Vert \cdot \Vert_2$ for simplicity.
We will also use a similar convention for norms $\Vert \cdot \Vert_p$ of general $L^p(\mathbb{T}^2)$ spaces for any $p\in [1,\infty]$ as well as for the inner product $\langle\cdot,\cdot \rangle_{k,2}:=\langle\cdot,\cdot \rangle_{W^{k,2}(\mathbb{T}^2)}$ when $k\in \mathbb{N}$. Additionally, we will denote by  $W^{k,p}_{\mathrm{div}}(\mathbb{T}^2;\mathbb{R}^2)$, the space of weakly divergence-free vector-valued functions in $W^{k,p}(\mathbb{T}^2)$.
Finally, we define the following space
\begin{align}
\mathcal{M}=W^{3,2}(\mathbb{T}^2) \times W^{2,2}(\mathbb{T}^2)  
\end{align}
endowed with the norm
\begin{align}
\Vert(b,\omega) \Vert_{\mathcal{M}}:= \Vert b \Vert_{3,2} +\Vert \omega \Vert_{2,2}
.
\end{align}
\subsection{Preliminary estimates}
We begin this section with the following result which follow from a direct computation  using the definition \eqref{sobolevNorm} of the Sobolev norms.
\begin{lemma} 
Let $k\in \mathbb{N}\cup\{0\}$ and assume that the triple $(\bu,w)$ satisfies
\begin{align}
\label{constrt0}
\bu =\nabla^\perp \psi, 
\qquad
w=(\Delta -1) \psi.
\end{align}
If $w\in W^{k,2}(\mathbb{T}^2)$, then the following estimate
\begin{align}
\label{lem:MasterEst}
\Vert \bu \Vert_{k+1,2}^2  &\lesssim \Vert w \Vert_{k,2}^2
\end{align}
holds.
\end{lemma}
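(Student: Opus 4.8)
The plan is to pass to Fourier space and reduce the inequality to a pointwise (in frequency) comparison of Fourier multipliers, which is elementary. Let $\widehat{\psi}(\xi)$, $\xi\in\mathbb{Z}^2$, denote the Fourier coefficients of the streamfunction $\psi$ that is associated with the pair $(\bu,w)$ through \eqref{constrt0}.

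First I would record how the two operators in \eqref{constrt0} act on the frequency side. Since $\nabla^\perp=(-\partial_y,\partial_x)$, we have $\widehat{\bu}(\xi)=(-i\xi_2,\,i\xi_1)\widehat{\psi}(\xi)$, so that $\vert\widehat{\bu}(\xi)\vert^2=\vert\xi\vert^2\,\vert\widehat{\psi}(\xi)\vert^2$. Likewise, because $\Delta$ acts as multiplication by $-\vert\xi\vert^2$ in the normalisation underlying \eqref{sobolevNorm}, the Helmholtz operator $\Delta-1$ gives $\widehat{w}(\xi)=-(1+\vert\xi\vert^2)\widehat{\psi}(\xi)$, and hence $\vert\widehat{w}(\xi)\vert^2=(1+\vert\xi\vert^2)^2\,\vert\widehat{\psi}(\xi)\vert^2$.

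Next I would substitute these identities into the definition \eqref{sobolevNorm} of the Sobolev norms. The left-hand side of \eqref{lem:MasterEst} becomes
\begin{align*}
\Vert\bu\Vert_{k+1,2}^2
=\sum_{\xi\in\mathbb{Z}^2}(1+\vert\xi\vert^2)^{k+1}\,\vert\widehat{\bu}(\xi)\vert^2
=\sum_{\xi\in\mathbb{Z}^2}(1+\vert\xi\vert^2)^{k+1}\,\vert\xi\vert^2\,\vert\widehat{\psi}(\xi)\vert^2,
\end{align*}
while the right-hand side becomes
\begin{align*}
\Vert w\Vert_{k,2}^2
=\sum_{\xi\in\mathbb{Z}^2}(1+\vert\xi\vert^2)^{k}\,\vert\widehat{w}(\xi)\vert^2
=\sum_{\xi\in\mathbb{Z}^2}(1+\vert\xi\vert^2)^{k+2}\,\vert\widehat{\psi}(\xi)\vert^2.
\end{align*}
The claim then follows termwise from the trivial bound $\vert\xi\vert^2\leq 1+\vert\xi\vert^2$, which yields $(1+\vert\xi\vert^2)^{k+1}\vert\xi\vert^2\leq(1+\vert\xi\vert^2)^{k+2}$ for every $\xi\in\mathbb{Z}^2$; summing over $\xi$ gives $\Vert\bu\Vert_{k+1,2}^2\leq\Vert w\Vert_{k,2}^2$, so \eqref{lem:MasterEst} in fact holds with implied constant $1$.

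Honestly, there is no genuine obstacle here: the estimate is a bookkeeping exercise once one is on the Fourier side. The only two points worth a remark are, first, the zero frequency $\xi=0$, where $\widehat{\bu}(0)=0$ automatically, so the mean of $\psi$ contributes nothing to the left-hand side and cannot spoil the inequality; and second, the gain of one derivative (from $W^{k,2}$ control of $w$ to $W^{k+1,2}$ control of $\bu$) is precisely the extra factor $(1+\vert\xi\vert^2)$ produced by the Helmholtz operator $\Delta-1$ rather than a bare gradient, which is exactly what balances the factor $\vert\xi\vert^2$ coming from $\nabla^\perp$.
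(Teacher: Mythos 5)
Your proof is correct and follows exactly the route the paper intends: the lemma is stated there as a consequence of ``a direct computation using the definition \eqref{sobolevNorm} of the Sobolev norms,'' which is precisely your Fourier-side multiplier comparison. Nothing to add.
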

Let us now recall some Moser-type calculus. See \cite{klainerman1981singular, kato1990liapunov, kato1988commutator}.
\begin{lemma}[Commutator estimates]
Let $\beta$ be a $2$-tuple multi-index of nonnegative integers   such that $\vert \beta \vert \leq k$ holds for  $k\in\{1,2\}$. Let $p,p_2,p_3\in(1,\infty)$ and $p_1,p_4 \in (1,\infty]$ be such that
\begin{align*}
\frac{1}{p}=\frac{1}{p_1}+\frac{1}{p_2}=\frac{1}{p_3}+\frac{1}{p_4}.
\end{align*}
For $u \in W^{k,p_3}(\mathbb{T}^2) \cap W^{1,p_1}(\mathbb{T}^2)$ and $v \in W^{k-1,p_2}(\mathbb{T}^2) \cap L^{p_4} (\mathbb{T}^2)$,  
we have
and
\begin{equation}\label{E6}
\left\| \partial^\beta (uv) - u \partial^\beta v \right\|_{p} \lesssim 
\left( \| \nabla u \|_{p_1} \|  v \|_{k-1,p_2} + \|  u \|_{k,p_3}
\| v \|_{p_4}  \right).
\end{equation}
\end{lemma}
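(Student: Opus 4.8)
The plan is to prove the commutator bound \eqref{E6} by the classical Moser strategy: expand the product derivative with the Leibniz rule, remove the term that is subtracted off, and estimate each surviving summand by H\"older's inequality against the two conjugate exponent splittings supplied in the hypothesis. Writing $\partial^\beta(uv) = \sum_{\gamma \le \beta}\binom{\beta}{\gamma}\,\partial^\gamma u\,\partial^{\beta-\gamma}v$ and subtracting the $\gamma = 0$ contribution $u\,\partial^\beta v$, I obtain
\begin{equation*}
\partial^\beta(uv) - u\,\partial^\beta v = \sum_{0 < \gamma \le \beta}\binom{\beta}{\gamma}\,\partial^\gamma u\,\partial^{\beta-\gamma}v,
\end{equation*}
so that every surviving summand carries at least one derivative on $u$, and hence at most $|\beta| - 1 \le k-1$ derivatives on $v$.

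Next I would classify the summands by $j := |\gamma| \in \{1,\dots,|\beta|\}$. The two terms on the right-hand side of \eqref{E6} correspond precisely to the two extreme values of $j$. The value $j = 1$ (one derivative on $u$, up to $k-1$ on $v$) is matched to $\|\nabla u\|_{p_1}\|v\|_{k-1,p_2}$ using the splitting $\tfrac1p = \tfrac1{p_1}+\tfrac1{p_2}$, via $\|\partial^\gamma u\,\partial^{\beta-\gamma}v\|_p \le \|\partial^\gamma u\|_{p_1}\|\partial^{\beta-\gamma}v\|_{p_2} \le \|\nabla u\|_{p_1}\|v\|_{k-1,p_2}$. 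The value $j = |\beta|$ (all derivatives on $u$, none on $v$) is matched to $\|u\|_{k,p_3}\|v\|_{p_4}$ using $\tfrac1p = \tfrac1{p_3}+\tfrac1{p_4}$, via $\|\partial^\beta u\cdot v\|_p \le \|\partial^\beta u\|_{p_3}\|v\|_{p_4} \le \|u\|_{k,p_3}\|v\|_{p_4}$, where $|\beta|\le k$ is used. The endpoint exponents $p_1=\infty$ or $p_4=\infty$ cause no difficulty since H\"older accommodates them.

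For general $k$ the intermediate indices $1 < j < |\beta|$ would require a Gagliardo--Nirenberg interpolation: one bounds $\|\partial^\gamma u\|_a$ by interpolating between $\|\nabla u\|_{p_1}$ and $\|u\|_{k,p_3}$, and $\|\partial^{\beta-\gamma}v\|_b$ by interpolating between $\|v\|_{k-1,p_2}$ and $\|v\|_{p_4}$, with the H\"older pair $\tfrac1a+\tfrac1b=\tfrac1p$ chosen so the interpolation parameter equals $(j-1)/(|\beta|-1)$; a weighted Young inequality then collapses the product of fractional powers into the sum of the two endpoint products. However, because the hypothesis restricts to $k \in \{1,2\}$, one has $|\beta|\le 2$ and there are no genuine intermediate indices: the only mixed summand is the case $|\beta|=2$, $|\gamma|=|\beta-\gamma|=1$, which is itself an endpoint already handled by the $j=1$ estimate above. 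Thus for the stated range the interpolation step degenerates and H\"older alone suffices.

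The main obstacle is therefore not a single hard estimate but the bookkeeping: confirming that the H\"older exponent pair produced by each summand coincides with one of the two prescribed conjugate splittings, and (in the general-$k$ version) verifying that the Gagliardo--Nirenberg exponent relations in dimension $n=2$ are compatible with $\tfrac1a+\tfrac1b=\tfrac1p$ while the interpolation parameter stays in the admissible range. The companion estimate occupying the preceding line can be obtained the same way, simply by retaining the full Leibniz sum rather than deleting the $\gamma=0$ term.
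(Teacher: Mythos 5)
Your proof is correct. Note, however, that the paper does not prove this lemma at all: it is recalled as known Moser-type calculus with citations to Klainerman--Majda and Kato--Ponce, so there is no in-paper argument to compare against. Your Leibniz-plus-H\"older argument is a complete and elementary proof of the statement in the stated range $k\in\{1,2\}$: after subtracting the $\gamma=0$ term, every summand $\partial^\gamma u\,\partial^{\beta-\gamma}v$ with $0<\gamma\le\beta$ and $|\beta|\le 2$ is an ``endpoint'' term, estimated either by $\|\nabla u\|_{p_1}\|v\|_{k-1,p_2}$ (when $|\gamma|=1$, using $|\beta-\gamma|\le k-1$) or by $\|u\|_{k,p_3}\|v\|_{p_4}$ (when $\gamma=\beta$), with the two prescribed H\"older splittings; the hypotheses on $u$ and $v$ are exactly what these two estimates consume. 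You are also right that the cited references prove the general-$k$ version, where the intermediate summands $1<|\gamma|<|\beta|$ force a Gagliardo--Nirenberg interpolation between the two endpoint norms, and that this machinery degenerates for $|\beta|\le 2$ so that H\"older alone suffices here. What your self-contained argument buys is transparency and minimal hypotheses for the only cases the paper actually uses; what the literature citation buys is the full-strength estimate for arbitrary $k$ without re-deriving the interpolation bookkeeping.
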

\subsection{Main results}
Our current goal  is to construct a solution for the  system of equations \eqref{ce}--\eqref{constrt}. To do this, we first make the following assumption on our set of data.
\begin{assumption}
\label{dataAllThroughout}
Let $\bu_h \in W^{3,2}_{\mathrm{div}}(\mathbb{T}^2;\mathbb{R}^2)$ and $f\in W^{2,2}(\mathbb{T}^2)$ and assume that $(b_0, \omega_0) \in \mathcal{M}$. 
\end{assumption}
Unless otherwise stated, Assumption \ref{dataAllThroughout} now holds throughout the rest of the paper.
We are now in a position to make precise, exactly what we mean by a solution.
\begin{definition}[Local strong solution]\label{def:solution}
Let $T>0$ be a constant. We call the triple $(b, \omega,T) $
a  \textit{local strong solution} or simply, a \textit{local solution} or a \textit{solution} to the system \eqref{ce}--\eqref{constrt} if the following holds.
\begin{itemize}
\item The buoyancy $b$ satisfies $b \in  C([0,T]; W^{3,2}(\mathbb{T}^2))$
and the equation
\begin{align*}
b(t) &= b_0 -  \int_0^{t} \mathrm{div} (b\bu)\,\dd \tau, 
\end{align*}
holds for all $t\in[0,T]$;
\item  the potential vorticity $\omega$ satisfies $\omega \in  C([0,T]; W^{2,2}(\mathbb{T}^2))$ 
and the equation
\begin{align*}
\omega (t)  &= \omega_0 - \int_0^t \Big[ \mathrm{div}  ((\omega-b)\mathbf{u} ) 
+
\mathrm{div} (b\bu_h ) \Big] \,\dd \tau
\end{align*}
holds for all $t\in[0,T]$.
\end{itemize}
\end{definition}
\begin{remark} 
We remark that the  regularity of the solution $(b, \omega, T) $ and its  data together with the integral equations  immediately imply that $b$ and $\omega$ are differentiable in time.
Indeed, by using the fact that $W^{3,2}(\mathbb{T}^2)$ and $W^{2,2}(\mathbb{T}^2)$ are Banach algebras, we immediately deduce from the integral equations for $b$ and $\omega$ above that
\begin{align*}
    b \in  C^1([0,T]; W^{2,2}(\mathbb{T}^2)), \qquad \omega \in  C^1([0,T]; W^{1,2}(\mathbb{T}^2)).
\end{align*}
It also follow from the integral equations for the buoyancy and potential vorticity above that the initial conditions are  $b(0,x)=b_0(x)$ and $\omega(0,x)=\omega_0(x)$. The corresponding differential forms \eqref{ce}--\eqref{me} are  clearly immediate from the integral representations. 
\end{remark}
\begin{remark}
Since we are working on the torus, and the velocity fields are defined by \eqref{constrtVelocity}-\eqref{constrt}, we have in
particular, $\int_{\mathbb{T}^2}\bu_h \dx=0$ and $\int_{\mathbb{T}^2}\bu \dx=0$. From the latter, we get that $q$ and $f$ have zero averages. Consequently, we will assume that all
functions under consideration have zero averages.
\end{remark}
\begin{definition}[Maximal solution]\label{def:MaxSolution}
We call  $(b, \omega,  T_{\max}) $
a  \textit{maximal solution} to the system \eqref{ce}--\eqref{constrt} if:
\begin{itemize}
\item there exists an increasing sequence of time steps $(T_n)_{n\in \mathbb{N}}$ whose limit is $T_{\max}\in (0,\infty]$;
\item  for each $n\in \mathbb{N}$, the triple $(b, \omega, T_n) $ is
a local strong solution to the system \eqref{ce}--\eqref{constrt} with initial condition $(b_0, \omega_0) $;
\item if $T_{\max}<\infty$, then
\begin{align}
\label{limsupSol}
\limsup_{T_n\rightarrow T_{\max}} \Vert (b,\omega)(T_n) \Vert_{\mathcal{M}}^2 =\infty.
\end{align} 
\end{itemize}
We shall call $T_{\max}>0$ the \textit{maximal time}.
\begin{remark}
Condition \eqref{limsupSol} means that the solution breaks down at the limit point $T_{\max}$.
\end{remark}
\end{definition}
We are now in a position to state our first main result.
\begin{theorem}[Existence of local solutions]
\label{thm:main}
There exists  a  solution $(b, \omega, T) $ of \eqref{ce}--\eqref{constrt} under Assumption \ref{dataAllThroughout}.
\end{theorem}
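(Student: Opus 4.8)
The plan is to construct the solution by the classical energy method: build a family of smooth approximate solutions, derive uniform a priori bounds in the $\mathcal{M}$-norm that persist on a short interval $[0,T]$, pass to the limit by compactness, and finally recover the stated time-continuity. Since the system \eqref{ce}--\eqref{constrt} is a pair of active-scalar transport equations coupled through the elliptic constraint, everything hinges on exploiting the divergence-free transport structure together with the commutator calculus of \eqref{E6}.

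\emph{Approximation scheme.} I would regularize the system by Friedrichs mollification of the transport terms (replacing $\bu\cdot\nabla(\cdot)$ by $(\mathcal{J}_\varepsilon\bu)\cdot\nabla\mathcal{J}_\varepsilon(\cdot)$), which turns \eqref{ce}--\eqref{me} into an ODE on the Banach space $\mathcal{M}$ with a locally Lipschitz right-hand side; the Picard--Lindel\"of theorem then yields a unique smooth solution $(b^\varepsilon,\omega^\varepsilon)$ on a short interval. (An iteration scheme that freezes the transporting velocity at the previous step would serve equally well.) At each stage the velocity is reconstructed from $\omega^\varepsilon$ through the elliptic constraint \eqref{constrt} and controlled by the estimate \eqref{lem:MasterEst}, namely $\Vert\bu^\varepsilon\Vert_{k+1,2}\lesssim\Vert\omega^\varepsilon-f\Vert_{k,2}$.

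\emph{Uniform a priori estimates.} This is the core. For the buoyancy I apply $\partial^\beta$ with $\vert\beta\vert\le 3$ to \eqref{ce} and test with $\partial^\beta b$ in $L^2$; since $\bu$ is divergence-free the top-order term $\langle\bu\cdot\nabla\partial^\beta b,\partial^\beta b\rangle$ vanishes, and the commutator $\partial^\beta(\bu\cdot\nabla b)-\bu\cdot\nabla\partial^\beta b$ is bounded, by the Moser-type estimate \eqref{E6} (and its order-three analogue applied with $v=\nabla b$), by $\Vert\nabla\bu\Vert_\infty\Vert b\Vert_{3,2}+\Vert\bu\Vert_{3,2}\Vert\nabla b\Vert_\infty$. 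For the vorticity I apply $\partial^\beta$ with $\vert\beta\vert\le 2$ to \eqref{me} and test with $\partial^\beta\omega$; the transport term is handled identically via \eqref{E6} at $k=2$ with intermediate $L^4$ exponents, while the coupling $\bu\cdot\nabla b$ and the forcing $\bu_{h_1}\cdot\nabla b$ are lower order and are controlled, using that $W^{2,2}(\mathbb{T}^2)$ is a Banach algebra, by $\Vert\omega\Vert_{2,2}\Vert b\Vert_{3,2}$ and the data norms of $f$ and $h_1$. Using the two-dimensional embedding $W^{2,2}(\mathbb{T}^2)\hookrightarrow L^\infty(\mathbb{T}^2)$ to bound $\Vert\nabla\bu\Vert_\infty\lesssim\Vert\omega\Vert_{2,2}$ and $\Vert\nabla b\Vert_\infty\lesssim\Vert b\Vert_{3,2}$, together with \eqref{lem:MasterEst}, all terms close against the $\mathcal{M}$-norm, giving a Riccati-type inequality $\tfrac{d}{dt}\Vert(b^\varepsilon,\omega^\varepsilon)\Vert_{\mathcal{M}}^2\lesssim P\big(\Vert(b^\varepsilon,\omega^\varepsilon)\Vert_{\mathcal{M}}^2\big)$ with $P$ polynomial and constants independent of $\varepsilon$. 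A comparison argument yields a time $T>0$, depending only on $\Vert(b_0,\omega_0)\Vert_{\mathcal{M}}$ and the data, on which $\Vert(b^\varepsilon,\omega^\varepsilon)\Vert_{\mathcal{M}}$ stays uniformly bounded.

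\emph{Passage to the limit, continuity, and the main obstacle.} With uniform bounds in $L^\infty(0,T;\mathcal{M})$ and, from the equations, in $\mathrm{Lip}(0,T;W^{2,2}\times W^{1,2})$, I show that $(b^\varepsilon,\omega^\varepsilon)$ is Cauchy in the weaker space $C([0,T];W^{2,2}\times W^{1,2})$ by estimating the difference of two approximations (this is essentially the continuous-dependence estimate and costs one derivative). Interpolating this strong low-norm convergence against the uniform high-norm bound produces a limit $(b,\omega)$ satisfying the integral equations of Definition~\ref{def:solution}, the divergence-free structure allowing passage to the limit in the nonlinear products. The uniform bound then gives $b\in C_w([0,T];W^{3,2})$ and $\omega\in C_w([0,T];W^{2,2})$, and the standard principle ``weak continuity plus continuity of the norm implies strong continuity'' (the norm continuity coming from the energy identity) upgrades these to the required $C([0,T];W^{3,2})$ and $C([0,T];W^{2,2})$. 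The delicate point throughout is closing the top-order estimates under the derivative coupling between the two equations: the potential-vorticity equation transports the combination $\omega-b$ and is forced by $\nabla b$, so $b$ must carry exactly one more derivative than $\omega$. Verifying that the commutator and product bounds match the available regularity — in particular that the worst term $\Vert\bu\Vert_{3,2}\Vert\nabla b\Vert_\infty$ is genuinely controlled by $\Vert\omega\Vert_{2,2}\Vert b\Vert_{3,2}$ rather than by a norm one degree too high — is what forces the choice $\mathcal{M}=W^{3,2}(\mathbb{T}^2)\times W^{2,2}(\mathbb{T}^2)$ and constitutes the crux of the argument.
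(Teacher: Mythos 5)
Your proposal is correct in substance but organised quite differently from the paper. The paper does not build approximate solutions by hand: it recasts \eqref{ce}--\eqref{constrt} as an abstract evolution equation \eqref{abstractEq} and invokes the Kato--Lai existence theorem (Theorem \ref{thm:appen} in the appendix) for the triplet $\{V,H,X\}$ with $H\equiv\mathcal{M}$, so that the only work is to verify weak continuity of the operator $\mathcal{A}$ and the single coercivity bound $\vert\langle\mathcal{A}(b,\omega),(b,\omega)\rangle_H\vert\lesssim(1+\Vert(b,\omega)\Vert_H^2)^2$; these are exactly the commutator/transport estimates of Section \ref{sec:estConv}, i.e.\ the same estimates you derive, but used as hypotheses of a black-box theorem rather than as uniform bounds on a mollified family. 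What the paper's route buys is economy (no approximation scheme, no compactness or Aubin--Lions-type argument, and an explicit existence time \eqref{requireTimeA} from the scalar ODE \eqref{ode}); what your route buys is self-containedness and a constructive scheme that sits closer to the numerics. Both routes deliver only $C_w([0,T];\mathcal{M})$ at first and must upgrade to strong continuity afterwards, and here is the one place where your sketch is imprecise: there is no ``energy identity'' for the $W^{3,2}\times W^{2,2}$ norms of a transported pair (only $L^p$ norms of $b$ are conserved), so norm continuity cannot come from a conservation law. The correct mechanism --- which the paper uses in Remark \ref{rem:strongContZero} and Lemma \ref{lem:cweakTocstrong}, and which is available to you verbatim --- is to restart the Riccati bound \eqref{weakContBAndOmega1A} at an arbitrary time $T_0$ to get $\limsup_{t\to T_0^+}\Vert(b,\omega)(t)\Vert_{\mathcal{M}}\le\Vert(b,\omega)(T_0)\Vert_{\mathcal{M}}$, combine with weak lower semicontinuity for the reverse inequality, and use uniqueness plus time reversibility to handle left limits. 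With that substitution (and the standard outer mollification $\mathcal{J}_\varepsilon((\mathcal{J}_\varepsilon\bu)\cdot\nabla\mathcal{J}_\varepsilon(\cdot))$, or your frozen-velocity iteration, so that the skew-symmetry cancellation survives regularisation), your argument closes.
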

Once we have constructed a local solution, we can show that this solution is continuously dependent on its initial state in a more general class of function space. This choice of class  appears to be the strongest space in which the analysis may be performed. More details will follow in the sequel but first, we give the statement on the continuity property of strong solutions with respect to its data.
\begin{theorem}[Continuity at low regularity] 
\label{thm:stability}
Let $\bu_h \in W^{3,2}_{\mathrm{div}}(\mathbb{T}^2;\mathbb{R}^2)$ and $f\in W^{2,2}(\mathbb{T}^2)$. Assume that  $(b^1, \omega^1, T^1) $ and  $(b^2,\omega^2, T^2)$ are  solutions of \eqref{ce}--\eqref{constrt}   with initial conditions $(b_0^1, \omega_0^1) \in \mathcal{M}$ and $(b_0^2,\omega_0^2)\in \mathcal{M}$ respectively. Then there exists a constant
\begin{align*}
c=c\big( \Vert b_0^1\Vert_{3,2}, \Vert \omega_0^1\Vert_{2,2}, \Vert b_0^2\Vert_{3,2},  \Vert \omega_0^2 \Vert_{2,2}, \Vert \bu_h\Vert_{3,2}, \Vert f\Vert_{2,2} \big)
\end{align*}
such that
\begin{equation}
\begin{aligned}
 \Vert  (b^1 -b^2)(t) \Vert_{2,2}^2
+
\Vert (\omega^1 - \omega^2 )(t) \Vert_{1,2}^2  
\leq \exp(cT)\big(\Vert  b_0^1 - b_0^2 \Vert_{2,2}^2
+ \Vert \omega_0^1 - \omega_0^2 \Vert_{1,2}^2 
 \big) 
\end{aligned}
\end{equation} 
holds for all $t\in[0,T]$ where $T=\min\{T^1,T^2\}$.
\end{theorem}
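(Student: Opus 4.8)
The plan is to run a Grönwall argument on the difference of the two solutions, measured in the weaker norm $\Vert\cdot\Vert_{2,2}$ for the buoyancy and $\Vert\cdot\Vert_{1,2}$ for the vorticity. Write $b := b^1 - b^2$, $\omega := \omega^1 - \omega^2$, and let $\psi$, $\bu$ denote the corresponding differences of streamfunctions and velocities, so that $\bu = \nabla^\perp\psi$ and $\omega = (\Delta-1)\psi$ (the fixed $f$ cancels in the difference). Subtracting the two copies of \eqref{ce}--\eqref{me} and splitting each quadratic term as $\bu^1\cdot\nabla b^1 - \bu^2\cdot\nabla b^2 = \bu^1\cdot\nabla b + \bu\cdot\nabla b^2$, the differences satisfy
\begin{align*}
\partial_t b + \bu^1\cdot\nabla b &= -\,\bu\cdot\nabla b^2,\\
\partial_t \omega + \bu^1\cdot\nabla(\omega - b) &= -\,\bu\cdot\nabla(\omega^2 - b^2) - \bu_h\cdot\nabla b,
\end{align*}
with initial data $b_0^1 - b_0^2$ and $\omega_0^1 - \omega_0^2$. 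In each equation the transport is by the divergence-free field $\bu^1$, while the differences $\bu$ and $b$ on the right are multiplied against the full-regularity background solution $(b^2,\omega^2)$.

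First I would apply $\partial^\beta$ for $\vert\beta\vert\le 2$ to the buoyancy difference equation, test with $\partial^\beta b$, and sum. The top-order transport contribution $\langle \bu^1\cdot\nabla\partial^\beta b,\partial^\beta b\rangle$ vanishes after integration by parts since $\mathrm{div}\,\bu^1 = 0$, leaving the commutator $\partial^\beta(\bu^1\cdot\nabla b) - \bu^1\cdot\nabla\partial^\beta b$, which I would control in $L^2$ by the Moser estimate \eqref{E6} with $k=2$. Choosing the Hölder exponents so that at most two derivatives ever fall on $b$ and the surplus falls on $\bu^1$, this commutator is bounded by $c(\Vert\bu^1\Vert_{3,2})\Vert b\Vert_{2,2}$, where $\Vert\bu^1\Vert_{3,2}\lesssim\Vert\omega^1\Vert_{2,2}$ via the master estimate \eqref{lem:MasterEst} and the 2D embeddings $W^{2,2}\hookrightarrow L^\infty$ and $W^{3,2}\hookrightarrow W^{2,q}$ for $q<\infty$. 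An analogous computation at level $\vert\beta\vert\le 1$ handles the vorticity difference equation: here the transport commutator reduces to $(\nabla\bu^1)\nabla\omega$, estimated by $\Vert\nabla\bu^1\Vert_\infty\Vert\omega\Vert_{1,2}$, and the lower-order term $\bu^1\cdot\nabla b$ is absorbed using that $W^{2,2}(\mathbb{T}^2)$ is a Banach algebra.

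Next I would bound the coupling and forcing terms. For $\bu\cdot\nabla b^2$ I would use $\Vert\bu\Vert_{2,2}\lesssim\Vert\omega\Vert_{1,2}$ (again \eqref{lem:MasterEst}) together with $b^2\in W^{3,2}$: distributing derivatives, every term places the surplus derivative on $b^2$ (controlled by $\Vert b^2\Vert_{3,2}$) while keeping $\bu\in W^{2,2}\hookrightarrow L^\infty$, giving a bound $c(\Vert b^2\Vert_{3,2})\Vert\omega\Vert_{1,2}\Vert b\Vert_{2,2}$. The terms $\bu\cdot\nabla(\omega^2-b^2)$ and $\bu_h\cdot\nabla b$ are treated the same way using $\omega^2\in W^{2,2}$, $b^2\in W^{3,2}$, and $\bu_h\in W^{3,2}_{\mathrm{div}}$. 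Setting $E(t) := \Vert b(t)\Vert_{2,2}^2 + \Vert\omega(t)\Vert_{1,2}^2$, all contributions assemble into $\frac{d}{dt}E \le c\,E$, where $c$ depends only on $\sup_{[0,T]}\Vert(b^i,\omega^i)\Vert_{\mathcal{M}}$, $i=1,2$. Invoking the a priori bounds from the construction to control these suprema in terms of the initial data, $\Vert\bu_h\Vert_{3,2}$ and $\Vert f\Vert_{2,2}$ yields the stated dependence of $c$, and Grönwall's inequality then gives the claimed estimate on $[0,T]$ with $T=\min\{T^1,T^2\}$.

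The main obstacle — and the reason the estimate is stated only in the weaker norm — is the loss of one derivative in the coupling term $\bu\cdot\nabla b^2$. If one attempted to close the estimate at the full regularity $\mathcal{M} = W^{3,2}\times W^{2,2}$, differentiating this term three times would produce $\bu\cdot\nabla^4 b^2$, which is not in $L^2$ because $b^2$ is only $W^{3,2}$. Working one order below the solution regularity is precisely what lets the extra derivative land on the more regular background solution, so the delicate point is the bookkeeping of Hölder exponents in \eqref{E6} guaranteeing that the differences $b$ and $\omega$ are never asked for more than $\Vert b\Vert_{2,2}$ and $\Vert\omega\Vert_{1,2}$.
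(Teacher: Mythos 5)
Your proof is correct and follows essentially the same route as the paper: an energy/Gr\"onwall argument for the difference equations at one derivative below the existence regularity, using the divergence-free transport structure, the Moser commutator estimate \eqref{E6}, the master bound \eqref{lem:MasterEst}, and the a priori $\mathcal{M}$-bounds on both solutions. The only (immaterial) difference is bookkeeping: you transport by $\bu^1$ and load the difference velocity onto the background $b^2$, whereas the paper transports by $\bu^2$ and puts $\bu^{12}$ against $\nabla b^1$; since the constant is allowed to depend on all four initial norms, the two choices are interchangeable.
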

\begin{remark}
For the avoidance of doubt, we make clear that the final estimate for the differences in Theorem \ref{thm:stability} above is stated in terms of a larger space $W^{2,2}(\mathbb{T}^2) \times W^{1,2}(\mathbb{T}^2)$ with smaller norms than the space of existence $W^{3,2}(\mathbb{T}^2) \times W^{2,2}(\mathbb{T}^2)$. In order to obtain this bound however, in particular, we require  the initial conditions of one of the solution to be bounded in the stronger space of existence, i.e, boundedness of  $\Vert b_0^1\Vert_{3,2}, \Vert \omega_0^1\Vert_{2,2}$ rather than in the weaker space for which we obtain our final stability estimate. Furthermore, we also require the potential vorticity $\omega^2$ (at all time of existence) of the second solution to be also bounded in the stronger space of existence $W^{2,2}(\mathbb{T}^2)$. Explicitly, the only flexibility we have has to do with the second buoyancy $b^2$ for which it appears that we are able to relax to live in the weaker space $W^{2,2}(\mathbb{T}^2)$. Unfortunately however, because of the highly coupled nature of the system of equations under study, needing $\omega^2 \in W^{2,2}(\mathbb{T}^2)$ for all times  automatically requires that $b^2 \in W^{3,2}(\mathbb{T}^2)$ for all times. Subsequently, this means that we require boundedness of the initial condition of the second solution in the stronger space of existence, i.e, boundedness of  $\Vert b_0^2 \Vert_{3,2}, \Vert \omega_0^2\Vert_{2,2}$ in addition to that of the first solution.
 The requirement of needing both pair of initial conditions to live in a stronger space is in contrast to simpler looking models like the Euler equation where it suffices to require just having one initial condition to have stronger regularity. Unfortunately, we can not do better by our method of proof (which is to derive estimates for equations solved by the differences, i.e., the energy method) but we do not claim that other methods for deriving analogous estimates may not yield better result either.
\end{remark}
As a consequence of  Theorem \ref{thm:stability}, the following statement about uniqueness of the strong solution is immediate.
\begin{corollary}[Uniqueness]
\label{cor:Uniqueness}
Let $(b^1,\omega^1, T^1)$ and $(b^2,\omega^2, T^2)$ be  two solutions of \eqref{ce}--\eqref{constrt} under Assumption \ref{dataAllThroughout}. 
Then 
the difference $(b^1-b^2,\omega^1-\omega^2)$ satisfies the equation
\begin{align}
\Vert (b^1-b^2)(t) \Vert_{2,2}^2+ \Vert  (\omega^1-\omega^2)(t)\Vert_{1,2}^2=0
\end{align}
for all $t\in[0,T]$ where $T=\min\{T^1,T^2\}$.
\end{corollary}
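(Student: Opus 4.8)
The plan is to derive this corollary directly from Theorem \ref{thm:stability}, since the desired uniqueness statement is nothing more than the continuity estimate specialised to coincident initial data. First I would observe that, under Assumption \ref{dataAllThroughout}, both triples $(b^1,\omega^1,T^1)$ and $(b^2,\omega^2,T^2)$ are solutions of \eqref{ce}--\eqref{constrt} issued from one and the same initial datum $(b_0,\omega_0)\in\mathcal{M}$, while the background fields $\bu_h\in W^{3,2}_{\mathrm{div}}(\mathbb{T}^2;\mathbb{R}^2)$ and $f\in W^{2,2}(\mathbb{T}^2)$ are precisely those demanded by the hypotheses of Theorem \ref{thm:stability}. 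Hence I may apply that theorem in the special case $b_0^1=b_0^2=b_0$ and $\omega_0^1=\omega_0^2=\omega_0$.

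Next I would read off the conclusion of Theorem \ref{thm:stability} on the common existence interval $[0,T]$ with $T=\min\{T^1,T^2\}$. Because the two initial states coincide, the difference of the data vanishes, so that
\[
\Vert b_0^1-b_0^2\Vert_{2,2}^2+\Vert\omega_0^1-\omega_0^2\Vert_{1,2}^2=0,
\]
and consequently the entire right-hand side $\exp(cT)\big(\Vert b_0^1-b_0^2\Vert_{2,2}^2+\Vert\omega_0^1-\omega_0^2\Vert_{1,2}^2\big)$ of the stability bound equals zero, irrespective of the (finite) value taken by the constant $c$.

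Finally, since the left-hand side $\Vert(b^1-b^2)(t)\Vert_{2,2}^2+\Vert(\omega^1-\omega^2)(t)\Vert_{1,2}^2$ is a sum of squared norms and is therefore non-negative, yet is bounded above by zero for every $t\in[0,T]$, it must vanish identically on $[0,T]$. This yields the asserted equality and, in particular, $b^1\equiv b^2$ and $\omega^1\equiv\omega^2$ throughout $[0,T]$.

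There is essentially no analytical obstacle at this stage: the substantive work — deriving an energy inequality for the system solved by the differences $b^1-b^2$ and $\omega^1-\omega^2$, controlling the nonlinear Jacobian terms via commutator estimates such as \eqref{E6}, and closing a Gr\"onwall argument — has already been carried out in the proof of Theorem \ref{thm:stability}. The only point meriting care is to confirm that the hypotheses of that theorem are genuinely in force, in particular that both $\omega^1$ and $\omega^2$ (and hence $b^1,b^2$) carry the full $\mathcal{M}$-regularity on $[0,T]$ required there; this holds automatically because, under Assumption \ref{dataAllThroughout}, each $(b^i,\omega^i,T^i)$ is by definition a local strong solution and therefore lies in $C([0,T^i];W^{3,2}(\mathbb{T}^2))\times C([0,T^i];W^{2,2}(\mathbb{T}^2))$.
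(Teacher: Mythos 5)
Your proposal is correct and follows essentially the same route as the paper: the authors prove Theorem \ref{thm:stability} and Corollary \ref{cor:Uniqueness} in one combined argument, and obtain the corollary at the very end by observing that $b^{12}_0=0$ and $\omega^{12}_0=0$ annihilate the right-hand side of the Gr\"onwall estimate, exactly as you do. Your additional remark that both solutions carry full $\mathcal{M}$-regularity (so the hypotheses of Theorem \ref{thm:stability} are genuinely in force) is a sensible check and consistent with the paper's setup.
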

\noindent Finally, we can show that a \textit{maximal solution} of \eqref{ce}--\eqref{constrt}, in the sense of Definition \ref{def:MaxSolution}, also exists.
\begin{theorem}[Existence of maximal solution]
\label{thm:MaxSol}
There exist a unique maximal solution $(b, \omega, T_{\max}) $ of \eqref{ce}--\eqref{constrt} under Assumption \ref{dataAllThroughout}.
\end{theorem}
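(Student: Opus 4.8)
The plan is to construct the maximal solution by exhausting the interval of existence: Theorem \ref{thm:main} supplies an initial local solution, the same theorem is used to \emph{restart} the evolution, and Corollary \ref{cor:Uniqueness} guarantees that the successive pieces glue together consistently. The decisive quantitative input, which I would extract from the energy estimates underlying the proof of Theorem \ref{thm:main}, is that the guaranteed existence time depends only on the size of the data in $\mathcal{M}$: there is a nonincreasing function $R \mapsto T_*(R) > 0$ such that every datum $(b_0,\omega_0)$ with $\|(b_0,\omega_0)\|_{\mathcal{M}} \le R$ admits a local solution on $[0,T_*(R)]$. This uniform lower bound on the lifespan is precisely what makes restarting possible and what drives the blow-up alternative.

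First I would set, writing ``solution'' for a local strong solution of the system \eqref{ce}--\eqref{constrt},
\[
T_{\max} := \sup\{\, T > 0 : \text{a solution exists on } [0,T] \,\},
\]
which is positive by Theorem \ref{thm:main}. Given two local solutions on $[0,T_1]$ and $[0,T_2]$ with $T_1 \le T_2 < T_{\max}$, Corollary \ref{cor:Uniqueness} forces them to coincide on $[0,T_1]$; the local solutions are therefore nested and unambiguously define a single pair $(b,\omega)$ on $[0,T_{\max})$. Choosing any increasing sequence $T_n \uparrow T_{\max}$ along which local solutions exist furnishes the sequence of time steps demanded by Definition \ref{def:MaxSolution}, and the regularity $b \in C([0,T_n];W^{3,2}(\mathbb{T}^2))$, $\omega \in C([0,T_n];W^{2,2}(\mathbb{T}^2))$ transfers to the whole interval $[0,T_{\max})$.

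It remains to verify the blow-up criterion \eqref{limsupSol}, which I would prove by contradiction. Suppose $T_{\max}<\infty$ but
\[
\limsup_{t \to T_{\max}^-} \|(b,\omega)(t)\|_{\mathcal{M}}^2 = M < \infty.
\]
Then $\|(b,\omega)(t)\|_{\mathcal{M}} \le \sqrt{M}+1 =: R$ for every $t$ in some left-neighborhood $(T_{\max}-\delta,T_{\max})$. Fix $t^\ast$ in this neighborhood with $T_{\max}-t^\ast < T_*(R)$ and take $(b,\omega)(t^\ast)$ as fresh initial data; Theorem \ref{thm:main}, together with the uniform lifespan $T_*(R)$, yields a solution on $[t^\ast, t^\ast + T_*(R)]$, which by Corollary \ref{cor:Uniqueness} agrees with $(b,\omega)$ on the overlap and hence extends it to a local solution on $[0, t^\ast + T_*(R)]$ with $t^\ast + T_*(R) > T_{\max}$. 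This contradicts the definition of $T_{\max}$, so \eqref{limsupSol} must hold.

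Uniqueness of the maximal solution then follows from Corollary \ref{cor:Uniqueness}: two maximal solutions coincide on their common interval of existence, and were their maximal times different, the one with the smaller time would have finite $\mathcal{M}$-norm up to that time (being equal there to the other solution) and could be extended exactly as above, contradicting its maximality; hence the maximal times agree and the solutions are identical. The step I expect to be the main obstacle is the very first one, namely making rigorous that the lifespan in Theorem \ref{thm:main} is controlled solely, and monotonically, by $\|(b_0,\omega_0)\|_{\mathcal{M}}$. This requires returning to the construction and distilling the energy estimates into a differential inequality of the form $\frac{d}{dt}\|(b,\omega)\|_{\mathcal{M}}^2 \le P\big(\|(b,\omega)\|_{\mathcal{M}}^2\big)$ for a superlinear polynomial $P$, whose associated ODE blow-up time furnishes the lower bound $T_*(R)$; once this is in hand, the gluing, extension and uniqueness arguments are routine.
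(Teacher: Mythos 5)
Your proposal is correct and follows essentially the same route as the paper: iterate the local existence result, glue via uniqueness, and derive the blow-up alternative \eqref{limsupSol} by contradiction from a lower bound on the lifespan that depends only on $\Vert(b_0,\omega_0)\Vert_{\mathcal{M}}$. The one ingredient you flag as the ``main obstacle'' is in fact already explicit in the paper's Proposition \ref{prop:main}, whose bound $T<\tfrac{1}{2c(1+\Vert(b_0,\omega_0)\Vert^2_{\mathcal{M}})}$ with $c=c(\Vert\bu_h\Vert_{3,2},\Vert f\Vert_{2,2})$ supplies exactly the nonincreasing function $T_*(R)$ your argument requires.
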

\noindent\textbf{The $\alpha$-TQG model :}
In the following, we consider a `regularized' version of \eqref{ce}--\eqref{constrt}.  Since our notion of a solution to \eqref{ce}--\eqref{constrt} involves the pair $(b,\omega)$ (and not explicitly in terms  of $\bu$), henceforth, we will use the triple $(b^\alpha,\omega^\alpha, T)$ to denote the corresponding solution to the following  $\alpha$-TQG model for the avoidance of confusion. To be precise, for fixed $\alpha>0$, we will be exploring the pair $(b^\alpha,w^\alpha)$ that solves
\begin{align}
\frac{\partial}{\partial t} b^\alpha + {\bu}^\alpha \cdot \nabla b^\alpha =0,
\label{ceAlpha}
\\
\frac{\partial}{\partial t}\omega^\alpha + {\bu}^\alpha\cdot \nabla( \omega^\alpha-b^\alpha) = -\bu_h \cdot \nabla b^\alpha \label{meAlpha}
\end{align}
but where now,  
\begin{align}
\label{constrtAlpha}
{\bu}^\alpha =\nabla^\perp \psi^\alpha, 
\qquad
\bu_h =\frac{1}{2} \nabla^\perp h,
\qquad
\omega^\alpha=(\Delta -1) (1-\alpha \Delta ) \psi^\alpha +f.
\end{align}
Note that at least formally, this implies that
\begin{align}
\label{utilde}
{\bu}^\alpha= \nabla^\perp (1-\alpha \Delta )^{-1}(\Delta-1)^{-1}(\omega^\alpha -f).
\end{align}
We also note that even though the velocity fields $\bu$ in \eqref{constrt} and ${\bu}^\alpha$ in \eqref{utilde} are different, the coupled equations \eqref{ce}--\eqref{me} and \eqref{ceAlpha}--\eqref{meAlpha} are exactly of the same form. 
In the following, we claim that the exact same result, Theorem \ref{thm:MaxSol} holds for the $\alpha$-TQG model \eqref{ceAlpha}--\eqref{constrtAlpha} introduced above.
\begin{theorem}
\label{thm:mainAlpha}
Fix $\alpha>0$. There exist a unique maximal solution $(b^\alpha,\omega^\alpha, T_{\max}^\alpha)$ of \eqref{ceAlpha}--\eqref{constrtAlpha} under Assumption \ref{dataAllThroughout}.
\end{theorem}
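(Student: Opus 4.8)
The plan is to exploit the structural identity, emphasised in the paragraph preceding the statement, between the TQG system \eqref{ce}--\eqref{me} and the $\alpha$-TQG system \eqref{ceAlpha}--\eqref{meAlpha}: the two sets of transport and vorticity equations are \emph{formally the same}, and differ only through the elliptic relation linking the potential vorticity to the velocity field. For TQG this relation is $\omega=(\Delta-1)\psi+f$ with $\bu=\nabla^\perp\psi$, whereas for $\alpha$-TQG it is $\omega^\alpha=(\Delta-1)(1-\alpha\Delta)\psi^\alpha+f$ with $\bu^\alpha=\nabla^\perp\psi^\alpha$. Since $\alpha>0$ is fixed throughout, I would treat the entire argument leading to Theorem \ref{thm:MaxSol} as a black box and verify that its single dependence on the elliptic operator, namely the ``master estimate'' \eqref{lem:MasterEst}, survives under the new constraint. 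Everything else in the construction---the Moser/commutator bounds \eqref{E6}, the energy estimates obtained by differentiating and testing, the difference estimate behind Theorem \ref{thm:stability} and Corollary \ref{cor:Uniqueness}, and the continuation argument producing the maximal time---uses only products of functions together with the regularity gain from vorticity to velocity, so none of it sees the precise form of the elliptic operator.

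The key step, then, is the $\alpha$-analogue of the master estimate. Working in Fourier variables on $\mathbb{T}^2$, the operator $(\Delta-1)(1-\alpha\Delta)$ has symbol $-(1+\vert\xi\vert^2)(1+\alpha\vert\xi\vert^2)$, which is bounded away from zero for every $\xi\in\mathbb{Z}^2$ (including $\xi=0$, where it equals $-1$); hence it is boundedly invertible on each $H^s(\mathbb{T}^2)$ with no zero-mode obstruction. From \eqref{utilde} the velocity is $\bu^\alpha=\nabla^\perp(1-\alpha\Delta)^{-1}(\Delta-1)^{-1}(\omega^\alpha-f)$, whose Fourier multiplier decays like $\vert\xi\vert^{-3}$, so that for each admissible $k$
\begin{equation*}
\Vert \bu^\alpha \Vert_{k+1,2}^2 \;\lesssim_\alpha\; \Vert \bu^\alpha \Vert_{k+3,2}^2 \;\lesssim_\alpha\; \Vert \omega^\alpha - f \Vert_{k,2}^2 \;\lesssim_\alpha\; \Vert \omega^\alpha \Vert_{k,2}^2 + \Vert f \Vert_{k,2}^2 .
\end{equation*}
In particular $\bu^\alpha$ enjoys \emph{at least} the one-derivative gain used in \eqref{lem:MasterEst}, in fact two more, and the Biot--Savart-type bound holds with an ($\alpha$-dependent but finite) constant. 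Because the constant in this inequality is permitted to depend on $\alpha$ in Theorem \ref{thm:mainAlpha}, this is all that is required; the $f$-contribution is controlled exactly as in the TQG case using $f\in W^{2,2}(\mathbb{T}^2)$ from Assumption \ref{dataAllThroughout}.

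With the master estimate in hand, I would replay the proofs of Theorems \ref{thm:main} and \ref{thm:MaxSol} verbatim, with $(b,\omega,\bu)$ replaced by $(b^\alpha,\omega^\alpha,\bu^\alpha)$: construct a local solution in $\mathcal{M}$ by the energy method, upgrade it to a unique solution via the continuity estimate of Theorem \ref{thm:stability} and Corollary \ref{cor:Uniqueness} (which again needs only the master estimate and the commutator bounds, now applied to the differences $b^1-b^2$ and $\omega^1-\omega^2$), and patch together local solutions to obtain a unique maximal solution $(b^\alpha,\omega^\alpha,T^\alpha_{\max})$ with the blow-up alternative \eqref{limsupSol} in the $\mathcal{M}$-norm.

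The main obstacle is conceptual rather than technical: one must check that the extra smoothing of $\bu^\alpha$ does not inadvertently change which norms close the energy estimates, and in particular that no estimate secretly required the \emph{sharp} one-derivative gain of the TQG Biot--Savart law. It does not---the construction only ever invokes an upper bound of the form $\Vert \bu \Vert_{k+1,2}\lesssim\Vert\omega\Vert_{k,2}$, so having more regularity available is harmless and the solution is still sought at the same level $\mathcal{M}=W^{3,2}(\mathbb{T}^2)\times W^{2,2}(\mathbb{T}^2)$. The only point demanding genuine care is the bookkeeping of the $\alpha$-dependence of the constants; since $\alpha$ is fixed here this is immaterial, and the question of \emph{uniform} control as $\alpha\to0$ is precisely what is deferred to the convergence analysis of Section \ref{sec:converge}.
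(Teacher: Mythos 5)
Your proposal is correct and follows essentially the same route as the paper: the authors likewise observe (Remark \ref{remark1} and the proof of Lemma \ref{lem:mainAlpha}) that the $\alpha$-TQG transport equations are formally identical to those of TQG and that the only ingredient which sees the elliptic operator is the master estimate \eqref{lem:MasterEst}, which survives for $(\Delta-1)(1-\alpha\Delta)$, so Proposition \ref{prop:main}, Lemma \ref{lem:cweakTocstrong}, Corollary \ref{cor:Uniqueness} and the gluing argument of Section \ref{subset:maximal} apply verbatim. The only cosmetic difference is that the paper implicitly uses the $\alpha$-uniform bound $\Vert \bu^\alpha\Vert_{k+1,2}\lesssim \Vert \omega^\alpha-f\Vert_{k,2}$ (the symbol $(1+\alpha\vert\xi\vert^2)^{-1}\leq 1$), which is what makes the local existence time independent of $\alpha$, whereas your chain through $\Vert \bu^\alpha\Vert_{k+3,2}$ carries an $\alpha$-dependent constant---harmless for fixed $\alpha$, as you note.
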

\begin{remark}
\label{remark1} 
The fact that Theorem \ref{thm:mainAlpha} holds true is hardly surprising considering that we are basically treating the exact same model as \eqref{ce}--\eqref{me} albeit  different velocity fields. 
Heuristically, one observes that for very small $\alpha$, $\psi^\alpha \sim \tilde{\psi} \sim \psi$ and thus, both models are the same. Nevertheless, rigorously, the key lies in the estimate  \eqref{lem:MasterEst} which still holds true when one replaces $\bu$ with ${\bu}^\alpha$ and where now, the triple $({\bu}^\alpha,\omega^\alpha,f)$ satisfies
\begin{align}
\label{constrtxx}
{\bu}^\alpha =\nabla^\perp \psi^\alpha, 
\qquad
(\omega^\alpha -f)=(\Delta -1)(1-\alpha \Delta ) \psi^\alpha.
\end{align}
for a given $f$ having sufficient regularity.
\end{remark}
Indeed, Theorem \ref{thm:mainAlpha} follow from the following lemma.
\begin{lemma}
\label{lem:mainAlpha}
Under Assumption \ref{dataAllThroughout},  there exists a unique solution $(b^\alpha,\omega^\alpha, T^\alpha)$ of \eqref{ceAlpha}--\eqref{constrtAlpha} for any $\alpha>0$. Moreover,  
$(b^\alpha,\omega^\alpha, T_{\max}^\alpha)$ is a unique maximal solution of \eqref{ceAlpha}--\eqref{constrtAlpha}.
\end{lemma}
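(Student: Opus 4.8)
The plan is to exploit the structural identity between the $\alpha$-TQG system \eqref{ceAlpha}--\eqref{meAlpha} and the TQG system \eqref{ce}--\eqref{me} already noted in Remark \ref{remark1}, and thereby reduce the statement to the existence and uniqueness theory developed for TQG in Theorem \ref{thm:main}, Corollary \ref{cor:Uniqueness} and Theorem \ref{thm:MaxSol}. Written out, the buoyancy and vorticity equations are \emph{identical} transport-type equations once the velocity $\bu$ is renamed ${\bu}^\alpha$; the only difference between the models is the elliptic reconstruction of the velocity from the vorticity, which for TQG is $\bu = \nabla^\perp(\Delta-1)^{-1}(\omega - f)$ and for $\alpha$-TQG is \eqref{utilde}. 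Since the entire construction in Section \ref{sec:construct} interacts with the velocity field only through (i) its divergence-free character and (ii) the master estimate \eqref{lem:MasterEst}, the whole argument will transfer verbatim provided these two properties survive the replacement $\bu \mapsto {\bu}^\alpha$. Property (i) is automatic from ${\bu}^\alpha = \nabla^\perp \psi^\alpha$, so the single point that must be checked is the master estimate.

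Accordingly, the first substantive step is to verify that \eqref{lem:MasterEst} continues to hold for the triple $({\bu}^\alpha, \omega^\alpha, f)$ satisfying \eqref{constrtxx}, i.e. that $\Vert {\bu}^\alpha \Vert_{k+1,2}^2 \lesssim \Vert \omega^\alpha - f \Vert_{k,2}^2$. This is a one-line computation in frequency space: the symbol of $(\Delta-1)(1-\alpha\Delta)$ at $\xi \in \mathbb{Z}^2$ is $-(1+|\xi|^2)(1+\alpha|\xi|^2)$, which is bounded away from zero, so $\psi^\alpha$ is well defined and
\begin{equation*}
\Vert {\bu}^\alpha \Vert_{k+1,2}^2 = \sum_{\xi\in\mathbb{Z}^2} \frac{|\xi|^2\,(1+|\xi|^2)^{k+1}}{(1+|\xi|^2)^2(1+\alpha|\xi|^2)^2}\,\big|\widehat{(\omega^\alpha-f)}(\xi)\big|^2 \leq \sum_{\xi\in\mathbb{Z}^2} (1+|\xi|^2)^k \big|\widehat{(\omega^\alpha-f)}(\xi)\big|^2,
\end{equation*}
where the inequality uses $|\xi|^2 \le (1+|\xi|^2)(1+\alpha|\xi|^2)^2$. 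In fact the extra factor $(1+\alpha|\xi|^2)^{-2}$ only improves the decay, so the estimate holds with a constant independent of $\alpha$ (a uniformity that will matter in Section \ref{sec:converge}, though here only the existence of \emph{some} bound for fixed $\alpha$ is needed). Since $f$ is fixed data with $f \in W^{2,2}(\mathbb{T}^2)$, this yields $\Vert {\bu}^\alpha \Vert_{k+1,2} \lesssim \Vert \omega^\alpha \Vert_{k,2} + \Vert f \Vert_{k,2}$ for $k \le 2$, exactly the form used in the energy estimates.

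With the master estimate in hand, the remainder follows by repeating the proofs of the TQG results with $\bu$ replaced by ${\bu}^\alpha$. Differentiating \eqref{ceAlpha}--\eqref{meAlpha} in space, testing with the corresponding derivatives, and invoking the commutator estimate \eqref{E6} together with the Banach-algebra property underlying $\mathcal{M}$ produces a closed a priori bound of the same shape as for TQG; the transport (divergence-free) structure of the ${\bu}^\alpha \cdot \nabla(\cdot)$ terms supplies the same cancellations, so the construction of a local solution $(b^\alpha, \omega^\alpha, T^\alpha)$ goes through as in Theorem \ref{thm:main}. Applying the stability argument of Theorem \ref{thm:stability} to the difference of two $\alpha$-solutions --- using the linearity of \eqref{utilde}, so that the master estimate controls ${\bu}^{\alpha,1}-{\bu}^{\alpha,2}$ through $\omega^{\alpha,1}-\omega^{\alpha,2}$ --- gives uniqueness as in Corollary \ref{cor:Uniqueness}, and the continuation/blow-up dichotomy of Theorem \ref{thm:MaxSol} then promotes this to a unique maximal solution $(b^\alpha, \omega^\alpha, T^\alpha_{\max})$. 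I expect no genuine obstacle here: the content of the lemma is precisely the observation that the $\alpha$-regularization enters only through a strictly more smoothing elliptic operator, so the sole thing requiring verification is the master estimate above, and that verification is elementary.
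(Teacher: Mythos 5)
Your proposal is correct and follows essentially the same route as the paper: the paper's proof of this lemma likewise reduces everything to Proposition \ref{prop:main}, Lemma \ref{lem:cweakTocstrong}, Corollary \ref{cor:Uniqueness} and the gluing argument of Section \ref{subset:maximal}, with the sole substantive point being that the master estimate \eqref{lem:MasterEst} survives the replacement $\bu \mapsto \bu^\alpha$ (as asserted in Remark \ref{remark1}). Your explicit frequency-space verification of that estimate, including the observation that the constant is uniform in $\alpha$, is correct and in fact slightly more detailed than what the paper records.
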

\begin{proof}
The existence of a unique solution $(b^\alpha,\omega^\alpha, T^\alpha)$ of \eqref{ceAlpha}--\eqref{constrtAlpha} for any $\alpha>0$ follow from Proposition \ref{prop:main}, Lemma \ref{lem:cweakTocstrong} and Corollary \ref{cor:Uniqueness}
with $T^\alpha = T$. We will prove all these results later for the TQG and one will observe that nothing changes for the $\alpha$-TQG. Note that since \eqref{ce}--\eqref{constrt} and \eqref{ceAlpha}--\eqref{constrtAlpha} share the same data, Assumption \ref{dataAllThroughout}, the local existence time $T$ is accordingly independent of $\alpha>0$.
\\
The construction of the maximal solution $(b^\alpha,\omega^\alpha, T_{\max}^\alpha)$   of \eqref{ceAlpha}--\eqref{constrtAlpha} also follow the same gluing argument used in showing the existence of the maximal solution $(b,\omega, T_{\max})$ for \eqref{ce}--\eqref{constrt} in Section \ref{subset:maximal}.
\end{proof}

\section{Construction of strong solutions}
\label{sec:construct}
In this section, we give a construction of a solution, in the sense of Definition \ref{def:solution}, to the thermal quasi-geostrophic system of equations \eqref{ce}--\eqref{constrt}. We will achieve this goal by rewriting our set of equations \eqref{ce}--\eqref{constrt} in an abstract form  and show that the resulting operator acting on the pair $(b,\omega)$ satisfies the assumptions of Theorem \ref{thm:appen} in the appendix. The details are as follows. 
We consider
\begin{align}
\label{abstractEq}
\frac{\partial}{\partial t} \binom{b}{\omega} 
+
\mathcal{A}
\begin{pmatrix}
 b \\
 \omega
 \end{pmatrix}
 =0,
\qquad
t\geq0
 ,
\qquad
b
\big\vert_{t=0}
=
b_0
 ,
\qquad
\omega
\big\vert_{t=0}
=
\omega_0
\end{align}
where the operator
\begin{align}
\mathcal{A}
:=
\begin{bmatrix}
\bu\cdot\nabla & 0    \\[0.3em]
(\bu_h-\bu)\cdot\nabla &\bu\cdot \nabla
\end{bmatrix}
\end{align}
is defined such that
\begin{align}
\bu =\nabla^\perp \psi, 
\qquad
\bu_h =\frac{1}{2} \nabla^\perp h,
\qquad
\omega=(\Delta -1) \psi +f.
\end{align}
\subsection{Estimates for convective terms}
\label{sec:estConv}
In the following, we let $k\in \mathbb{N}\cup\{0\}$ and recall that $W^{k,p}(\mathbb{T}^2)$ is a Banach algebra when $kp>2$. By using Sobolev embeddings and \eqref{lem:MasterEst}, we obtain the following estimates.
\begin{enumerate}
\item[(1b)] Let $k\in \{0,1,2\}$.  If $b\in W^{k+1,2}(\mathbb{T}^2)$, $\omega\in W^{2,2}(\mathbb{T}^2)$ and $\bu\in W^{3,2}_{\mathrm{div}}(\mathbb{T}^2;\mathbb{R}^2)$ solves \eqref{constrt} for a given $f\in W^{2,2}(\mathbb{T}^2)$ and a given $\bu_h\in W^{3,2}_{\mathrm{div}}(\mathbb{T}^2;\mathbb{R}^2)$, then 
\begin{align*}
&\Vert \bu\cdot\nabla b\Vert_{k,2} 
\lesssim
\Vert \bu\Vert_{2,2}\Vert b \Vert_{k+1,2}
\lesssim
\big(1+\Vert \omega\Vert_{1,2} \big)\Vert b \Vert_{k+1,2}
\end{align*}
and
\begin{align*}
&\Big\vert \big\langle\bu\cdot\nabla b\,,\, b \big\rangle_{k,2} \Big\vert
\lesssim
\Vert \bu\Vert_{3,2}\Vert b \Vert_{k,2}^2
\lesssim
\big(1+\Vert \omega\Vert_{2,2} \big)\Vert b \Vert_{k,2}^2,
\\
&\Big\vert \big\langle\bu\cdot\nabla b\,,\, \omega \big\rangle_{k,2} \Big\vert
\lesssim
\Vert \bu\Vert_{2,2}\Vert b \Vert_{k+1,2}\Vert \omega\Vert_{k,2}
\lesssim
\big(1+\Vert \omega\Vert_{1,2} \big)\Vert b \Vert_{k+1,2}\Vert \omega\Vert_{k,2},
\\
&\Big\vert \big\langle\bu _h\cdot\nabla b\,,\, \omega \big\rangle_{k,2} \Big\vert
\lesssim
\Vert \bu_h\Vert_{2,2}\Vert b \Vert_{k+1,2}\Vert \omega\Vert_{k,2}
\lesssim
\Vert b \Vert_{k+1,2}\Vert \omega\Vert_{k,2}.
\end{align*}
\item[(1b)] Let $k\geq 3$.  If $b\in W^{k+1,2}(\mathbb{T}^2)$, $\omega\in W^{k,2}(\mathbb{T}^2)$ and $\bu\in W^{k+1,2}_{\mathrm{div}}(\mathbb{T}^2;\mathbb{R}^2)$ solves \eqref{constrt} for a given $f\in W^{k,2}(\mathbb{T}^2)$ and a given $\bu_h\in W^{k+1,2}_{\mathrm{div}}(\mathbb{T}^2;\mathbb{R}^2)$, then 
\begin{align*}
&\Vert \bu\cdot\nabla b\Vert_{k,2} 
\lesssim
\Vert \bu\Vert_{2,2}\Vert b \Vert_{k+1,2}
+\Vert \bu\Vert_{k,2}\Vert b \Vert_{3,2}
\lesssim
\big(1+\Vert \omega\Vert_{k-1,2} \big)\Vert b \Vert_{k+1,2}
\end{align*}
and
\begin{align*}
&\Big\vert \big\langle\bu\cdot\nabla b\,,\, b \big\rangle_{k,2} \Big\vert
\lesssim
\Vert \bu\Vert_{3,2}\Vert b \Vert_{k,2}^2
+
\Vert \bu\Vert_{k,2}\Vert b \Vert_{k,2}\Vert b \Vert_{3,2}
\lesssim
\big(1+\Vert \omega\Vert_{k-1,2} \big)\Vert b \Vert_{k,2}^2,
\\
&\Big\vert \big\langle\bu\cdot\nabla b\,,\, \omega \big\rangle_{k,2} \Big\vert
\lesssim
\big(\Vert \bu\Vert_{2,2}\Vert b \Vert_{k+1,2}
+
\Vert \bu\Vert_{k,2}\Vert b \Vert_{3,2}\big) \Vert \omega \Vert_{k,2}
\lesssim
\big(1+\Vert \omega\Vert_{k-1,2} \big)\Vert b \Vert_{k+1,2} \Vert \omega \Vert_{k,2},
\\
&\Big\vert \big\langle\bu_h\cdot\nabla b\,,\, \omega \big\rangle_{k,2} \Big\vert
\lesssim
\big(\Vert \bu_h\Vert_{2,2}\Vert b \Vert_{k+1,2}
+
\Vert \bu_h\Vert_{k,2}\Vert b \Vert_{3,2}\big) \Vert \omega \Vert_{k,2}
\lesssim
\Vert b \Vert_{k+1,2} \Vert \omega \Vert_{k,2}.
\end{align*}
\item[(2a)] Let $k\in\{0,1,2\}$. If $\omega\in W^{k+1,2}(\mathbb{T}^2)$ and that $\bu\in W^{3,2}_{\mathrm{div}}(\mathbb{T}^2;\mathbb{R}^2)$ solves \eqref{constrt} for a given $f\in W^{2,2}(\mathbb{T}^2)$, then
\begin{align*}
\Vert \bu\cdot\nabla \omega\Vert_{k,2}
\lesssim
\Vert \bu\Vert_{2,2}\Vert \omega \Vert_{k+1,2}
\lesssim
\big(1+\Vert \omega\Vert_{1,2} \big)\Vert \omega \Vert_{k+1,2}
\end{align*}
and
\begin{align*}
&\Big\vert \big\langle\bu\cdot\nabla \omega\,,\, \omega \big\rangle_{k,2} \Big\vert
\lesssim
\Vert \bu\Vert_{3,2}\Vert \omega \Vert_{k,2}^2
\lesssim
\big(1+\Vert \omega\Vert_{2,2} \big)\Vert \omega \Vert_{k,2}^2.
\end{align*}
\item[(2b)] Let $k\geq 3$.  If $\omega\in W^{k+1,2}(\mathbb{T}^2)$ and $\bu\in W^{k+1,2}_{\mathrm{div}}(\mathbb{T}^2;\mathbb{R}^2)$ solves \eqref{constrt} for a given $f\in W^{k,2}(\mathbb{T}^2)$, then
\begin{align*}
&\Vert \bu\cdot\nabla \omega\Vert_{k,2} 
\lesssim
\Vert \bu\Vert_{2,2}\Vert \omega \Vert_{k+1,2}
+\Vert \bu\Vert_{k,2}\Vert \omega \Vert_{3,2}
\lesssim
\big(1+\Vert \omega\Vert_{k-1,2} \big)\Vert \omega \Vert_{k+1,2}
\end{align*}
and
\begin{align*}
&\Big\vert \big\langle\bu\cdot\nabla \omega\,,\, \omega \big\rangle_{k,2} \Big\vert
\lesssim
\Vert \bu\Vert_{3,2}\Vert \omega \Vert_{k,2}^2
+
\Vert \bu\Vert_{k,2}\Vert \omega \Vert_{k,2}\Vert \omega \Vert_{3,2}
\lesssim
\big(1+\Vert \omega\Vert_{k-1,2} \big)\Vert \omega \Vert_{k,2}^2.
\end{align*}
\end{enumerate}
With these estimates in hand, we can now proceed to prove the existence of a local strong solution of \eqref{abstractEq}.
\subsection{Proof of local existence}
Our proof of a local strong solution to \eqref{ce}--\eqref{constrt} will follow from the following proposition. Its proof will follow the argument presented in \cite{kato1984nonlinear} (See Appendix, Section \ref{sec:appendix}) for the construction of a local solution to the Euler equation.
\begin{proposition}
\label{prop:main}
Take Assumption \ref{dataAllThroughout}.
There exists   a  solution $(b, \omega, T) $ of \eqref{abstractEq} such that:
\begin{enumerate}
\item the time $T>0$ satisfy the bound
\begin{align}
\label{requireTimeA}
T<\frac{1}{2c(1+\Vert(b_0,\omega_0)\Vert^2_{\mathcal{M}})}, \quad c=c(\Vert\bu_h\Vert_{3,2},\Vert f \Vert_{2,2});
\end{align}
\item the pair $(b,\omega)$ is of class
\begin{align}
\label{weakContBAndOmegaA}
b \in C_w([0,T];W^{3,2}(\mathbb{T}^2)),
\qquad
\omega \in C_w([0,T];W^{2,2}(\mathbb{T}^2)) ;
\end{align}
\item the pair $(b,\omega)$ satisfies the bound
\begin{align}
\label{weakContBAndOmega1A}
\Vert (b, \omega)(t) \Vert_\mathcal{M}^2 
\leq
\frac{\Vert(b_0,\omega_0)\Vert^2_{\mathcal{M}}+2c(1+\Vert(b_0,\omega_0)\Vert^2_{\mathcal{M}})\,t}{1-2c(1+\Vert(b_0,\omega_0)\Vert^2_{\mathcal{M}})\,t},\quad\quad t\in (0,T).
\end{align}
\end{enumerate}
\end{proposition}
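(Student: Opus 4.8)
The plan is to mirror the construction of Kato and Lai for the incompressible Euler equations: cast \eqref{ce}--\eqref{constrt} in the abstract quasilinear form \eqref{abstractEq}, build a family of globally defined regularized approximations, derive a uniform-in-regularization a priori bound in $\mathcal{M}$, and then extract a solution by weak compactness. First I would regularize the advection operator $\mathcal{A}$ — say by inserting a Friedrichs mollifier $J_\varepsilon$ into the advecting velocity, or by projecting onto the first $N$ Fourier modes — so that the approximate system becomes a locally Lipschitz ODE on a ball of $\mathcal{M}$ and hence has a unique local-in-time solution $(b_\varepsilon,\omega_\varepsilon)$ by Cauchy--Lipschitz. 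Verifying that $\mathcal{A}$ fits the hypotheses of the abstract Theorem \ref{thm:appen} reduces precisely to the quantitative bounds collected in Section \ref{sec:estConv}, so the whole weight of the proof falls on a single a priori estimate.

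That estimate is the core of the argument. Writing $Y:=\Vert(b,\omega)\Vert_{\mathcal{M}}^2$, I apply $\partial^\beta$ with $|\beta|\le 3$ to the buoyancy equation \eqref{ce} and pair with $\partial^\beta b$, and $\partial^\beta$ with $|\beta|\le 2$ to the vorticity equation \eqref{me} and pair with $\partial^\beta\omega$. In every transport term I split off the top derivative, $\partial^\beta(\bu\cdot\nabla\,\cdot)=\bu\cdot\nabla\partial^\beta(\cdot)+[\partial^\beta,\bu\cdot\nabla](\cdot)$; the leading piece integrates to zero because $\mathrm{div}\,\bu=0$, and the commutator is handled by the Moser-type bound \eqref{E6} together with \eqref{lem:MasterEst} to trade $\Vert\bu\Vert$ for $\Vert\omega\Vert$. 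The three inner products in the vorticity estimate are exactly items (2a) and (1b) of Section \ref{sec:estConv} at $k=2$, controlled by $(1+\Vert\omega\Vert_{2,2})\Vert\omega\Vert_{2,2}^2$ and by $\Vert b\Vert_{3,2}\Vert\omega\Vert_{2,2}\,(1+\Vert\omega\Vert_{1,2})$; crucially the source term $\bu\cdot\nabla b$ costs $\Vert b\Vert_{3,2}$, which is exactly the regularity available for $b$. After applying Young's inequality and $\Vert\omega\Vert_{2,2}\le\sqrt{Y}\le 1+Y$, each term is dominated by $c(1+Y)^2$ with $c=c(\Vert\bu_h\Vert_{3,2},\Vert f\Vert_{2,2})$, giving the Riccati inequality
\begin{align}
\frac{\mathrm{d}}{\mathrm{d}t}Y\le 2c\,(1+Y)^2 .
\end{align}
Integrating with $Y(0)=\Vert(b_0,\omega_0)\Vert_{\mathcal{M}}^2$ produces exactly the bound \eqref{weakContBAndOmega1A}, which is finite precisely while the denominator $1-2c(1+\Vert(b_0,\omega_0)\Vert_{\mathcal{M}}^2)\,t$ is positive, i.e. on the interval \eqref{requireTimeA}.

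Granting the uniform bound $\sup_{[0,T]}Y_\varepsilon\le C$, I would then read off from the equations a uniform bound on $\partial_t(b_\varepsilon,\omega_\varepsilon)$ in the weaker space $W^{2,2}\times W^{1,2}$ (the transport terms lose one derivative) and apply an Aubin--Lions--Simon argument: the approximations are bounded in $L^\infty([0,T];\mathcal{M})$ and equicontinuous into a weaker norm, so a subsequence converges weakly-$\ast$ in $L^\infty([0,T];\mathcal{M})$ and strongly in $C([0,T];W^{2,2}\times W^{1,2})$. The strong convergence suffices to pass to the limit in the mollified nonlinear terms and to recover the integral form of \eqref{abstractEq}, while weak lower semicontinuity of the $\mathcal{M}$-norm transfers \eqref{weakContBAndOmega1A} to the limit; this gives conclusion (3) and the time restriction (1). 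The weak-in-time continuity \eqref{weakContBAndOmegaA} then follows from the standard fact that a function in $L^\infty([0,T];\mathcal{M})$ that is strongly continuous into a weaker, densely embedded space automatically lies in $C_w([0,T];\mathcal{M})$.

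The main obstacle is closing the estimate in the mismatched space $W^{3,2}\times W^{2,2}$. Two features must cooperate. First, the top-order buoyancy term $\langle\bu\cdot\nabla b,b\rangle_{3,2}$ has no entry in the pre-packaged list, and a naive bound would cost $\Vert b\Vert_{4,2}$; it must instead be treated by hand, using the divergence-free cancellation to remove the leading transport and a Kato--Ponce commutator bound in the spirit of \eqref{E6} to control the remainder by $(1+\Vert\omega\Vert_{2,2})\Vert b\Vert_{3,2}^2$. Second, the asymmetry is forced by the coupling: the source $\bu\cdot\nabla b$ on the right of \eqref{me} demands $b$ one derivative smoother than $\omega$, which is exactly why $\mathcal{M}=W^{3,2}\times W^{2,2}$ rather than an equal-order product is the correct energy space, and one must check that the cross term $\langle\bu\cdot\nabla b,\omega\rangle_{2,2}$ is absorbed using $\Vert b\Vert_{3,2}$ and not $\Vert b\Vert_{2,2}$. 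A final, more routine point is that this construction yields only weak temporal continuity; upgrading $C_w$ to $C$ and establishing uniqueness are deferred to Lemma \ref{lem:cweakTocstrong} and Corollary \ref{cor:Uniqueness}.
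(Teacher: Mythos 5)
Your proposal follows essentially the same route as the paper: the proof reduces to verifying the hypotheses of the abstract Kato--Lai theorem (Theorem \ref{thm:appen}) via the convective estimates of Section \ref{sec:estConv}, with the decisive input being the bound $\vert\langle \mathcal{A}(b,\omega),(b,\omega)\rangle_H\vert\lesssim (1+\Vert(b,\omega)\Vert_H^2)^2$, whose associated Riccati ODE integrates to exactly \eqref{weakContBAndOmega1A} and \eqref{requireTimeA}. The only difference is that you also sketch the internals of the abstract theorem (mollification and compactness), which the paper simply cites to \cite{kato1984nonlinear}; your identification of the delicate points (the $\langle\bu\cdot\nabla b,b\rangle_{3,2}$ term, handled in the paper by \eqref{convEst2}, and the asymmetry of $\mathcal{M}$) matches the paper's treatment.
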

\begin{proof}
Let us consider the following triplet $\{V,H,X\}$ given as follows. Take $X:=W^{1,2}(\mathbb{T}^2)\times L^2(\mathbb{T}^2)$
and let $H:=W^{(3,2),2}(\mathbb{T}^2)\times W^{(2,1),2}(\mathbb{T}^2)$ be a Hilbert space endowed with the inner product
\begin{equation}
\begin{aligned}
\big\langle (b,\omega),
(b',\omega') \big\rangle_H
=
 \langle b\,,\,b' \rangle_{3,2}
 +
  \langle b\,,\,b' \rangle_{2,2}
+
 \langle \omega\,,\,\omega' \rangle_{2,2}
 +
  \langle \omega\,,\,\omega' \rangle_{1,2}.
\end{aligned}
\end{equation}
We note that for $(b,\omega) \in \mathcal{M}$, we have that
\begin{equation}
\begin{aligned}
\label{equiNorm}
\Vert (b,\omega)\Vert_{ \mathcal{M}}
\leq
\Vert(b,\omega)\Vert_H
\leq
2
\Vert (b,\omega)\Vert_{ \mathcal{M}} 
\end{aligned}
\end{equation}
and thus, $H$ is equivalent to $  \mathcal{M}$. Also, by virtue of the estimates shown in Section \ref{sec:estConv}, we can conclude that the operator $\mathcal{A}$ is weakly continuous from  $H$ into $X$.
\\
Next, we let $V$ be the domain of an unbounded selfadjoint operator $S\geq 0$ in $X$ with $\mathrm{domain}(S)\subset H$ and $\langle S u,v\rangle =\langle u,v \rangle_H$ for $u\in \mathrm{domain}(S)$. More precisely, 
$
S=
\sum_{\vert \beta \vert\leq 3}(-\partial)^\beta\partial^\beta
$
subject to periodic boundary conditions. With this definition of $V$ in hand, we can again conclude from the estimates  in Section \ref{sec:estConv} that $\mathcal{A}$ maps  $V$ into $H$.
Our goal now is to show that for any $(b,\omega)\in V$, there exists an increasing function $\rho(r)\geq 0$ of $r\geq0$ such that
\begin{equation}
\begin{aligned}
\Big\vert \big\langle \mathcal{A}(b,\omega),
(b,\omega) \big\rangle_H \Big\vert
\leq \rho
\big(\Vert (b,\omega)\Vert^2_H \big).
\end{aligned}
\end{equation}
To achieve this goal, we further refine the following estimates from Section \ref{sec:estConv}. In particular, we recall that if $b\in W^{k+1,2}(\mathbb{T}^2)$ and $\omega\in W^{2,2}(\mathbb{T}^2)$ and that $\bu\in W^{3,2}_{\mathrm{div}}(\mathbb{T}^2;\mathbb{R}^2)$ solves \eqref{constrt} for a given $f\in W^{2,2}(\mathbb{T}^2)$ and a given $\bu_h\in W^{3,2}_{\mathrm{div}}(\mathbb{T}^2;\mathbb{R}^2)$, then for  $k\in \{0,1, 2\}$,
\begin{equation}
\begin{aligned}
\label{convEst1}
&\Big\vert \big\langle\bu\cdot\nabla b\,,\, b \big\rangle_{k,2} \Big\vert
\lesssim
\big(1+\Vert \omega\Vert_{2,2} \big)\Vert b \Vert_{k+1,2}^2
\lesssim
\big(1+ \Vert (b,\omega)\Vert^2_H \big)\Vert b \Vert_{3,2}^2
,
\\
&\Big\vert \big\langle\bu\cdot\nabla \omega\,,\,\omega \big\rangle_{k,2} \Big\vert
\lesssim
\big(1+\Vert \omega\Vert_{2,2} \big)\Vert \omega \Vert_{2,2}^2
\lesssim
\big(1+ \Vert (b,\omega)\Vert^2_H \big)\Vert \omega \Vert_{2,2}^2
,
\\
&\Big\vert \big\langle\bu\cdot\nabla b\,,\,\omega \big\rangle_{k,2} \Big\vert
\lesssim
\big(1+\Vert \omega\Vert_{2,2} \big)\Vert b \Vert_{k+1,2}\Vert \omega \Vert_{2,2}
\lesssim
\big(1+ \Vert (b,\omega)\Vert^2_H \big)\big(\Vert b \Vert_{3,2}^2+ \Vert \omega \Vert_{2,2}^2 \big)
,
\\
&\Big\vert \big\langle\bu_h\cdot\nabla b\,,\,\omega \big\rangle_{k,2} \Big\vert
\lesssim
\Vert b \Vert_{k+1,2}\Vert \omega \Vert_{2,2}
\lesssim
\big(\Vert b \Vert_{3,2}^2+ \Vert \omega \Vert_{2,2}^2 \big)
\lesssim
\big(1+ \Vert (b,\omega)\Vert^2_H \big).
\end{aligned}
\end{equation}
In the above estimates, we have used inequalities such as $x\lesssim 1+x^2$ for $x\geq0$ and Young's inequalities.
Furthermore, if $b\in W^{3,2}(\mathbb{T}^2)$, $\omega\in W^{2,2}(\mathbb{T}^2)$ and $\bu\in W^{3,2}_{\mathrm{div}}(\mathbb{T}^2;\mathbb{R}^2)$,  we also have that
\begin{equation}
\begin{aligned}
\label{convEst2}
&\Big\vert \big\langle\bu\cdot\nabla b\,,\, b \big\rangle_{3,2} \Big\vert
\lesssim
\big(1+\Vert \omega\Vert_{2,2} \big)\Vert b \Vert_{3,2}^2
\lesssim
\big(1+ \Vert (b,\omega)\Vert^2_H \big)\Vert b \Vert_{3,2}^2.
\end{aligned}
\end{equation}
We can therefore conclude that for $(b,\omega)\in H$ (and in particular, for any $(b,\omega)\in V$),
\begin{equation}
\begin{aligned}
\Big\vert \big\langle \mathcal{A}(b,\omega),
(b,\omega) \big\rangle_H \Big\vert
\lesssim
\big(1+ \Vert (b,\omega)\Vert^2_H \big)^2
\end{aligned}
\end{equation}
with a constant depending only on $\Vert\bu_h\Vert_{3,2}$ and $\Vert f \Vert_{2,2}$.
Since $\rho(r)=c(1+r)^2\geq0$ is an increasing function of $r\geq 0$, we can  conclude from the result in the Appendix, Section \ref{sec:appendix}, that  for a given $(b_0,\omega_0) \in H \equiv \mathcal{M}$, there exist a  solution $(b, \omega, T) $ of \eqref{abstractEq} of class
\begin{align}
\label{weakContBAndOmega}
b \in C_w([0,T];W^{3,2}(\mathbb{T}^2)),
\qquad
\omega \in C_w([0,T];W^{2,2}(\mathbb{T}^2))
\end{align}
satisfying the bound (recall \eqref{equiNorm})
\begin{align}
\label{weakContBAndOmega1}
\Vert (b, \omega)(t) \Vert_\mathcal{M}^2 \leq
\Vert (b, \omega)(t) \Vert_H^2 \leq r(t),\quad\quad t\in (0,T)
\end{align}
where $r$ is an increasing function on $(0,T)$.
The time $T>0$ in \eqref{weakContBAndOmega}--\eqref{weakContBAndOmega1} depends only on $\Vert\bu_h\Vert_{3,2}$ and $\Vert f \Vert_{2,2}$ by way of the function $\rho(\cdot)$ as well as on $\Vert(b_0,\omega_0)\Vert_{\mathcal{M}}$. To be precise, for $\rho(r)=c(1+r)^2\geq0$ where $c>0$ depends only on $\Vert\bu_h\Vert_{3,2}$ and $\Vert f \Vert_{2,2}$, we obtain
$T>0$ by solving the equation
\begin{align}
\label{ode}
\frac{\dd}{\dd t}r = 2 \rho(r),\qquad \qquad r(0)= \Vert(b_0,\omega_0)\Vert^2_{\mathcal{M}}
\end{align}
which yields
\begin{align}
\label{r}
r(t) = \frac{\Vert(b_0,\omega_0)\Vert^2_{\mathcal{M}}+2c(1+\Vert(b_0,\omega_0)\Vert^2_{\mathcal{M}})\,t}{1-2c(1+\Vert(b_0,\omega_0)\Vert^2_{\mathcal{M}})\,t}
.
\end{align}
We therefore require
\begin{align}
\label{requireTime}
T<\frac{1}{2c(1+\Vert(b_0,\omega_0)\Vert^2_{\mathcal{M}})}
\end{align}
for a solution of \eqref{ode} to exist. This finishes the proof. 
\end{proof}
\begin{remark}
\label{rem:strongContZero}
Note that $r(t)$ converges  to $\Vert(b_0,\omega_0)\Vert^2_{\mathcal{M}}$ as $t\rightarrow 0^+$. Therefore, given the bound \eqref{weakContBAndOmega1} and \eqref{r}, we can conclude that 
\begin{align*}
    \limsup_{t\rightarrow 0^+} \Vert (b,\omega)(t) \Vert^2_{\mathcal{M}}\leq \Vert (b_0,\omega_0)\Vert^2_{\mathcal{M}}.
\end{align*}
On the other hand, because the pair $(b,\omega)$ is of class \eqref{weakContBAndOmega}, we can also conclude that
\begin{align*}
    \liminf_{t\rightarrow 0^+} \Vert (b,\omega)(t) \Vert^2_{\mathcal{M}}\geq \Vert (b_0,\omega_0)\Vert^2_{\mathcal{M}}
\end{align*}
and thus, we obtain strong right continuity at time $t=0$. Since our system \eqref{ce}--\eqref{constrt} is time reversible (see Remark \ref{timeReverse} below), we also obtain strong left continuity at $t=0$ so that we are able to conclude that \eqref{weakContBAndOmega} is actually strongly continuous at $t=0$.
\end{remark}
\begin{remark}
\label{timeReverse}
By time-reversible, we mean that if $b(t,x)$  and $\omega(t,x)$  (with $\omega(t,x)$ related to $\bu(t,x)$ by \eqref{constrt}) solves \eqref{ce}--\eqref{constrt} with data $b_0(x)$, $\omega_0(x)$, $\bu_h(x)$ and $f(x)$, then $-b(-t,x)$ and  $-\omega(-t,x)$  (with $-\omega(-t,x)$ related to $-\bu(-t,x)$ by \eqref{constrt}) solves \eqref{ce}--\eqref{constrt} with data $-b_0(x)$, $-\omega_0(x)$, $-\bu_h(x)$ and $-f(x)$ respectively.
\end{remark}
\begin{remark}
We remark that the function $\rho$ and as such, the function $r$ solving \eqref{ode} is not unique. In fact, one can construct countably many (if not infinitely many) of these functions by varying the estimates for the convective terms on the left-hand sides of \eqref{convEst1}--\eqref{convEst2}. For example, a different choice of H\"older conjugates for estimating the aforementioned terms will lead to a different $\rho$ and $r$. The importance of this remark lies in the fact that $\rho$ and $r$ determines the longevity of the  solution $(b,\omega, T)$ to \eqref{ce}--\eqref{constrt}.  One can therefore tune $T>0$  by modifying $\rho$ and $r$ accordingly. For example, we may also obtain the following
\begin{align}
\rho_1(r)=c(r^2+r), \qquad \quad \rho_2(r)=c(r^\frac{3}{2}+r)
\end{align}
for which the corresponding solutions to \eqref{ode} are
\begin{align}
&r_1(t)=\frac{r(0)e^{2ct}}{r(0)+1-r(0)e^{2ct}},
\\
&r_2(t) =\frac{s^2+s\pm 2s\sqrt{s}}{s^2-2s+1}, \quad \text{where} \quad s=s(t)=\frac{r(0)}{(\sqrt{r(0)}+1)^2}e^{2ct}.
\end{align}
respectively. Again, the constants depends only on $\Vert\bu_h\Vert_{3,2}$ and $\Vert f \Vert_{2,2}$. The second solution $r_2$ is  unsuitable since for one, it is twofold. The first solution $r_1$ however means that we require a time
\begin{align*}
T_1 <\frac{1}{2c}\ln\bigg(1+\frac{1}{\Vert(b_0,\omega_0)\Vert^2_{\mathcal{M}}}\bigg)
\end{align*}  for a solution of \eqref{ode} to exist. 
Therefore, if we  optimize the constant $c>0$ so that they are the same throughout and we compare $r_1$ with $r$ given in \eqref{r}, we are able to conclude that $r$ yields a longer time of existence of a  solution $(b,\omega, T)$ to \eqref{ce}--\eqref{constrt} as compared to $r_1$ since
\begin{align*}
\frac{1}{2c}\ln\bigg(1+\frac{1}{\Vert(b_0,\omega_0)\Vert^2_{\mathcal{M}}}\bigg)
 \leq
\frac{1}{2c(1+\Vert(b_0,\omega_0)\Vert^2_{\mathcal{M}})}. 
\end{align*}
\end{remark}
In the next section, we show an estimate for the difference of two solutions constructed above  after which we are able to strengthen the weak continuity \eqref{weakContBAndOmega} to a strong one for all times of existence.

\subsection{Difference estimate}
\label{subsec:unique} 
In the following, we let $(b^1,\omega^1, T)$ and $(b^2,\omega^2, T)$ be  two solutions of \eqref{ce}--\eqref{constrt}  sharing the same data $\bu_h \in W^{3,2}_{\mathrm{div}}(\mathbb{T}^2;\mathbb{R}^2)$,  $f\in W^{2,2}(\mathbb{T}^2)$ and $(b_0, \omega_0) $. So in particular,
\begin{align}
\label{inPart}
\sup_{t\in[0,T)}\Vert b^i(t,\cdot) \Vert_{3,2}<\infty, \qquad \sup_{t\in[0,T)}\Vert \omega^i(t,\cdot) \Vert_{2,2}<\infty
\end{align}
hold for each $i=1,2$.
We now set 
  $b^{12}:=b^1-b^2$, $\omega^{12}:=\omega^1-\omega^2$ and $\bu^{12}:=\bu^1-\bu^2$ so that  $(b^{12}, \bu^{12}, \omega^{12})$ satisfies
\begin{align}
\frac{\partial}{\partial t} b^{12} + \bu^{12} \cdot \nabla b^1 +\bu^2\cdot\nabla b^{12} =0,
\label{ceDiff}
\\
\frac{\partial}{\partial t}\omega^{12} + \bu^{12}\cdot \nabla( \omega^1-b^1) +\bu^2\cdot \nabla (\omega^{12} - b^{12}) = -\bu_h \cdot \nabla b^{12}, \label{meDiff}
\end{align}
where
\begin{align}
\label{constrtDiff}
\bu^{12} =\nabla^\perp \psi^{12}, 
\qquad
\bu_h =\frac{1}{2} \nabla^\perp h,
\qquad
\omega^{12}=(\Delta -1) \psi^{12}.
\end{align}
We now show the following stability and uniqueness results in the  space $W^{2,2}(\mathbb{T}^2) \times W^{1,2}(\mathbb{T}^2)$ which is larger than the space $W^{3,2}(\mathbb{T}^2) \times W^{2,2}(\mathbb{T}^2)$ of existence of the individual solutions. Indeed, it suffices to show uniqueness in the even larger space $W^{1,2}(\mathbb{T}^2) \times L^2(\mathbb{T}^2)$ but we avoid doing so since we wish to concurrently show the proof of Theorem \ref{thm:stability} as well.  
\begin{proof}[Proof of Theorem \ref{thm:stability} and Corollary \ref{cor:Uniqueness}]
If we apply $\partial^\beta $ to \eqref{ceDiff} with $\vert \beta \vert\leq 2$, we obtain
\begin{align}
\frac{\partial}{\partial t} \partial^\beta b^{12}  +\bu^2\cdot\nabla \partial^\beta b^{12}  = S_1 +  S_2 -\bu^{12} \cdot \nabla \partial^\beta b^1,
\label{ceDiff1}
\end{align} 
where 
\begin{align*}
S_1&:= \bu^{12} \cdot \partial^\beta  \nabla b^1
-
\partial^\beta(\bu^{12} \cdot \nabla  b^1 ),
\\
S_2&:= \bu^2\cdot \partial^\beta  \nabla b^{12}
-
\partial^\beta(\bu^2 \cdot \nabla  b^{12} )
\end{align*}
are such that
\begin{align}
\label{estS1}
\Vert S_1 \Vert_2 &\lesssim \Vert \nabla \bu^{12} \Vert_{4}\Vert b^1 \Vert_{2,4}
+
\Vert \nabla b^1 \Vert_{\infty} \Vert \bu^{12} \Vert_{2,2}
\lesssim \Vert  \bu^{12} \Vert_{2,2}\Vert b^1 \Vert_{3,2},
\\
\label{estS2}
\Vert S_2 \Vert_2 &\lesssim \Vert \nabla \bu^2 \Vert_{\infty}\Vert b^{12} \Vert_{2,2}
+
\Vert \nabla b^{12} \Vert_{4} \Vert \bu^2 \Vert_{2,4}
\lesssim \Vert  \bu^2 \Vert_{3,2}\Vert b^{12} \Vert_{2,2}.
\end{align}
Additionally, the following estimate for the $L^2$ inner product of the last term \eqref{ceDiff1} with $\partial^\beta b^{12}$ holds
\begin{equation}
\begin{aligned}
\Big\vert\big\langle \bu^{12} \cdot \nabla \partial^\beta b^1 \, ,\, \partial^\beta b^{12} \big\rangle \Big\vert
\lesssim
\Vert b^1 \Vert_{3,2}
\Vert \bu^{12} \Vert_\infty
 \Vert b^{12} \Vert_{2,2}.
\end{aligned}
\end{equation}
If we test \eqref{ceDiff1} with $\partial^\beta b^{12}$ and sum over $\vert \beta \vert\leq 2$, we obtain from the above estimates together with \eqref{lem:MasterEst}  and \eqref{inPart},
\begin{equation}
\begin{aligned}
\label{b32estUniqY}
\frac{\dd}{\dd t}
\Vert  b^{12} \Vert_{2,2}^2 
&\lesssim
\Vert \omega^{12} \Vert_{1,2}^2 +
\Vert  b^{12} \Vert_{2,2}^2.
\end{aligned}
\end{equation}
Next, we apply $\partial^\beta$ to \eqref{meDiff} with $\vert \beta\vert \leq 1$ to get
\begin{equation}
\begin{aligned}
\frac{\partial}{\partial t}\partial^\beta \omega^{12} +\bu^2\cdot \nabla \partial^\beta \omega^{12}  =&
- \bu^{12}\cdot \nabla \partial^\beta( \omega^1
 - b^1)
+(\bu^2-\bu_h)\cdot \nabla \partial^\beta  b^{12}
\\&
+S_3+ \ldots+S_7\label{meDiff1}
\end{aligned}
\end{equation}
where
\begin{align*}
S_3&:= \bu^2 \cdot \partial^\beta  \nabla \omega^{12}
-
\partial^\beta(\bu^2 \cdot \nabla  \omega^{12} ),
\\
S_4&:= \bu^{12}\cdot \partial^\beta  \nabla \omega^1
-
\partial^\beta(\bu^{12} \cdot \nabla  \omega^1 ),
\\
S_5&:= -\bu^{12}\cdot \partial^\beta  \nabla b^1
+
\partial^\beta(\bu^{12} \cdot \nabla  b^1 ),
\\
S_6&:= -\bu^2\cdot \partial^\beta  \nabla b^{12}
+
\partial^\beta(\bu^2 \cdot \nabla  b^{12} ),
\\
S_7&:= \bu_h \cdot \partial^\beta  \nabla b^{12}
-
\partial^\beta(\bu_h \cdot \nabla  b^{12} )
\end{align*}
are such that
\begin{align}
\label{estS3}
\Vert S_3 \Vert_2 &\lesssim 
 \Vert \bu^2 \Vert_{1,\infty}\Vert \omega^{12} \Vert_{1,2}
 \lesssim 
\Vert \omega^{12} \Vert_{1,2},
\\
\label{estS4}
\Vert S_4 \Vert_2 &\lesssim \Vert  \bu^{12} \Vert_{1,4}\Vert \omega^1 \Vert_{1,4}
\lesssim \Vert  \bu^{12} \Vert_{2,2} \lesssim 
\Vert \omega^{12} \Vert_{1,2},
\\
\label{estS5}
\Vert S_5 \Vert_2 &\lesssim \Vert  \bu^{12} \Vert_{1,4}\Vert b^1 \Vert_{1,4}\lesssim \Vert  \bu^{12} \Vert_{2,2} \lesssim 
\Vert \omega^{12} \Vert_{1,2},
\\
\label{estS6}
\Vert S_6 \Vert_2 &\lesssim 
 \Vert \bu^2 \Vert_{1,\infty}\Vert b^{12} \Vert_{1,2}
 \lesssim 
 \Vert b^{12} \Vert_{1,2} \lesssim 
 \Vert b^{12} \Vert_{2,2},
 \\
\label{estS7}
\Vert S_6 \Vert_2 &\lesssim 
 \Vert \bu_h \Vert_{1,\infty}\Vert b^{12} \Vert_{1,2}
 \lesssim 
 \Vert b^{12} \Vert_{1,2} \lesssim 
 \Vert b^{12} \Vert_{2,2}.
\end{align}
Also, since $\mathrm{div}(\bu^2)=0$
\begin{equation}
\begin{aligned}
&\Big\vert\big\langle \bu^2\cdot \nabla \partial^\beta \omega^{12} \, ,\, \partial^\beta \omega^{12} \big\rangle \Big\vert
=0,
\\&
\Big\vert\big\langle \bu^{12}\cdot \nabla \partial^\beta( \omega^1
 - b^1) \, ,\, \partial^\beta \omega^{12} \big\rangle \Big\vert
\lesssim
\Vert \omega^1- b^1 \Vert_{2,2}
\Vert \bu^{12} \Vert_\infty
 \Vert \omega^{12} \Vert_{1,2},
\\&
\Big\vert\big\langle (\bu^2-\bu_h)\cdot \nabla \partial^\beta  b^{12} \, ,\, \partial^\beta \omega^{12} \big\rangle \Big\vert
\lesssim
\Vert \bu^2-\bu_h \Vert_\infty
\Vert b^{12} \Vert_{2,2}
 \Vert \omega^{12} \Vert_{1,2}.
\end{aligned}
\end{equation}
Therefore, 
\begin{equation}
\begin{aligned}
\label{w32estUniqY}
\frac{\dd}{\dd t}
\Vert  \omega^{12} \Vert_{1,2}^2 
&\lesssim
\ \Vert \omega^{12} \Vert_{1,2}^2 +
\Vert  b^{12} \Vert_{2,2}^2.
\end{aligned}
\end{equation} 
From \eqref{b32estUniqY} and \eqref{w32estUniqY}, we can conclude from Gr\"onwall's lemma that
\begin{equation}
\begin{aligned}
 \Vert  b^{12}(t, \cdot) \Vert_{2,2}^2
+
\Vert \omega^{12}(t, \cdot) \Vert_{1,2}^2  
\leq \exp(ct)\big(\Vert  b^{12}_0\Vert_{2,2}^2
+ \Vert \omega^{12}_0 \Vert_{1,2}^2 
 \big) 
\end{aligned}
\end{equation} 
for all $t\geq 0$ where the constant $c>0$ depends  on $ (\Vert b^1\Vert_{3,2}, \Vert \omega^1\Vert_{2,2}, \Vert \omega^2\Vert_{2,2}, \Vert \bu_h\Vert_{3,2}, \Vert f\Vert_{2,2})$. This finishes the proof of Theorem \ref{thm:stability}.
To obtain uniqueness, i.e. Corollary \ref{cor:Uniqueness}, we use the fact that $b^{12}_0=0$ and $\omega^{12}_0=0$.
\end{proof}
\noindent Having shown uniqueness, we can conclude that the weakly continuous functions \eqref{weakContBAndOmegaA} are indeed strongly continuous  by using time-reversibility of \eqref{ce}--\eqref{me}.
\begin{lemma}
\label{lem:cweakTocstrong}
Take the assumptions of Proposition \ref{prop:main} to be true. Then
\begin{align*}
b \in C([0,T];W^{3,2}(\mathbb{T}^2)),
\qquad
\omega \in C([0,T];W^{2,2}(\mathbb{T}^2)).
\end{align*}
\end{lemma}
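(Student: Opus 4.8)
The plan is to upgrade the weak continuity in time, established in Proposition \ref{prop:main}, to strong continuity by combining the norm bound already in hand with the uniqueness/stability estimate just proved. Recall from Remark \ref{rem:strongContZero} that strong continuity at $t=0$ was obtained: the weak lower-semicontinuity of the norm gives $\liminf_{t\to 0^+}\Vert(b,\omega)(t)\Vert_{\mathcal{M}}^2 \geq \Vert(b_0,\omega_0)\Vert_{\mathcal{M}}^2$, while the explicit bound \eqref{weakContBAndOmega1} together with $r(t)\to\Vert(b_0,\omega_0)\Vert_{\mathcal{M}}^2$ gives the reverse inequality for the $\limsup$, so the norm is continuous at $0$; combined with weak continuity this yields strong continuity at $0$. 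The task now is to propagate this to every $t_0\in[0,T]$.

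First I would fix an arbitrary $t_0\in[0,T)$ and treat $(b(t_0),\omega(t_0))$ as a fresh initial datum. Since $(b,\omega)(t_0)\in\mathcal{M}$ by \eqref{weakContBAndOmegaA}, Proposition \ref{prop:main} produces a local solution emanating from $(b(t_0),\omega(t_0))$ which is strongly right-continuous at $t_0$ by the argument of Remark \ref{rem:strongContZero}. By the uniqueness Corollary \ref{cor:Uniqueness}, this solution coincides with $(b,\omega)$ on the overlap of the existence intervals, so $(b,\omega)$ itself is strongly right-continuous at $t_0$. To obtain left-continuity at an interior point $t_0>0$, I would invoke the time-reversibility stated in Remark \ref{timeReverse}: the map $(t,x)\mapsto(-b(t_0-t,x),-\omega(t_0-t,x))$ (with data adjusted by the sign flip described there) solves the same system, so its strong right-continuity at the reversed initial time translates back into strong left-continuity of $(b,\omega)$ at $t_0$. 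Right- and left-continuity at every point together give $b\in C([0,T];W^{3,2}(\mathbb{T}^2))$ and $\omega\in C([0,T];W^{2,2}(\mathbb{T}^2))$.

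The mechanism behind each one-sided continuity claim is the standard reconciliation of weak continuity with continuity of the norm: in a Hilbert space, $v_n\rightharpoonup v$ together with $\Vert v_n\Vert\to\Vert v\Vert$ forces $v_n\to v$ strongly, since $\Vert v_n-v\Vert^2=\Vert v_n\Vert^2-2\langle v_n,v\rangle+\Vert v\Vert^2\to 0$. Here the weak convergence $(b,\omega)(t)\rightharpoonup(b,\omega)(t_0)$ as $t\to t_0^\pm$ comes from \eqref{weakContBAndOmegaA}, while convergence of the norms is exactly what the Remark \ref{rem:strongContZero}-type argument delivers once $t_0$ is taken as the base point. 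Because $\mathcal{M}$ and the Hilbert space $H$ of Proposition \ref{prop:main} are equivalent via \eqref{equiNorm}, one may run this argument in $H$ and transfer the conclusion back to $\mathcal{M}$.

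The main obstacle is not any single estimate but ensuring the re-initialization argument is self-consistent: one must check that starting the evolution afresh from $(b(t_0),\omega(t_0))$ genuinely reproduces the original solution on a neighborhood of $t_0$, which is precisely where uniqueness (Corollary \ref{cor:Uniqueness}, valid in the weaker norm $W^{2,2}\times W^{1,2}$) is essential, and that time-reversibility correctly matches the transformed data so that the reversed solution is again covered by Proposition \ref{prop:main}. Both points are genuinely handled by the cited results, so the proof is short; the delicate bookkeeping is simply tracking the sign changes in the data under reversal and confirming they leave the regularity hypotheses of Assumption \ref{dataAllThroughout} intact.
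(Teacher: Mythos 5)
Your proposal is correct and follows essentially the same route as the paper: re-initialize the evolution at an arbitrary $t_0$, use the bound from Proposition \ref{prop:main} together with weak continuity to get strong right-continuity at $t_0$ (as in Remark \ref{rem:strongContZero}), identify the restarted solution with the original one via Corollary \ref{cor:Uniqueness}, and obtain left-continuity from time-reversibility. Your explicit statement of the Hilbert-space mechanism (weak convergence plus norm convergence implies strong convergence) merely makes precise what the paper leaves implicit.
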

\begin{proof}
Firstly, we recall that
the solution  we constructed in Proposition \ref{prop:main} is actually strongly continuous at time $t=0$, recall Remark \ref{rem:strongContZero}. Now consider a solution $(b,\omega, T_0)$ of \eqref{ce}--\eqref{me} where $T_0 \in [0,T]$ is fixed but arbitrary. Then by \eqref{weakContBAndOmega1A}, this solution will satisfy the bound
\begin{align}
\label{bwt0}
\Vert (b, \omega)(T_0) \Vert^2_{\mathcal{M}}
\leq \frac{\Vert (b_0,\omega_0) \Vert^2_{\mathcal{M}} + 2c(1+\Vert(b_0,\omega_0)\Vert^2_{\mathcal{M}})T_0 }{1-2c(1+\Vert(b_0,\omega_0)\Vert^2_{\mathcal{M}})T_0 }
\end{align}
$c=c(\Vert\bu_h\Vert_{3,2}, \Vert f \Vert_{2,2})$. We can now construct a new solution $(b^1,\omega^1, T_0+T_1)$ by taking $(b, \omega)(T_0)$ as an initial condition. A repetition of the argument leading to \eqref{bwt0} means that this new pair $(b^1,\omega^1)$ solving \eqref{ce}--\eqref{me} will satisfy the inequality
\begin{align}
\label{scT0}
\Vert (b^1, \omega^1)(t) \Vert^2_{\mathcal{M}}
\leq \frac{\Vert(b,\omega)(T_0)\Vert^2_{\mathcal{M}} + 2c(1+\Vert(b,\omega)(T_0)\Vert^2_{\mathcal{M}})t }{1-2c(1+\Vert(b,\omega)(T_0)\Vert^2_{\mathcal{M}})t }
\end{align}
on $[T_0, T_0+T_1]$ where 
\begin{align*}
T_1<\frac{1}{2c(1+\Vert(b,\omega)(T_0)\Vert^2_{\mathcal{M}})}.
\end{align*}
Furthermore, $(b^1,\omega^1)$ must coincide with $(b,\omega)$ on $[T_0, T_0+T_1] \cap [0,T]$ by uniqueness and the fact that they agree at $T_0\in[0,T]$.
Subsequently, we obtain from \eqref{scT0}, strong right-continuity of $\Vert (b^1, \omega^1) \Vert_{\mathcal{M}}$ at time $t=T_0$ just as was done for $t=0$ in Remark \ref{rem:strongContZero}. Again, by uniqueness, this implies that $\Vert (b, \omega) \Vert_{\mathcal{M}}$ is also strongly right-continuous at time $t=T_0$.
Since $T_0$ was chosen arbitrarily, this means that    $(b,\omega)$ is strongly right-continuous on $[0,T]$. Since our system is reversible, we can repeat the above argument for the backward equation from which we obtain strong left-continuity on $[0,T]$. We have thus shown that weakly continuous solution \eqref{weakContBAndOmegaA} is indeed strongly continuous.
\end{proof}
%

\subsection{Maximal solution}
\label{subset:maximal}
We now end the section with a proof of the existence of a \textit{maximal solution} to the TQG model.
\begin{proof}[Proof of Theorem \ref{thm:MaxSol}]
By Proposition \ref{prop:main}, we found a time
\begin{align}
T<\frac{1}{2c(1+\Vert(b_0,\omega_0)\Vert^2_{\mathcal{M}})}
\end{align}
with $c=c(\Vert\bu_h \Vert_{3,2}, \Vert f \Vert_{2,2})$ such that $(b, \omega, T)$ is a unique solution of \eqref{ce}--\eqref{constrt}.
At time $T>0$ given above, we now choose $(b_T,\omega_T)\in \mathcal{M}$, $\bu_h \in W^{3,2}_{\mathrm{div}}(\mathbb{T}^2;\mathbb{R}^2)$ and $f\in W^{2,2}(\mathbb{T}^2)$ as our new data where
\begin{align*}
b_T:=b(T,\cdot), \qquad \omega_T:=\omega(T,\cdot).
\end{align*}
By repeating the argument, we can also find 
\begin{align}
\tilde{T}<\frac{1}{2c(1+\Vert(b_T,\omega_T)\Vert^2_{\mathcal{M}})}
\end{align}
with $c=c(\Vert\bu_h \Vert_{3,2}, \Vert f \Vert_{2,2})$, such that  with the new initial conditions $(b_T, \omega_T) $, $(b, \omega)$ uniquely solves \eqref{ce}--\eqref{constrt}, in the sense of Definition \ref{def:solution}, on $[T,T+ \tilde{T}]$. By a gluing argument, we obtain a solution $(b,\omega, T_1)$ of \eqref{ce}--\eqref{constrt} with $T_1=T+\tilde{T}$.
By this iterative procedure:
\begin{itemize}
    \item we obtain an increasing family $(T_n)){n\in \mathbb{N}}$ of time steps whose limit is $T_{\max}\in(0, \infty]$;
    \item for each $n\in\mathbb{N}$, $(b,\omega, T_n)$ is a solution of \eqref{ce}--\eqref{constrt};
    \item if $T_{\max}<\infty$, then
\begin{align*}
\limsup_{T_n\rightarrow T_{\max}} \Vert (b,\omega)(T_n) \Vert_{\mathcal{M}}^2 =\infty
\end{align*} 
\end{itemize}
since otherwise, we can repeat the procedure above to  obtain $\tilde{\tilde{T}}>0$ satisfying
\begin{align*}
\tilde{\tilde{T}}<\frac{1}{2c(1+\Vert(b,\omega)(T_{\max})\Vert^2_{\mathcal{M}})}
\end{align*}
with $c=c(\Vert\bu_h \Vert_{3,2}, \Vert f \Vert_{2,2})$, such that  $(b,\omega, T_{\max}^{\max})$ is a solution of \eqref{ce}--\eqref{constrt} with $T_{\max}^{\max}=T_{\max}+\tilde{\tilde{T}}$. This will 
contradict the fact that $T_{\max}$ is the maximal time.
\end{proof}

\section{Convergence of $\alpha$-TQG to TQG}
\label{sec:converge}
\noindent In this section, for a given $\bu_h \in W^{3,2}_{\mathrm{div}}(\mathbb{T}^2;\mathbb{R}^2)$ and $f\in W^{2,2}(\mathbb{T}^2)$ serving as data for both the TQG \eqref{ce}--\eqref{constrt} and the $\alpha$-TQG \eqref{ceAlpha}--\eqref{constrtAlpha}, we aim to show that any family $(b^\alpha,\omega^\alpha, T_{\max}^\alpha)_{\alpha>0}$ of unique maximal solutions to the $\alpha$-TQG model \eqref{ceAlpha}--\eqref{constrtAlpha} converges strongly to the unique maximal solution $(b,\omega, T_{\max})$ of the TQG model \eqref{ce}--\eqref{constrt} as  $\alpha\rightarrow0$ provided that both system share the same initial conditions and are defined on a common time interval $[0,T]$. Our main result is the following.
\begin{proposition} 
\label{prop:alphaConv}
Under Assumption \ref{dataAllThroughout},
let $(b^\alpha, \omega^\alpha, T_{\max}^\alpha)_{\alpha>0}$ be a family of unique maximal solutions of \eqref{ceAlpha}--\eqref{constrtAlpha} and let  $(b, \omega, T_{\max})$ be the unique maximal solution of  \eqref{ce}--\eqref{constrt}. Then
\begin{equation}
\begin{aligned}
\label{rateOfConv}
\sup_{t\in [0,T]}\big(\Vert  ({b}^\alpha
  -b)(t) \Vert_{1,2}^2
&+
\Vert  ({\omega}^\alpha -\omega)(t) \Vert_{2}^2 \big)
\lesssim
\alpha^2
T\exp(cT)\big[1+\exp(cT) \big]
\end{aligned}
\end{equation}
for any  $T<\min\{T_{\max},T_{\max}^\alpha\}$ and 
\begin{align}
\label{rateOfConvZero}
\sup_{t\in[0,T]}\Vert
({b}^\alpha - b)(t) \Vert_{1,2} \rightarrow 0
, \quad\qquad
\sup_{t\in[0,T]}\Vert 
({\omega}^\alpha - \omega)(t) \Vert_{2}\rightarrow 0
\end{align}
as $\alpha \rightarrow0$.
\end{proposition}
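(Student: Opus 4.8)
The plan is to run an energy estimate on the differences, exactly in the spirit of the proof of Theorem~\ref{thm:stability}, but now working one derivative \emph{below} the space of existence, namely measuring $\bar b := b^\alpha - b$ in $W^{1,2}(\mathbb{T}^2)$ and $\bar\omega := \omega^\alpha - \omega$ in $L^2(\mathbb{T}^2)$, so that the spare derivative can be traded for an explicit power of $\alpha$. Subtracting the TQG system \eqref{ce}--\eqref{me} from the $\alpha$-TQG system \eqref{ceAlpha}--\eqref{meAlpha} and writing $\bar\bu := \bu^\alpha - \bu$, the differences satisfy
\begin{align}
\partial_t \bar b + \bu^\alpha\cdot\nabla\bar b + \bar\bu\cdot\nabla b &= 0, \nonumber \\
\partial_t \bar\omega + \bu^\alpha\cdot\nabla(\bar\omega - \bar b) + \bar\bu\cdot\nabla(\omega - b) &= -\bu_h\cdot\nabla\bar b, \nonumber
\end{align}
with $\bar b(0)=\bar\omega(0)=0$ since both systems share the same data.

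The crucial point, and the main obstacle, is that $\bar\bu$ is \emph{not} simply the Biot--Savart reconstruction of $\bar\omega$, because of the extra smoothing operator $(1-\alpha\Delta)^{-1}$ in \eqref{utilde}. I would therefore split $\bar\bu = (\tilde\bu - \bu) + (\bu^\alpha - \tilde\bu)$ through the auxiliary field $\tilde\bu := \nabla^\perp(\Delta-1)^{-1}(\omega^\alpha - f)$. The first piece is genuinely driven by the vorticity difference, $\tilde\bu - \bu = \nabla^\perp(\Delta-1)^{-1}\bar\omega$, so the master estimate \eqref{lem:MasterEst} gives $\|\tilde\bu - \bu\|_{1,2}\lesssim\|\bar\omega\|_2$. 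The second piece carries the entire $\alpha$-discrepancy: from \eqref{utilde} one has
\[
\bu^\alpha - \tilde\bu = \alpha\,\nabla^\perp\Delta(1-\alpha\Delta)^{-1}(\Delta-1)^{-1}(\omega^\alpha - f),
\]
and since the Fourier multiplier of $\alpha\Delta(1-\alpha\Delta)^{-1}$ is dominated pointwise by $\alpha|\xi|^2$, one degree of smoothness of $\omega^\alpha$ is converted into a factor $\alpha$, giving $\|\bu^\alpha - \tilde\bu\|_{1,2}\lesssim \alpha\,\|\omega^\alpha - f\|_{2,2}\lesssim\alpha$. This last bound is uniform in $\alpha$ on $[0,T]$ because Proposition~\ref{prop:main}, applied to the $\alpha$-TQG model with constants independent of $\alpha$ (Remark~\ref{remark1}, Lemma~\ref{lem:mainAlpha}), furnishes $\alpha$-uniform control of $\|(b^\alpha,\omega^\alpha)\|_{\mathcal M}$, hence of $\|\bu^\alpha\|_{3,2}$, for every $T<\min\{T_{\max},T_{\max}^\alpha\}$. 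Altogether $\|\bar\bu\|_{1,2}\lesssim\|\bar\omega\|_2 + \alpha$.

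With this decomposition in hand the energy estimates are routine. For the buoyancy I apply $\partial^\beta$ with $|\beta|\le 1$, test against $\partial^\beta\bar b$ and sum: the transport term $\bu^\alpha\cdot\nabla\bar b$ loses its top order by the divergence-free cancellation and the commutator estimate \eqref{E6}, leaving $\lesssim\|\bu^\alpha\|_{3,2}\|\bar b\|_{1,2}^2$, while $\langle\bar\bu\cdot\nabla b,\bar b\rangle_{1,2}\lesssim\|\bar\bu\|_{1,2}\|b\|_{3,2}\|\bar b\|_{1,2}$, so that, after Young's inequality and the bound on $\bar\bu$,
\[
\frac{\dd}{\dd t}\|\bar b\|_{1,2}^2\lesssim \|\bar b\|_{1,2}^2 + \|\bar\omega\|_2^2 + \alpha^2 .
\]
For the vorticity I test its difference equation against $\bar\omega$ in $L^2$; again $\langle\bu^\alpha\cdot\nabla\bar\omega,\bar\omega\rangle=0$, and the remaining transport and forcing terms are handled by Hölder together with the $2$D embedding $W^{1,2}\hookrightarrow L^4$ (so that, for instance, $\langle\bar\bu\cdot\nabla(\omega-b),\bar\omega\rangle\lesssim\|\bar\bu\|_{1,2}\|\omega-b\|_{2,2}\|\bar\omega\|_2$), yielding
\[
\frac{\dd}{\dd t}\|\bar\omega\|_2^2\lesssim \|\bar b\|_{1,2}^2 + \|\bar\omega\|_2^2 + \alpha^2 .
\]

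Finally I would close the system with Gr\"onwall's lemma. Using $\bar b(0)=\bar\omega(0)=0$ and the elementary inequality $e^{ct}-1\le cT e^{cT}$ to expose the factor $T$, the coupled inequalities produce the rate \eqref{rateOfConv}, with the compound factor $\exp(cT)[1+\exp(cT)]$ reflecting the successive estimation of $\bar b$ and $\bar\omega$. The convergence statement \eqref{rateOfConvZero} is then immediate, since the right-hand side of \eqref{rateOfConv} tends to $0$ as $\alpha\to0$ for fixed $T$.
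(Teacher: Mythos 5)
Your proof is correct, but it takes a genuinely different route from the paper's. The paper introduces an auxiliary \emph{intermediate} PDE system (the ``$\mathbf{i}\alpha$-TQG'' equations \eqref{ceAlphaInter}--\eqref{constrtAlphaInter}, whose velocity is the $\alpha$-smoothed Biot--Savart reconstruction of the \emph{TQG} vorticity $\omega$), proves uniform-in-$\alpha$ bounds for its maximal solution (Lemma \ref{lem:iAlpha}), and then performs two separate difference estimates — TQG versus $\mathbf{i}\alpha$-TQG, where the velocity discrepancy $\overline{\bu}^\alpha-\bu=\alpha\Delta\nabla^\perp(1-\alpha\Delta)^{-1}(\Delta-1)^{-1}(\omega-f)$ carries the factor $\alpha$, and then $\mathbf{i}\alpha$-TQG versus $\alpha$-TQG, where the velocity difference is controlled by $\Vert\omega-\omega^\alpha\Vert_2$ — before concluding by the triangle inequality; this successive Gr\"onwall structure is precisely what produces the compound factor $\exp(cT)[1+\exp(cT)]$ in \eqref{rateOfConv}. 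You instead compare the two systems in a single step and perform the splitting purely at the level of the velocity reconstruction, through the algebraic intermediary $\tilde\bu=\nabla^\perp(\Delta-1)^{-1}(\omega^\alpha-f)$ rather than through a third PDE solution; the multiplier computation for $\alpha\Delta(1-\alpha\Delta)^{-1}$ is the same as the paper's, and your coupled differential inequalities close by one application of Gr\"onwall to give $\alpha^2\,cT e^{cT}$, which is in fact slightly sharper than (and certainly implies) the stated bound — so your final remark attributing the compound exponential factor to ``successive estimation'' does not actually describe your own argument, which never needs it. Both routes rest on the same nontrivial ingredient, namely an $\alpha$-uniform bound on $\sup_{[0,T]}\Vert(b^\alpha,\omega^\alpha)\Vert_{\mathcal M}$ (hence on $\Vert\bu^\alpha\Vert_{3,2}$) for $T<\min\{T_{\max},T^\alpha_{\max}\}$, which you invoke explicitly via Lemma \ref{lem:mainAlpha}; the paper's detour through the intermediate system makes the corresponding uniform bound slightly more transparent there because the intermediate velocity is driven by the fixed TQG vorticity, but it does not remove the need for uniform control of the $\alpha$-TQG solution itself in the second comparison. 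Your version is shorter and avoids constructing and proving well-posedness for an extra system, at no loss of rate.
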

We will now devote the entirety of this section to the proof of Proposition \ref{prop:alphaConv}.
In order to achieve this goal, we first introduce a unique maximal solution $(\overline{b}^\alpha,\overline{\omega}^\alpha, \overline{T}^\alpha_{\max})$ with initial conditions $b_0 \in W^{3,2}(\mathbb{T}^2)$ and $\omega_0\in W^{2,2}(\mathbb{T}^2)$  of the following ``intermediate $\alpha$-TQG" equation given by
\begin{align}
\frac{\partial}{\partial t} \overline{b}^\alpha + \overline{\bu}^\alpha \cdot \nabla \overline{b}^\alpha =0,
\label{ceAlphaInter}
\\
\frac{\partial}{\partial t}\overline{\omega}^\alpha + \overline{\bu}^\alpha\cdot \nabla( \overline{\omega}^\alpha-\overline{b}^\alpha) = -\bu_h \cdot \nabla \overline{b}^\alpha, \label{meAlphaInter}
\end{align}
where like  \eqref{utilde},
\begin{align} 
\label{constrtAlphaInter}
\overline{\bu}^\alpha =\nabla^\perp (1-\alpha \Delta )^{-1}(\Delta-1)^{-1}(\omega -f), 
\quad
\bu_h =\frac{1}{2} \nabla^\perp h,
\end{align}
and where $\omega$(and not $\omega^\alpha$) satisfies the original potential vorticity equation \eqref{me} strongly in the PDE sense. 
The existence of a unique maximal solution to the above intermediate system, denoted by $\bf{i} \alpha$-TQG for short, follow from the proof of existence for the original TQG and it particular, for a uniform-in-$\alpha$ set of data $(\bu_h,f,b_0,\omega_0)$, we have the following uniform-in-$\alpha$ estimates
\begin{align}
\label{unifInAlpha}
\sup_{\alpha>0}\sup_{t\in[0,\overline{T}^\alpha_{\max} )}\Vert \overline{b}^\alpha \Vert_{3,2}\lesssim 1,
\quad
\sup_{\alpha>0}\sup_{t\in[0,\overline{T}^\alpha_{\max})}\Vert \overline{\omega}^\alpha \Vert_{2,2}\lesssim 1
\end{align}
for the maximal solution of \eqref{ceAlphaInter}--\eqref{constrtAlphaInter}. Indeed, exactly as in Theorem \ref{thm:mainAlpha}, we also have  the following result.
\begin{lemma}
\label{lem:iAlpha}
Fix $\alpha>0$. There exist a unique maximal solution $(\overline{b}^\alpha,\overline{\omega}^\alpha, \overline{T}^\alpha_{\max})$ of \eqref{ceAlphaInter}--\eqref{constrtAlphaInter} under Assumption \ref{dataAllThroughout}. In particular, uniformly of $\alpha>0$, the inequality
\begin{align}
\label{unifInAlpha}
\Vert (\overline{b}^\alpha, \overline{\omega}^\alpha)(t) \Vert_{\mathcal{M}} \lesssim 1
\end{align}
holds for $t<\overline{T}^\alpha_{\max}$.
\end{lemma}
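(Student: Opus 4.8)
The plan is to exploit the fact that \eqref{ceAlphaInter}--\eqref{meAlphaInter} is a \emph{linear} system in the unknown pair $(\overline{b}^\alpha,\overline{\omega}^\alpha)$: its transport velocity $\overline{\bu}^\alpha$ is built in \eqref{constrtAlphaInter} out of the potential vorticity $\omega$ of the genuine TQG solution supplied by Theorem \ref{thm:MaxSol}, so $\overline{\bu}^\alpha$ is a prescribed (time-dependent, $\alpha$-dependent, but \emph{known}) coefficient rather than a function of the unknowns. Because of this, the entire existence/uniqueness apparatus transfers verbatim from Section \ref{sec:construct}, and the genuine content of the lemma is the \textbf{uniformity in $\alpha$} of the bound \eqref{unifInAlpha}, which I single out as the main point.

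First I would isolate the one property of the smoothing operator that drives everything: the Fourier multiplier of $(1-\alpha\Delta)^{-1}$ equals $(1+\alpha\vert\xi\vert^2)^{-1}\in(0,1]$, so $(1-\alpha\Delta)^{-1}$ is a contraction on every $H^s(\mathbb{T}^2)$, \emph{uniformly in} $\alpha\geq0$. Writing $\overline{\bu}^\alpha=\nabla^\perp\overline{\psi}^\alpha$ with $(\Delta-1)\overline{\psi}^\alpha=(1-\alpha\Delta)^{-1}(\omega-f)=:\overline{w}^\alpha$ (the analogue of \eqref{constrtxx}), I would apply \eqref{lem:MasterEst} with $\overline{w}^\alpha$ in the role of $w$ and then the contraction bound to obtain the uniform master estimate
\[
\Vert \overline{\bu}^\alpha\Vert_{k+1,2}\lesssim \Vert (1-\alpha\Delta)^{-1}(\omega-f)\Vert_{k,2}\lesssim \Vert \omega\Vert_{k,2}+\Vert f\Vert_{k,2},
\]
with a constant independent of $\alpha$. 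This is exactly the analogue of \eqref{lem:MasterEst} needed to rerun the arguments of Section \ref{sec:construct} with $\bu$ replaced by $\overline{\bu}^\alpha$.

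For existence and uniqueness I would then invoke the machinery already built for TQG. The convective estimates of Section \ref{sec:estConv} used only the master estimate for the velocity, so they hold for $\overline{\bu}^\alpha$ with the norms of $\omega$ now appearing as \emph{data} rather than as the unknown; feeding these into the abstract Kato scheme of Proposition \ref{prop:main}, the difference/uniqueness estimate of Corollary \ref{cor:Uniqueness}, the weak-to-strong upgrade of Lemma \ref{lem:cweakTocstrong}, and the gluing construction of Theorem \ref{thm:MaxSol} produces, for each fixed $\alpha>0$, a unique maximal solution $(\overline{b}^\alpha,\overline{\omega}^\alpha,\overline{T}^\alpha_{\max})$, precisely as in Theorem \ref{thm:mainAlpha}.

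The uniform bound \eqref{unifInAlpha} is obtained directly by the energy method, and here linearity is decisive. Differentiating \eqref{ceAlphaInter} up to order three and \eqref{meAlphaInter} up to order two, testing against $\partial^\beta\overline{b}^\alpha$ and $\partial^\beta\overline{\omega}^\alpha$, and using $\mathrm{div}\,\overline{\bu}^\alpha=0$ together with the commutator estimate \eqref{E6}, every term on the right is linear in $(\overline{b}^\alpha,\overline{\omega}^\alpha)$ with coefficient controlled by $\Vert\overline{\bu}^\alpha\Vert_{3,2}$, $\Vert\bu_h\Vert_{3,2}$ and $\Vert f\Vert_{2,2}$. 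By the uniform master estimate this velocity coefficient is bounded by $\sup_t\Vert\omega(t)\Vert_{2,2}+\Vert f\Vert_{2,2}$, which is finite on any $[0,T]$ with $T<T_{\max}$ and \emph{independent of} $\alpha$, so that
\[
\frac{\dd}{\dd t}\Vert(\overline{b}^\alpha,\overline{\omega}^\alpha)\Vert_{\mathcal{M}}^2\lesssim c\big(1+\Vert(\overline{b}^\alpha,\overline{\omega}^\alpha)\Vert_{\mathcal{M}}^2\big)
\]
with $c$ independent of $\alpha$; Gr\"onwall's lemma then yields \eqref{unifInAlpha} uniformly in $\alpha$ on such intervals. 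The only delicate point, and the one I would be most careful to flag, is precisely this $\alpha$-independence: it rests entirely on the contraction bound for $(1-\alpha\Delta)^{-1}$, without which the smoothing operator would inject $\alpha$-dependent constants and the estimate would degenerate as $\alpha\to0$, defeating its use in the convergence result of Proposition \ref{prop:alphaConv}.
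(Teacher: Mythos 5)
Your proposal is correct, and for the existence and uniqueness part it coincides with what the paper does: the paper's proof of Lemma \ref{lem:iAlpha} is a one-line transfer of the machinery of Section \ref{sec:construct} (Proposition \ref{prop:main}, Lemma \ref{lem:cweakTocstrong}, Corollary \ref{cor:Uniqueness} and the gluing argument), exactly as in Theorem \ref{thm:mainAlpha}, resting on the observation of Remark \ref{remark1} that the master estimate \eqref{lem:MasterEst} survives the insertion of $(1-\alpha\Delta)^{-1}$ --- which is precisely your uniform contraction bound for the multiplier $(1+\alpha\vert\xi\vert^2)^{-1}$. Where you genuinely diverge is in the derivation of the uniform bound \eqref{unifInAlpha}: the paper simply reuses the nonlinear a priori bound $r(t)$ of Proposition \ref{prop:main}, noting that its constants depend only on the $\alpha$-independent data $(\bu_h,f,b_0,\omega_0)$, whereas you exploit the fact that \eqref{ceAlphaInter}--\eqref{constrtAlphaInter} is a \emph{linear} transport system in $(\overline{b}^\alpha,\overline{\omega}^\alpha)$ with the prescribed coefficient $\overline{\bu}^\alpha$ controlled by $\sup_{t\le T}\Vert\omega(t)\Vert_{2,2}+\Vert f\Vert_{2,2}$, and close a linear Gr\"onwall inequality. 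Your route buys a cleaner and in fact sharper conclusion (an exponential bound on every compact $[0,T]$ with $T<T_{\max}$, making it transparent that $\overline{T}^\alpha_{\max}\ge T_{\max}$), and it more honestly reflects how \eqref{unifInAlpha} is actually used in Proposition \ref{prop:alphaConv}, namely on fixed compact subintervals; the paper's version is shorter but states the uniform bound over the whole of $[0,\overline{T}^\alpha_{\max})$, which should be read in the same compact-subinterval sense. Both arguments hinge on the same single ingredient you correctly flag as the crux: the $\alpha$-uniform boundedness of $(1-\alpha\Delta)^{-1}$ on every $H^s(\mathbb{T}^2)$.
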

As a next step, we show that any family  $(\overline{b}^\alpha,\overline{\omega}^\alpha, \overline{T}^\alpha_{\max})_{\alpha>0}$ of maximal solutions to the intermediate $\mathbf{i}\alpha$-TQG model \eqref{ceAlphaInter}--\eqref{constrtAlphaInter} (rather than of the $\alpha$-TQG model \eqref{ceAlpha}--\eqref{constrtAlpha}) converges strongly to the unique maximal solution of the TQG model \eqref{ce}--\eqref{constrt} as  $\alpha\rightarrow0$ on the time interval $[0,T]$ where $T<\min\{\overline{T}^\alpha_{\max}, T_{\max} \}$. In order to achieve this goal, we replicate the uniqueness argument in Section \ref{subsec:unique} by setting 
  $b^{12}:= \overline{b}^\alpha
  -b$, $\omega^{12}:= \overline{\omega}^\alpha -\omega$ and $\bu^{12}:= \overline{\bu}^\alpha-\bu$ so that  $(b^{12}, \bu^{12}, \omega^{12})$ satisfies
\begin{align}
\frac{\partial}{\partial t} b^{12} + \bu^{12} \cdot \nabla \overline{b}^\alpha + \bu \cdot\nabla b^{12} =0,
\\
\frac{\partial}{\partial t}\omega^{12} + \bu^{12}\cdot \nabla( \overline{\omega}^\alpha - \overline{b}^\alpha) +\bu \cdot \nabla (\omega^{12} - b^{12}) = -\bu_h \cdot \nabla b^{12}.
\end{align}
Similar to the uniqueness argument, if we apply $\partial^\beta $ to the equation for $b^{12}$ above where now $\vert \beta \vert\leq 1$, we obtain
\begin{align}
\frac{\partial}{\partial t} \partial^\beta b^{12}  + \bu \cdot\nabla \partial^\beta b^{12}  = S_1 +  S_2 -\bu^{12} \cdot \nabla \partial^\beta \overline{b}^\alpha,
\label{ceDiff5}
\end{align} 
where 
\begin{align*}
S_1&:= \bu^{12} \cdot \partial^\beta  \nabla \overline{b}^\alpha
-
\partial^\beta(\bu^{12} \cdot \nabla  \overline{b}^\alpha )
=
-\partial\bu^{12}\cdot \nabla \overline{b}^\alpha
,
\\
S_2&:=\bu \cdot \partial^\beta  \nabla b^{12}
-
\partial^\beta(\bu \cdot \nabla  b^{12} )=-\partial\bu\cdot \nabla b^{12}
\end{align*}
are such that
\begin{align}
\Vert S_1 \Vert_2 \lesssim 
 \Vert  \bu^{12} \Vert_{1,2}\Vert \overline{b}^\alpha \Vert_{3,2},
\qquad \qquad
\Vert S_2 \Vert_2 \lesssim
\Vert  \bu \Vert_{3,2}\Vert b^{12} \Vert_{1,2}.
\end{align}
Also,
\begin{equation}
\begin{aligned}
\Big\vert\big\langle \bu^{12} \cdot \nabla \partial^\beta \overline{b}^\alpha  \, ,\, \partial^\beta b^{12} \big\rangle \Big\vert
 \lesssim
\Vert \bu^{12} \Vert_{1,2}
\Vert \overline{b}^\alpha  \Vert_{3,2}
 \Vert b^{12} \Vert_{1,2}.
\end{aligned}
\end{equation}
Collecting the information above yields the estimate
\begin{equation}
\begin{aligned}
\label{b32estUniq}
\frac{\dd}{\dd t}
\Vert  b^{12} \Vert_{1,2}^2 
&\lesssim
\big(
\Vert  \bu^{12} \Vert_{1,2}\Vert \overline{b}^\alpha \Vert_{3,2},
+
 \Vert \bu \Vert_{3,2}\Vert b^{12} \Vert_{1,2}
 \big)\Vert b^{12} \Vert_{1,2}.
\end{aligned}
\end{equation} 
For the equation for $\omega^{12}$, we aim to derive a square-integrable estimate in space. For this, we first note that
\begin{align}
&\Big\vert\big\langle \bu \cdot \nabla  \omega^{12} \, ,\,  \omega^{12} \big\rangle \Big\vert
=0.
\end{align}
Next, we have that
\begin{align}
\Big\vert\big\langle (\bu -\bu_h)\cdot \nabla   b^{12} \, ,\,  \omega^{12} \big\rangle \Big\vert
\lesssim
\Vert \bu -\bu_h \Vert_\infty
\Vert b^{12} \Vert_{1,2}
 \Vert \omega^{12} \Vert_{2}.
\end{align}
Finally, we also have that
\begin{equation}
\begin{aligned}
\Big\vert\big\langle \bu^{12}\cdot \nabla (\overline{\omega}^\alpha - \overline{b}^\alpha) \, ,\,  \omega^{12} \big\rangle \Big\vert
&\lesssim
\Vert \bu^{12} \Vert_{4}
\Vert \nabla(\overline{\omega}^\alpha - \overline{b}^\alpha) \Vert_{4}
 \Vert \omega^{12} \Vert_{2}
 \\&
\lesssim
\Vert \bu^{12} \Vert_{1,2}
\Vert\overline{\omega}^\alpha - \overline{b}^\alpha \Vert_{2,2}
 \Vert \omega^{12} \Vert_{2}.
\end{aligned}
\end{equation}
Therefore,
\begin{equation}
\begin{aligned}
\label{w32estUniq}
\frac{\dd}{\dd t}
\Vert  \omega^{12} \Vert_{2}^2 
&\lesssim
\big(\Vert \bu -\bu_h \Vert_\infty
\Vert b^{12} \Vert_{1,2}
 +
 \Vert \bu^{12} \Vert_{1,2}
\Vert\overline{\omega}^\alpha - \overline{b}^\alpha \Vert_{2,2}
\big) \Vert \omega^{12} \Vert_{2}.
\end{aligned}
\end{equation} 
Summing up with the estimate for $b^{12}$ then yield
\begin{equation}
\begin{aligned}
\frac{\dd}{\dd t}\big(
\Vert  b^{12} \Vert_{1,2}^2
+
\Vert  \omega^{12} \Vert_{2}^2 
\big) 
&\lesssim
\big(
\Vert  \bu^{12} \Vert_{1,2}\Vert \overline{b}^\alpha \Vert_{3,2},
+
 \Vert  \bu \Vert_{3,2}\Vert b^{12} \Vert_{1,2}
 \big)\Vert b^{12} \Vert_{1,2}
 \\&+
\big(\Vert \bu -\bu_h \Vert_\infty
\Vert b^{12} \Vert_{1,2}
 +
 \Vert \bu^{12} \Vert_{1,2}
\Vert\overline{\omega}^\alpha - \overline{b}^\alpha \Vert_{2,2}
\big) \Vert \omega^{12} \Vert_{2}.
\end{aligned}
\end{equation} 
If we use \eqref{lem:MasterEst} and \eqref{unifInAlpha}, we obtain
\begin{equation}
\begin{aligned}
\label{concludeDiff}
\frac{\dd}{\dd t}\big(
\Vert  b^{12} \Vert_{1,2}^2
+
\Vert  \omega^{12} \Vert_{2}^2 
\big) 
&\lesssim
\big(
\Vert  \bu^{12} \Vert_{1,2}
+
\Vert b^{12} \Vert_{1,2}
 \big)\Vert b^{12} \Vert_{1,2}
 \\&+
\big(
\Vert b^{12} \Vert_{1,2}
 +
 \Vert \bu^{12} \Vert_{1,2}
\big) \Vert \omega^{12} \Vert_{2}
\\&
\lesssim
\Vert  \bu^{12} \Vert_{1,2}^2
+
\Vert b^{12} \Vert_{1,2}^2
 +
 \Vert \omega^{12} \Vert_{2}^2.
\end{aligned}
\end{equation}
Now recall that the velocity fields for the $\alpha$-TQG, the $\bf{i}\alpha$-TQG, and the TQG are given by 
\begin{align} 
{\bu}^\alpha =\nabla^\perp (1-\alpha \Delta )^{-1}(\Delta-1)^{-1}(\omega^\alpha -f), 
\\
\overline{\bu}^\alpha =\nabla^\perp (1-\alpha \Delta )^{-1}(\Delta-1)^{-1}(\omega -f), 
\\
\bu =\nabla^\perp (\Delta-1)^{-1}(\omega -f)
\end{align}
respectively. As a result, in particular, the difference
\begin{align*}
\overline{\bu}^\alpha -\bu =\alpha \Delta\nabla^\perp (1-\alpha \Delta )^{-1}(\Delta-1)^{-1}(\omega -f)
\end{align*}
enjoys two extra order of regularity. Furthermore, since each of these individual velocity fields satisfies the bound \eqref{lem:MasterEst}, it immediately follows that
\begin{equation}
\begin{aligned}
\Vert \overline{\bu}^\alpha -\bu \Vert_{1,2}^2
\lesssim
\alpha^2
\Vert \bu \Vert_{3,2}^2
\lesssim
 \alpha^2
\Vert \omega - f \Vert_{2,2}^2
\lesssim
 \alpha^2\big(
\Vert \omega  \Vert_{2,2}^2 + \Vert f \Vert_{2,2}^2 \big) \lesssim \alpha^2.
\end{aligned}
\end{equation}
This means that for the comparison of the TQG and $\bf{i}\alpha$-TQG, we can  conclude from \eqref{concludeDiff} that
\begin{equation}
\begin{aligned}
\frac{\dd}{\dd t}\big(
\Vert  \overline{b}^\alpha -b \Vert_{1,2}^2
&+
\Vert  \overline{\omega}^\alpha -\omega  \Vert_{2}^2 
\big) 
\lesssim
\alpha^2
+
\Vert \overline{b}^\alpha -b \Vert_{1,2}^2
 +
 \Vert \overline{\omega}^\alpha -\omega  \Vert_{2}^2
\end{aligned}
\end{equation}
and by Gr\"onwall's lemma, (note that $b^{12}_0=0$ and $\omega^{12}_0=0$)
\begin{equation}
\begin{aligned}
\label{decayRateTQGandIATQG}
\Vert  (\overline{b}^\alpha -b)(t) \Vert_{1,2}^2
&+
\Vert  (\overline{\omega}^\alpha -\omega)(t) \Vert_{2}^2 
\lesssim
\alpha^2
T\exp(cT),
\qquad
t\in[0,T].
\end{aligned}
\end{equation}
The above gives the decay rate of the difference of the solution to the $\mathbf{i}\alpha$-TQG and TQG equations. Our next goal is to obtain a decay rate for the difference between the $\mathbf{i}\alpha$-TQG and the  $\alpha$-TQG equations. Recall that they share the same initial data.
For this, we use the estimate
\begin{align*}
\Vert \overline{\bu}^\alpha - {\bu}^\alpha \Vert_{1,2}^2
\lesssim
\Vert \omega - \omega^\alpha \Vert_{2}^2
\lesssim
\Vert \overline{\omega}^\alpha - \omega  \Vert_{2}^2
+
\Vert  \overline{\omega}^\alpha  - \omega^\alpha \Vert_{2}^2
\end{align*}
so that by setting 
  $b^{12}:= \overline{b}^\alpha
  -b^\alpha$, $\omega^{12}:= \overline{\omega}^\alpha -\omega^\alpha$ and $\bu^{12}:= \overline{\bu}^\alpha-{\bu}^\alpha$,
we obtain from \eqref{concludeDiff}, 
\begin{equation}
\begin{aligned}
\frac{\dd}{\dd t}\big(
\Vert  b^{12} \Vert_{1,2}^2
+
\Vert  \omega^{12} \Vert_{2}^2 
\big) 
&\lesssim
\Vert  \bu^{12} \Vert_{1,2}^2
+
\Vert b^{12} \Vert_{1,2}^2
 +
 \Vert \omega^{12} \Vert_{2}^2
 \\&
 \lesssim
\Vert  \overline{\omega}^\alpha -\omega \Vert_{2}^2
+
\Vert b^{12} \Vert_{1,2}^2
 +
 \Vert \omega^{12} \Vert_{2}^2.
\end{aligned}
\end{equation}
Since both systems share the same initial conditions, by Gr\"onwall's lemma  and \eqref{decayRateTQGandIATQG}, it follows that
\begin{equation}
\begin{aligned}
\label{decayRateATQGandIATQG}
\Vert  (\overline{b}^\alpha
  -b^\alpha)(t) \Vert_{1,2}^2
+
\Vert  (\overline{\omega}^\alpha -\omega^\alpha)(t) \Vert_{2}^2 
&\leq \exp(cT)\int_0^T c\,\Vert (\overline{\omega}^\alpha -\omega)(t) \Vert_2^2 \dd t
\\&\lesssim
\alpha^2
T\big[\exp(cT) \big]^2.
\end{aligned}
\end{equation}
By the triangle inequality, it follows from \eqref{decayRateTQGandIATQG} and \eqref{decayRateATQGandIATQG} that 
\begin{equation}
\begin{aligned}
\label{decayRateATQGandTQG}
\Vert  ({b}^\alpha
  -b)(t) \Vert_{1,2}^2
&+
\Vert  ({\omega}^\alpha -\omega)(t) \Vert_{2}^2 
\lesssim
\alpha^2
T\exp(cT)\big[1+\exp(cT) \big]
\end{aligned}
\end{equation}
so that
\begin{align*}
\sup_{t\in[0,T]}\Vert
({b}^\alpha - b)(t) \Vert_{1,2} \rightarrow 0
, \quad\qquad
\sup_{t\in[0,T]}\Vert 
({\omega}^\alpha - \omega)(t) \Vert_{2}\rightarrow 0
\end{align*}
as $\alpha \rightarrow0$. This ends the proof.

\begin{remark}
We conjecture that Proposition \ref{prop:alphaConv} may be extended to the stronger space $W^{2,2}(\mathbb{T}^2) \times W^{1,2}(\mathbb{T}^2)$ in which we showed the stability and uniqueness of the maximal solution. The cost, however, may be the loss of the corresponding decay rate \eqref{rateOfConv}. The ideas in the proof of Proposition $6$ of Terence Tao's note : \href{https://terrytao.wordpress.com/2018/10/09/254a-notes-3-local-well-posedness-for-the-euler-equations/#more-10770}{local-well-posedness-for-the-euler-equations} may suffice.
\end{remark}

\section{Conditions for blowup of the $\alpha$-TQG model}
\label{sec:blowup}
In the following, we give  Beale--Kato--Majda \cite{beale1984remarks} type conditions under which we expect the strong solution of the $\alpha$-TQG to blowup. 
\begin{theorem}
\label{thm:BKM}
Fix $\alpha>0$. Under Assumption \ref{dataAllThroughout}, let $(b^\alpha, \omega^\alpha,  T_{\max}^\alpha) $ be
a  \textit{maximal solution} of   \eqref{ceAlpha}--\eqref{constrtAlpha}. If $T_{\max}^\alpha<\infty$, then
\begin{align*}
\int_0^{T_{\max}^\alpha} \Vert \nabla b^\alpha\Vert_{\infty}\dt = \infty.
\end{align*}
\end{theorem}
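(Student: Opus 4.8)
The plan is to argue by contrapositive: assuming $T_{\max}^\alpha<\infty$ together with $\int_0^{T_{\max}^\alpha}\|\nabla b^\alpha\|_\infty\dt<\infty$, I will show that the energy $E(t):=\|(b^\alpha,\omega^\alpha)(t)\|_{\mathcal M}^2=\|b^\alpha\|_{3,2}^2+\|\omega^\alpha\|_{2,2}^2$ stays bounded as $t\uparrow T_{\max}^\alpha$, which contradicts the blow-up characterization \eqref{limsupSol} of a maximal solution. Since $\alpha>0$ is fixed, I allow every implied constant to depend on $\alpha$, and the decisive structural feature I exploit is the extra smoothing in \eqref{utilde}: writing $\bu^\alpha=\nabla^\perp(1-\alpha\Delta)^{-1}(\Delta-1)^{-1}(\omega^\alpha-f)$, the two inverse Helmholtz operators together with $\nabla^\perp$ make $\bu^\alpha$ gain three derivatives over $\omega^\alpha$, so that $\|\bu^\alpha\|_{m+3,2}\lesssim_\alpha\|\omega^\alpha\|_{m,2}+\|f\|_{m,2}$ for all $m\ge 0$, with $(1-\alpha\Delta)^{-1}$ bounded uniformly on each $W^{k,2}$.

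First I would record the low-order bounds. Because \eqref{ceAlpha} transports $b^\alpha$ along the divergence-free field $\bu^\alpha$, every $L^p$ norm of $b^\alpha$ is conserved; in particular $\|b^\alpha(t)\|_\infty=\|b_0\|_\infty$. Testing the vorticity equation \eqref{meAlpha}, rewritten as $\partial_t\omega^\alpha+\bu^\alpha\cdot\nabla\omega^\alpha=(\bu^\alpha-\bu_h)\cdot\nabla b^\alpha$, against $\omega^\alpha$ and using $\mathrm{div}\,\bu^\alpha=0$ gives $\tfrac{\dd}{\dd t}\|\omega^\alpha\|_2^2\lesssim\|\bu^\alpha-\bu_h\|_2\,\|\nabla b^\alpha\|_\infty\,\|\omega^\alpha\|_2$, and since $\|\bu^\alpha\|_2\lesssim\|\omega^\alpha\|_2+\|f\|_2$ by \eqref{lem:MasterEst}, Gr\"onwall's lemma and the assumed finiteness of $\int_0^{T_{\max}^\alpha}\|\nabla b^\alpha\|_\infty\dt$ bound $M:=\sup_{t<T_{\max}^\alpha}\|\omega^\alpha(t)\|_2$. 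Feeding $M$ through the three-derivative gain yields the uniform bounds $\|\bu^\alpha\|_{3,2}\lesssim_\alpha M$ and $\|\nabla\bu^\alpha\|_\infty\lesssim_\alpha M$ (via $W^{3,2}\hookrightarrow W^{1,\infty}$ on $\mathbb T^2$), which serve as the bounded coefficients below.

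Next I would run the top-order energy estimate. For the buoyancy I apply $\partial^\beta$ for $|\beta|\le 3$ to \eqref{ceAlpha}, test with $\partial^\beta b^\alpha$, sum, and use the commutator estimate (the $k=3$ analogue of \eqref{E6}) in the form $\|\partial^\beta(\bu^\alpha\cdot\nabla b^\alpha)-\bu^\alpha\cdot\nabla\partial^\beta b^\alpha\|_2\lesssim\|\nabla\bu^\alpha\|_\infty\|b^\alpha\|_{3,2}+\|\bu^\alpha\|_{3,2}\|\nabla b^\alpha\|_\infty$; here both coefficients $\|\nabla\bu^\alpha\|_\infty,\|\bu^\alpha\|_{3,2}\lesssim_\alpha M$ are \emph{already bounded}, which is exactly where the $\alpha$-gain prevents a supercritical term. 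For the vorticity I apply $\partial^\beta$ ($|\beta|\le 2$), test with $\partial^\beta\omega^\alpha$, and treat the commutator by expanding the Leibniz sum: the genuinely dangerous top piece $\partial^2\bu^\alpha\cdot\nabla\omega^\alpha$ is estimated by $L^4\times L^4$ H\"older rather than $L^\infty\times L^2$, using $\|\partial^2\bu^\alpha\|_4\lesssim_\alpha\|\bu^\alpha\|_{3,2}\lesssim_\alpha M$ (with $W^{3,2}\hookrightarrow W^{2,4}$) and $\|\nabla\omega^\alpha\|_4\lesssim\|\omega^\alpha\|_{2,2}$, which again gives a bounded coefficient times the energy and avoids the uncontrollable $\|\nabla\omega^\alpha\|_\infty$. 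The forcing $(\bu^\alpha-\bu_h)\cdot\nabla b^\alpha$ is handled by a Moser product estimate, splitting its $\|\cdot\|_{2,2}$ norm into $\|\bu^\alpha-\bu_h\|_{2,2}\|\nabla b^\alpha\|_\infty\lesssim_\alpha M\|\nabla b^\alpha\|_\infty$ and $\|\bu^\alpha-\bu_h\|_\infty\|\nabla b^\alpha\|_{2,2}\lesssim_\alpha M\|b^\alpha\|_{3,2}$. Combining and applying Young's inequality yields
\[
\tfrac{\dd}{\dd t}E\lesssim_\alpha\big(c+\|\nabla b^\alpha\|_\infty\big)\big(1+E\big),
\]
where $c=c(M,\|b_0\|_\infty,\|\bu_h\|_{3,2},\|f\|_{2,2})$.

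Finally, Gr\"onwall's lemma gives $1+E(t)\le\big(1+E(0)\big)\exp\!\big(c\,T_{\max}^\alpha+c\int_0^{T_{\max}^\alpha}\|\nabla b^\alpha\|_\infty\dt\big)$, which is finite by hypothesis; hence $\limsup_{t\uparrow T_{\max}^\alpha}\|(b^\alpha,\omega^\alpha)(t)\|_{\mathcal M}^2<\infty$, contradicting \eqref{limsupSol}, so $T_{\max}^\alpha<\infty$ forces $\int_0^{T_{\max}^\alpha}\|\nabla b^\alpha\|_\infty\dt=\infty$. I expect the main obstacle to be exactly the top-order vorticity commutator $\partial^2\bu^\alpha\cdot\nabla\omega^\alpha$: a naive estimate turns it into either $\|\nabla\omega^\alpha\|_\infty$ (not controlled by the energy) or a supercritical factor $\|\omega^\alpha\|_{2,2}^2$ that would make the differential inequality blow up on its own. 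The resolution---and the reason the criterion needs only $\|\nabla b^\alpha\|_\infty$, not also $\|\nabla\bu^\alpha\|_\infty$---is the three-derivative smoothing of the regularization, which lets all high derivatives rest on the smooth velocity $\bu^\alpha$ (controlled by $M=\sup\|\omega^\alpha\|_2$), leaving the transport-limited buoyancy gradient as the single quantity whose space-$L^\infty$, time-$L^1$ norm must be assumed finite. A routine point to verify is that these are a priori estimates valid on each $[0,T_n]$ and then extended to $[0,T_{\max}^\alpha)$.
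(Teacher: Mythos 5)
Your proposal is correct and follows essentially the same route as the paper's proof: contradiction via the maximal-solution blow-up characterization, a Gr\"onwall bound on $\sup_t\Vert\omega^\alpha\Vert_2$ obtained by testing the vorticity equation against $\omega^\alpha$ under the assumed integrability of $\Vert\nabla b^\alpha\Vert_\infty$, the $\alpha$-induced three-derivative smoothing (the paper phrases it as the Fourier multiplier bound $m^\alpha(\xi)\lesssim_\alpha 1$) to control $\Vert\bu^\alpha\Vert_{3,2}$ and hence $\Vert\nabla\bu^\alpha\Vert_\infty$ by $\Vert\omega^\alpha\Vert_2$, and then top-order commutator estimates closed by Gr\"onwall. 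Even your $L^4\times L^4$ treatment of the dangerous commutator piece matches the paper's use of $W^{3,2}\hookrightarrow W^{2,4}$ in its estimate for $R_2$.
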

\begin{proof}
First of all, fix $\alpha>0$ and let $T_{\max}^\alpha >0$ be  the maximal time so that
\begin{align}
\label{blowupTimeOfSol}
\limsup_{T_n\rightarrow T_{\max}^\alpha} \Vert (b^\alpha,\omega^\alpha)(T_n) \Vert_{\mathcal{M}}^2 =\infty.
\end{align} 
We now suppose that
\begin{align}
\label{mnot}
\int_0^{T_{\max}^\alpha} \Vert \nabla b^\alpha\Vert_{\infty} \dt = K<\infty
\end{align}
and show that
\begin{align}
\label{trueEstToBeContra}
\Vert (b^\alpha,\omega^\alpha)(t) \Vert_{\mathcal{M}}^2 \lesssim 1, \qquad t<T_{\max}^\alpha
\end{align}
holds yielding a contradiction to \eqref{blowupTimeOfSol}.
\\
Before we show the estimate \eqref{trueEstToBeContra}, we first require preliminary estimates for $\Vert\nabla \bu^\alpha\Vert_\infty$ and $\Vert \omega^\alpha\Vert_2$. To obtain these estimates, we note that the Fourier multiplier $m^\alpha(\xi)$  defined below satisfies the bound
\begin{align}
\label{multiplier}
m^\alpha(\xi):=\frac{(1+\vert\xi\vert^2)\vert\xi\vert^2}{(1+\alpha\vert\xi\vert^2)^2} \lesssim_\alpha 1
\end{align}
for all fixed $\alpha>0$ and in particular, the bound may be taken uniformly of all $\alpha\geq1$. Due to \eqref{multiplier} and the continuous embedding $W^{3,2}(\mathbb{T}^2)\hookrightarrow W^{1,\infty}(\mathbb{T}^2)$, we can conclude that
\begin{align}
\label{uInfty}
\Vert \nabla \bu^\alpha \Vert_\infty \lesssim \Vert \bu^\alpha\Vert_{3,2}\lesssim_\alpha \Vert \omega^\alpha - f \Vert_2 \lesssim \Vert \omega^\alpha - f \Vert_{2,2}.
\end{align}
On the other hand, if we test \eqref{meAlpha} with $\omega^\alpha$ and
use \eqref{lem:MasterEst} for $w=(1-\alpha\Delta)^{-1}(\omega^\alpha-f)$ and $k=0$,
we obtain
\begin{equation}
\begin{aligned}
\label{o2est}
\frac{\dd}{\dd t}
\Vert  \omega^\alpha \Vert_{2}^2 
&\lesssim
\Big(
\Vert \bu^\alpha \Vert_{2}\Vert \omega^\alpha \Vert_{2}
+
\Vert \bu_h \Vert_{2} \Vert \omega^\alpha \Vert_{2}
\Big) 
\Vert \nabla  b^\alpha \Vert_{\infty}
\lesssim
\Big(
1
+
 \Vert \omega^\alpha \Vert_{2}^2
\Big) 
\Vert \nabla  b^\alpha \Vert_{\infty}
\end{aligned}
\end{equation}
for a constant depending only on $\Vert\bu_h\Vert_2$ and $\Vert f \Vert_2$. It therefore follow from \eqref{o2est} and \eqref{mnot} that
\begin{align}
\label{o2estTime}
\Vert \omega^\alpha(t) \Vert_2^2\lesssim \big(1+ \Vert\omega_0\Vert_2^2 \big)\exp(cK), \qquad t<T_{\max}^\alpha
\end{align}
which when combined with the first estimate in \eqref{uInfty} yields
\begin{align}
\label{uInfty1}
\Vert \nabla \bu^\alpha(t) \Vert_\infty \lesssim_\alpha 1+\big(1+ \Vert\omega_0\Vert_2^2 \big)\exp(cK)
\lesssim_\alpha \big(1+ \Vert\omega_0\Vert_2^2 \big)\exp(cK), \qquad t<T_{\max}^\alpha.
\end{align}
Recall that $f\in W^{2,2}(\mathbb{T}^2)$ by assumption.  
\\
With these preliminary estimates in hand, we can proceed to derive estimates for $(b^\alpha,\omega^\alpha)$ in the space $\mathcal{M}$ of existence of a solution.
Since the space of smooth functions is dense in $\mathcal{M}$, it suffices to show our result for a smooth solution pair $(b^\alpha,\omega^\alpha)$.
\\
To achieve our goal, we apply $\partial^\beta $ to \eqref{ceAlpha} for $\vert \beta\vert \leq3$ to obtain
\begin{align}
\frac{\partial}{\partial t}\partial^\beta b^\alpha + \bu^\alpha \cdot \nabla\partial^\beta  b^\alpha  = R_1
\label{ce1}
\end{align} 
where 
\begin{align*}
R_1:= \bu^\alpha \cdot \partial^\beta  \nabla b^\alpha
-
\partial^\beta(\bu^\alpha \cdot \nabla  b^\alpha ).
\end{align*}
Now since $\mathrm{div}\bu^\alpha =0$, if we multiply \eqref{ce1} by $\partial^\beta b^\alpha$ and sum over the multiindex $\beta$ so that $\vert \beta\vert \leq3$,  we obtain 
\begin{equation}
\begin{aligned}
\label{b32est}
\frac{\dd}{\dd t}
\Vert  b^\alpha \Vert_{3,2}^2 
&\lesssim
\Big(
\Vert \nabla \bu^\alpha \Vert_{\infty}\Vert b^\alpha \Vert_{3,2}
+
\Vert \nabla b^\alpha \Vert_{\infty} \Vert \bu^\alpha \Vert_{3,2}
\Big) 
\Vert  b^\alpha \Vert_{3,2}
\\&
\lesssim
(\Vert \nabla \bu^\alpha \Vert_{\infty}
+
\Vert \nabla b^\alpha \Vert_{\infty})(1+\Vert (b^\alpha, \omega^\alpha)  \Vert_{\mathcal{M}}^2)
\end{aligned}
\end{equation} 
where we have used \eqref{lem:MasterEst} for $w=(1-\alpha\Delta)^{-1}(\omega^\alpha-f)$ and $k=2$.
\\
Next, we find a bound for $\Vert \omega^\alpha \Vert^2_{2,2}$. For this, we apply $\partial^\beta $ to \eqref{meAlpha} for $\vert \beta \vert \leq2$ and we obtain
\begin{align}
\frac{\partial}{\partial t}\partial^\beta\omega^\alpha + \bu^\alpha\cdot \nabla\partial^\beta( \omega^\alpha-b^\alpha) +\bu_h \cdot \nabla\partial^\beta b^\alpha =  R_2 + R_3+R_4
\label{me1}
\end{align} 
where 
\begin{align*}
R_2&:= \bu^\alpha\cdot \partial^\beta\nabla \omega^\alpha
-
\partial^\beta(\bu^\alpha \cdot \nabla  \omega^\alpha ),
\\
R_3&:= -\bu^\alpha\cdot \partial^\beta\nabla b^\alpha
+
\partial^\beta(\bu^\alpha \cdot \nabla  b^\alpha ),
\\
R_4&:= \bu_h\cdot \partial^\beta\nabla b^\alpha
-
\partial^\beta(\bu_h \cdot \nabla  b^\alpha )
\end{align*}
are such that
\begin{align}
\big\vert \big\langle R_2 \, ,\,\partial^\beta\omega^\alpha \big\rangle
 \big\vert \lesssim
\Vert R_2 \Vert_2 \Vert \omega^\alpha \Vert_{2,2}&\lesssim 
 \Big(\Vert \nabla \bu^\alpha \Vert_{\infty}\Vert \omega^\alpha \Vert_{2,2}
+\Vert \omega^\alpha \Vert_{2,2} (1+\Vert \omega^\alpha \Vert_{2}) \Big)\Vert \omega^\alpha \Vert_{2,2} \label{estR2}
,
\\
\big\vert \big\langle R_3 \, ,\,\partial^\beta\omega^\alpha \big\rangle
 \big\vert \lesssim
\Vert R_3 \Vert_2 \Vert \omega^\alpha \Vert_{2,2}&\lesssim 
\Big(\Vert \nabla \bu^\alpha \Vert_{\infty} \Vert b^\alpha \Vert_{3,2}
+
\Vert \nabla b^\alpha \Vert_{\infty} (1+\Vert \omega^\alpha \Vert_{2,2})
\Big)\Vert \omega^\alpha \Vert_{2,2},\label{estR3}
\\
\big\vert \big\langle R_4 \, ,\,\partial^\beta\omega^\alpha \big\rangle
 \big\vert \lesssim
\Vert R_4 \Vert_2 \Vert \omega^\alpha \Vert_{2,2}&\lesssim 
\Big(\Vert b^\alpha \Vert_{3,2}
+
\Vert \nabla b^\alpha \Vert_{\infty}\Big)\Vert \omega^\alpha \Vert_{2,2} \label{estR4}
\end{align}
holds true. Recall that $\bu_h \in W^{3,2}(\mathbb{T}^2)$ by assumption and note that we have used the continuous embedding $W^{3,2}(\mathbb{T}^2)\hookrightarrow W^{2,4}(\mathbb{T}^2)$ and the second inequality in \eqref{uInfty} in order to obtain the term $1+\Vert\omega^\alpha\Vert_2$ for the estimates \eqref{estR2}. 
Next, by using  $\mathrm{div}\bu^\alpha =0$, we have the following identity
\begin{align}
 \big\langle(\bu^\alpha\cdot \nabla\partial^\beta \omega^\alpha)\, ,\,\partial^\beta \omega^\alpha \big\rangle=\frac{1}{2}\int_{\mathbb{T}^2}\mathrm{div}(\bu^\alpha\vert\partial^\beta \omega^\alpha\vert^2)\dd x=0.
\end{align}
Additionally, the following estimates for $L^2$ inner products holds true
\begin{align}
\Big\vert \big\langle ( \bu^\alpha\cdot \nabla\partial^\beta b^\alpha ) \, ,\,\partial^\beta\omega^\alpha \big\rangle \Big\vert
&\lesssim
\Vert \bu^\alpha \Vert_\infty \Vert b^\alpha\Vert_{3,2}^2
+
\Vert \bu^\alpha \Vert_\infty 
\Vert \omega^\alpha\Vert_{2,2}^2,
\\
\Big\vert \big\langle ( \bu_h\cdot \nabla\partial^\beta b^\alpha ) \, ,\,\partial^\beta \omega^\alpha \big\rangle \Big\vert
&\lesssim
\Vert b^\alpha\Vert_{3,2}^2 
+ \Vert \omega^\alpha\Vert_{2,2}^2.
\end{align}
since $\bu_h \in W^{3,2}(\mathbb{T}^2)$. 
If we now collect the estimates above (keeping in mind that $f\in W^{2,2}(\mathbb{T}^2)$ and $\bu_h \in W^{3,2}(\mathbb{T}^2)$), we obtain by multiplying \eqref{me1} by $\partial^\beta \omega^\alpha$ with $\vert \beta\vert \leq2$ and then summing over $\vert \beta\vert \leq2$, the following
\begin{equation}
\begin{aligned}
\label{omeg22}
 \frac{\dd}{\dd t} \Vert \omega^\alpha \Vert_{2,2}^2
&\lesssim
\big( 1+\Vert \bu^\alpha \Vert_{1,\infty}
+
\Vert \nabla b^\alpha \Vert_{\infty} + \Vert\omega^\alpha \Vert_2 \big)\big(1+\Vert (b^\alpha, \omega^\alpha)  \Vert_{\mathcal{M}}^2\big).
\end{aligned}
\end{equation}
Summing up \eqref{b32est} and \eqref{omeg22} and using \eqref{o2estTime}--\eqref{uInfty} yields
\begin{equation}
\begin{aligned}
\label{rewriteB0}
\frac{\dd}{\dd t}
(1+\Vert (b^\alpha, \omega^\alpha)  \Vert_{\mathcal{M}}^2)
&
\lesssim_\alpha
\Big(\big(1+ \Vert\omega_0\Vert_2^2 \big)\exp(cK)
+
\Vert \nabla b^\alpha \Vert_{\infty}\Big)(1+\Vert (b^\alpha, \omega^\alpha)  \Vert_{\mathcal{M}}^2)
\end{aligned}
\end{equation} 
so that by Gr\"onwall's lemma and  \eqref{mnot}, we obtain
\begin{equation}
\begin{aligned}
\Vert (b^\alpha,\omega^\alpha)(t) \Vert_{\mathcal{M}}^2 &\leq \big[ 1 +\Vert (b_0, \omega_0) \Vert_{\mathcal{M}}^2\big]
\exp\Big(c(\alpha)T_{\max}\big(1+ \Vert\omega_0\Vert_2^2 \big)\exp(cK)
+
c(\alpha)K\Big)
\\&
\lesssim_{T_{\max}^\alpha,K,f,\bu_h,\omega_0,b_0,\alpha}1
\end{aligned}
\end{equation}
for all $t<{T_{\max}^\alpha}$ contradicting \eqref{blowupTimeOfSol}.
\end{proof}
\begin{remark}[Global existence for constant-in-space buoyancy]
We note that for the $\alpha$-TQG model,
when the buoyancy is constant in space so that $\nabla b^\alpha=0$, then the equation for the buoyancy decouples from that of the potential vorticity. The potential vorticity equation then reduces to the $2$-dimensional Euler equation albeit a different Biot–Savart law. Nevertheless, by inspecting the proof of Theorem \ref{thm:BKM} above, one observes that the same global-in-time result applies. In particular, the residual estimate \eqref{estR2} still holds true and we obtain from this, a refine version of \eqref{o2estTime}-\eqref{uInfty1} where $\exp(c K)=1$. Recall from \eqref{mnot} that $K=0$ when $\nabla b^\alpha=0$.
\end{remark}

\begin{remark}[Global existence for super diffusive buoyancy]
Fix $\alpha>0$. Note that by adding any super diffusive term $\Lambda^\beta b^\alpha$, $\beta\geq1$ to the right-hand side of \eqref{ceAlpha} so that
\begin{align*}
    \int_0^t \Vert \nabla b^\alpha(s) \Vert_\infty \dd s \lesssim 
    \int_0^t\Vert \Lambda^{\beta/2} b^\alpha(s) \Vert_2^2 \dd s
\end{align*}
holds for all $t<T_{\max}^\alpha$, then we obtain a global solution since in this case, we obtain the energy estimate
\begin{align*}
    \Vert b^\alpha (t) \Vert_2^2
    +
    \int_0^t\Vert \Lambda^{\beta/2} b^\alpha(s) \Vert_2^2 \dd s
    \leq \Vert b_0\Vert_2 \leq \Vert b_0\Vert_{3,2}
\end{align*}
for all $t<T_{\max}^\alpha$.
\end{remark}
Next, we also give an alternative to the blowup condition in Theorem \ref{thm:BKM} above.
\begin{theorem}
\label{thm:BKM1}
Fix $\alpha>0$. Under Assumption \ref{dataAllThroughout}, let $(b^\alpha, \omega^\alpha,  T_{\max}^\alpha) $ be
a  \textit{maximal solution} of   \eqref{ceAlpha}--\eqref{constrtAlpha}. If $T_{\max}^\alpha<\infty$, then
\begin{align*}
\int_0^{T_{\max}^\alpha} \Vert \nabla \bu^\alpha\Vert_{\infty}\dt = \infty.
\end{align*}
\end{theorem}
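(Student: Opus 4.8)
The plan is to reduce Theorem \ref{thm:BKM1} to the already-established Theorem \ref{thm:BKM} by propagating the $L^\infty$-bound on the buoyancy gradient using only the velocity gradient. I argue by contradiction: suppose that $T_{\max}^\alpha<\infty$ and that, contrary to the claim,
\begin{align*}
\int_0^{T_{\max}^\alpha} \Vert \nabla \bu^\alpha \Vert_\infty \dt = K <\infty.
\end{align*}
The objective is then to deduce $\int_0^{T_{\max}^\alpha} \Vert \nabla b^\alpha \Vert_\infty \dt<\infty$, which directly contradicts the conclusion of Theorem \ref{thm:BKM}.

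The crucial step is a transport estimate for $\nabla b^\alpha$. Since $b^\alpha$ solves the pure transport equation \eqref{ceAlpha} with divergence-free velocity $\bu^\alpha$, differentiating in space gives, for each component, $\partial_t \partial_j b^\alpha + \bu^\alpha\cdot\nabla \partial_j b^\alpha = -\partial_j \bu^\alpha \cdot \nabla b^\alpha$. As in the proof of Theorem \ref{thm:BKM}, it suffices to work with smooth solutions by density. Testing this identity with $\vert \nabla b^\alpha\vert^{p-2}\nabla b^\alpha$ and using $\mathrm{div}\,\bu^\alpha=0$ to annihilate the convective term, Hölder's inequality yields $\frac{\dd}{\dd t}\Vert \nabla b^\alpha\Vert_p \lesssim \Vert \nabla \bu^\alpha\Vert_\infty \Vert \nabla b^\alpha\Vert_p$ with a constant independent of $p$. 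Grönwall's lemma followed by $p\rightarrow\infty$ (equivalently, integrating $\vert\nabla b^\alpha\vert$ along the Lagrangian flow of $\bu^\alpha$) then gives
\begin{align*}
\Vert \nabla b^\alpha(t)\Vert_\infty
\leq
\Vert \nabla b_0\Vert_\infty \exp\Big(\int_0^t \Vert \nabla \bu^\alpha(s)\Vert_\infty \dd s\Big)
\leq
\Vert \nabla b_0\Vert_\infty \, e^{K}, \qquad t<T_{\max}^\alpha.
\end{align*}
Because $b_0\in W^{3,2}(\mathbb{T}^2)\hookrightarrow W^{1,\infty}(\mathbb{T}^2)$, the quantity $\Vert \nabla b_0\Vert_\infty$ is finite, so the right-hand side is a finite constant uniform in $t$.

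Integrating this bound over $[0,T_{\max}^\alpha)$ and using $T_{\max}^\alpha<\infty$ produces
\begin{align*}
\int_0^{T_{\max}^\alpha} \Vert \nabla b^\alpha\Vert_\infty \dt \leq T_{\max}^\alpha \, \Vert \nabla b_0\Vert_\infty \, e^{K} <\infty,
\end{align*}
which contradicts Theorem \ref{thm:BKM}. Hence the assumption $K<\infty$ is untenable and the integral of $\Vert \nabla\bu^\alpha\Vert_\infty$ must diverge. The only genuine obstacle is the transport estimate itself; it is standard, but it relies essentially on the incompressibility of $\bu^\alpha$ to kill the convective term and on the embedding $W^{3,2}\hookrightarrow W^{1,\infty}$ to control the initial buoyancy gradient. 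Once these are in place, the reduction to Theorem \ref{thm:BKM} is immediate, and no fresh energy estimates in $\mathcal{M}$ are required.
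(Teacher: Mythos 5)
Your argument is correct, and its core ingredient coincides exactly with the paper's: the $L^p$ transport estimate $\frac{\dd}{\dd t}\Vert \nabla b^\alpha\Vert_p \leq \Vert \nabla \bu^\alpha\Vert_\infty \Vert \nabla b^\alpha\Vert_p$ with a constant uniform in $p$, followed by the limit $p\to\infty$ to obtain $\Vert \nabla b^\alpha(t)\Vert_\infty \lesssim \Vert \nabla b_0\Vert_\infty \exp(K)$ on $[0,T_{\max}^\alpha)$. Where you diverge is in how the proof is closed. You integrate this bound over the finite interval and invoke Theorem \ref{thm:BKM} as a black box: finiteness of $\int_0^{T_{\max}^\alpha}\Vert\nabla b^\alpha\Vert_\infty\dt$ together with $T_{\max}^\alpha<\infty$ contradicts that theorem, and you are done. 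The paper instead does not cite Theorem \ref{thm:BKM}; it re-runs the energy machinery, feeding the pointwise bound on $\Vert\nabla b^\alpha\Vert_\infty$ into the estimate \eqref{o2est} to control $\Vert\omega^\alpha\Vert_2$, and then into \eqref{b32est} and \eqref{omeg22} to obtain via Gr\"onwall an explicit uniform bound on $\Vert(b^\alpha,\omega^\alpha)(t)\Vert_{\mathcal{M}}^2$, contradicting the blowup characterization \eqref{blowupTimeOfSol1} directly. Your route is logically valid and strictly shorter, since the hypotheses of Theorem \ref{thm:BKM} (maximal solution, $T_{\max}^\alpha<\infty$) are exactly those of Theorem \ref{thm:BKM1}; what the paper's longer closure buys is a self-contained quantitative bound on the $\mathcal{M}$-norm in terms of $K$, $T_{\max}^\alpha$ and the data, rather than a pure contradiction. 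One minor point to make explicit in your write-up: the passage from the $L^p$ bound to the $L^\infty$ bound uses either $\lim_{p\to\infty}\Vert\nabla b_0\Vert_p=\Vert\nabla b_0\Vert_\infty$ on the finite-measure torus or, as the paper does, the Sobolev embedding $W^{3,2}(\mathbb{T}^2)\hookrightarrow W^{1,\infty}(\mathbb{T}^2)$ to replace $\Vert\nabla b_0\Vert_p$ by $\Vert b_0\Vert_{3,2}$ uniformly in $p$; either is fine, but the uniformity in $p$ of the constant must be stated, as you do.
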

\begin{proof}
In analogy with the proof of Theorem \ref{thm:BKM}, we fix $\alpha>0$, let $T_{\max}^\alpha >0$ be  the maximal time so that
\begin{align}
\label{blowupTimeOfSol1}
\limsup_{T_n\rightarrow T_{\max}^\alpha} \Vert (b^\alpha,\omega^\alpha)(T_n) \Vert_{\mathcal{M}}^2 =\infty.
\end{align} 
We now suppose that
\begin{align}
\label{mnot1}
\int_0^{T_{\max}^\alpha} \Vert \nabla \bu^\alpha\Vert_{\infty} \dt = K<\infty
\end{align}
and show that
\begin{align}
\label{trueEstToBeContra1}
\Vert (b^\alpha,\omega^\alpha)(t) \Vert_{\mathcal{M}}^2 \lesssim 1, \qquad t<T_{\max}^\alpha
\end{align}
holds yielding a contradiction to \eqref{blowupTimeOfSol1}.
\\
To show this, we first fix $\alpha>0$. If we differentiate \eqref{ceAlpha} in space, multiply the resulting equation by $p\vert \nabla b^\alpha \vert^{p-2}\nabla b^\alpha$ where $p>1$ is fixed and finite in this instant, and then integrate over $\mathbb{T}^2$, we obtain
\begin{equation}
\begin{aligned}
\frac{\dd}{\dt}\Vert \nabla b^\alpha\Vert_p^p \leq p \Vert \nabla \bu^\alpha\Vert_\infty \Vert \nabla b^\alpha\Vert_p^p.
\end{aligned}
\end{equation}
Rather than use Gr\"onwall's lemma at this point, we use the analogous separation of variable technique to obtain
\begin{align*}
\frac{1}{p}\ln \Vert \nabla b^\alpha(t)\Vert_p^p 
\leq 
\frac{1}{p}\ln \Vert \nabla b_0\Vert_p^p
+
\int_0^t \Vert \nabla \bu^\alpha(s)\Vert_\infty \dd s
\end{align*}
so that
\begin{align*}
\Vert \nabla b^\alpha(t)\Vert_p 
\leq 
 \Vert \nabla b_0\Vert_p
 \exp\bigg(\int_0^t \Vert \nabla \bu^\alpha(s)\Vert_\infty \dd s \bigg)
\end{align*}
holds for all $t< T_{\max}^\alpha$ uniformly in $p>1$. Since $b_0 \in W^{3,2}(\mathbb{T}^2)$, by Sobolev embedding and \eqref{mnot1}, we obtain
\begin{align}
\label{balphaUni}
\Vert \nabla b^\alpha(t)\Vert_\infty 
\lesssim 
 \Vert  b_0\Vert_{3,2}
 \exp(K )
\end{align}
for all $t< T_{\max}^\alpha$ so that
\begin{align*}
\int_0^{T_{\max}^\alpha}\Vert \nabla b^\alpha(t)\Vert_\infty \dt
\leq c\,T_{\max}^\alpha 
 \Vert  b_0\Vert_{3,2}
 \exp(K ).
\end{align*}
We can therefore deduce from the estimate \eqref{o2est} that
\begin{align}
\label{o2estTime1}
\Vert \omega^\alpha(t) \Vert_2^2\lesssim \big(1+ \Vert\omega_0\Vert_2^2 \big)\exp \big( cT_{\max}^\alpha 
 \Vert  b_0\Vert_{3,2}
 \exp(K) \big), \qquad t<T_{\max}^\alpha.
\end{align}
Summing up \eqref{b32est} and \eqref{omeg22} and using \eqref{balphaUni}--\eqref{o2estTime1} yields
\begin{equation}
\begin{aligned}
\label{rewriteB0}
\frac{\dd}{\dd t}
(1+\Vert (b^\alpha, \omega^\alpha)  \Vert_{\mathcal{M}}^2)
&
\lesssim
\big(1+\Vert \bu^\alpha \Vert_{1,\infty}+\mathfrak{A}\big)
(1+\Vert (b^\alpha, \omega^\alpha)  \Vert_{\mathcal{M}}^2)
\end{aligned}
\end{equation} 
where
\begin{align*}
\mathfrak{A}
:=
 \Vert  b_0\Vert_{3,2}
 \exp(K )
+
\big(1+ \Vert\omega_0\Vert_2^2 \big)\exp \big(cT_{\max}^\alpha 
 \Vert  b_0\Vert_{3,2} \exp(K) 
 \big).
\end{align*}
We can therefore conclude from Gr\"onwall's lemma and  \eqref{mnot1} that
\begin{equation}
\begin{aligned}
\Vert (b^\alpha,\omega^\alpha)(t) \Vert_{\mathcal{M}}^2 &\leq \big[ 1 +\Vert (b_0, \omega_0) \Vert_{\mathcal{M}}^2\big]
\exp\Big(cT_{\max}^\alpha+cK + cT_{\max}^\alpha \mathfrak{A}\Big)
\\&
\lesssim_{T_{\max}^\alpha,K,f,\bu_h,\omega_0,b_0}1
\end{aligned}
\end{equation}
for all $t<{T_{\max}^\alpha}$ contradicting \eqref{blowupTimeOfSol1}.
\end{proof}
In order the give the final blowup criterion,
let us now recall the following endpoint Sobolev inequality by Br\'{e}zis and Gallouet \cite{brezis1980nonlinear}.
\begin{lemma}
\label{lem:brezis}
If $f\in W^{2,2}(\mathbb{T}^2)$   then
\begin{align*}
\Vert f \Vert_\infty \lesssim (1+ \Vert f \Vert_{1,2})\sqrt{\ln(\mathrm{e}+ \Vert f \Vert_{2,2})}.
\end{align*}
\end{lemma}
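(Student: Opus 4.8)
The plan is to carry out the standard Brézis--Gallouet Fourier-splitting argument directly on $\mathbb{T}^2$, exploiting the fact that the logarithm arises precisely from the borderline failure of the embedding $W^{1,2}(\mathbb{T}^2)\hookrightarrow L^\infty(\mathbb{T}^2)$ in two dimensions. Since $f\in W^{2,2}(\mathbb{T}^2)$ and $2>d/2=1$, the Fourier series of $f$ converges absolutely: by Cauchy--Schwarz with the weight $(1+\vert\xi\vert^2)^{-1}$ one has $\sum_{\xi}\vert\widehat f(\xi)\vert\leq \Vert f\Vert_{2,2}\big(\sum_{\xi}(1+\vert\xi\vert^2)^{-2}\big)^{1/2}<\infty$, the last sum being finite in two dimensions. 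Hence $\Vert f\Vert_\infty\leq \sum_{\xi\in\mathbb Z^2}\vert\widehat f(\xi)\vert$, and the whole problem reduces to estimating this $\ell^1$ norm of the Fourier coefficients.

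First I would fix a cutoff $N>0$ and split the sum into the low-frequency block $\vert\xi\vert\leq N$ and the high-frequency tail $\vert\xi\vert>N$. On the low-frequency block, insert the weight $(1+\vert\xi\vert^2)^{1/2}$ and apply Cauchy--Schwarz,
\begin{align*}
\sum_{\vert\xi\vert\leq N}\vert\widehat f(\xi)\vert
\leq
\Big(\sum_{\vert\xi\vert\leq N}(1+\vert\xi\vert^2)\vert\widehat f(\xi)\vert^2\Big)^{1/2}
\Big(\sum_{\vert\xi\vert\leq N}(1+\vert\xi\vert^2)^{-1}\Big)^{1/2}
\leq
\Vert f\Vert_{1,2}\Big(\sum_{\vert\xi\vert\leq N}(1+\vert\xi\vert^2)^{-1}\Big)^{1/2},
\end{align*}
and compare the last sum with the integral $\int_{\vert\zeta\vert\leq N}(1+\vert\zeta\vert^2)^{-1}\dd\zeta\lesssim\ln(\mathrm{e}+N)$, which is exactly where the logarithmic gain appears. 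On the high-frequency tail, insert the weight $(1+\vert\xi\vert^2)$ instead and again apply Cauchy--Schwarz,
\begin{align*}
\sum_{\vert\xi\vert>N}\vert\widehat f(\xi)\vert
\leq
\Vert f\Vert_{2,2}\Big(\sum_{\vert\xi\vert>N}(1+\vert\xi\vert^2)^{-2}\Big)^{1/2}
\lesssim
\Vert f\Vert_{2,2}\,N^{-1},
\end{align*}
where the tail sum is compared with $\int_{\vert\zeta\vert>N}(1+\vert\zeta\vert^2)^{-2}\dd\zeta\lesssim N^{-2}$.

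Combining the two blocks gives $\Vert f\Vert_\infty\lesssim \Vert f\Vert_{1,2}\sqrt{\ln(\mathrm{e}+N)}+\Vert f\Vert_{2,2}\,N^{-1}$ for every $N>0$. The final step is to optimize the free parameter by choosing $N=\mathrm{e}+\Vert f\Vert_{2,2}$: this makes the high-frequency term $\lesssim 1$ and turns the low-frequency term into $\Vert f\Vert_{1,2}\sqrt{\ln(\mathrm{e}+\Vert f\Vert_{2,2})}$, whence
\begin{align*}
\Vert f\Vert_\infty\lesssim 1+\Vert f\Vert_{1,2}\sqrt{\ln(\mathrm{e}+\Vert f\Vert_{2,2})}.
\end{align*}
Since $\sqrt{\ln(\mathrm{e}+\Vert f\Vert_{2,2})}\geq 1$, the additive constant is absorbed, producing the stated bound with the factor $(1+\Vert f\Vert_{1,2})$. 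I do not expect a serious obstacle: the only genuinely delicate point is the two-dimensional count showing that the low-frequency weight sum grows like a logarithm rather than a power of $N$ — this borderline behaviour is the entire content of the endpoint inequality — together with the correct balancing choice $N\sim \mathrm{e}+\Vert f\Vert_{2,2}$.
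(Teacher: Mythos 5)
Your proof is correct. The paper itself does not prove this lemma: it states it, cites Br\'ezis--Gallouet for the original version (which assumed $\Vert f\Vert_{1,2}\leq 1$), and remarks that the form for arbitrary $\Vert f\Vert_{1,2}$ ``follows immediately'' from two further references. You instead supply the standard self-contained frequency-splitting argument on $\mathbb{T}^2$: split the absolutely convergent Fourier series at a cutoff $N$, use Cauchy--Schwarz with weight $(1+\vert\xi\vert^2)^{1/2}$ on the low block (where the two-dimensional lattice count gives the logarithm) and with weight $(1+\vert\xi\vert^2)$ on the tail (giving the $N^{-1}$ decay), then choose $N=\mathrm{e}+\Vert f\Vert_{2,2}$. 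All the estimates check out: the low-frequency weight sum is indeed $O(\ln(\mathrm{e}+N))$ in dimension two, the tail sum is $O(N^{-2})$, and the only cosmetic point is that your choice of $N$ produces $\ln(2\mathrm{e}+\Vert f\Vert_{2,2})$ rather than $\ln(\mathrm{e}+\Vert f\Vert_{2,2})$, which is harmless since the two are comparable up to a multiplicative constant absorbed by $\lesssim$. The trade-off is the usual one: the paper's citation is shorter and defers to the literature, while your argument is fully verifiable in place and makes transparent exactly where the restriction $\Vert f\Vert_{1,2}\leq 1$ in the original statement is unnecessary --- it never enters the splitting at all.
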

\begin{remark} 
Note that the original statement in \cite{brezis1980nonlinear} restricted the size of $\Vert f \Vert_{1,2}$ to being at most one. The current form  for any size of $\Vert f \Vert_{1,2}$ follows immediately as demonstrated in for example \cite{wang2011global, dinvay2020well}.
\end{remark}
With Lemma \ref{lem:brezis} in hand, we can now show the final blowup condition.
\begin{theorem}
\label{thm:BKM3}
Fix $\alpha>0$. Let  $(b^\alpha, \omega^\alpha,T_{\max}^\alpha) $ be 
a  \textit{maximal solution} of   \eqref{ceAlpha}--\eqref{constrtAlpha}. If $T_{\max}^\alpha<\infty$, then
\begin{align*}
\int_0^{T_{\max}^\alpha} \Vert \nabla b^\alpha\Vert_{1,2}\dt = \infty.
\end{align*}
\end{theorem}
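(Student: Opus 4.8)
The plan is to argue by contradiction, following the template of the proofs of Theorems \ref{thm:BKM} and \ref{thm:BKM1}. Suppose $T_{\max}^\alpha<\infty$ but $\int_0^{T_{\max}^\alpha}\Vert\nabla b^\alpha\Vert_{1,2}\dt=K<\infty$. I would aim to show $\Vert(b^\alpha,\omega^\alpha)(t)\Vert_{\mathcal{M}}^2\lesssim 1$ uniformly for $t<T_{\max}^\alpha$, contradicting the breakdown condition \eqref{limsupSol} of the maximal solution. The obstruction to reusing the proof of Theorem \ref{thm:BKM} verbatim is that the energy estimates \eqref{b32est} and \eqref{omeg22} feature $\Vert\nabla b^\alpha\Vert_\infty$, whereas the hypothesis only controls the weaker quantity $\Vert\nabla b^\alpha\Vert_{1,2}$; bridging this gap in two dimensions is precisely the role of the Br\'ezis--Gallouet inequality of Lemma \ref{lem:brezis}.

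First I would secure a uniform bound on $\Vert\omega^\alpha\Vert_2$, but using $\Vert\nabla b^\alpha\Vert_2$ in place of $\Vert\nabla b^\alpha\Vert_\infty$. Testing \eqref{meAlpha} with $\omega^\alpha$ and using $\mathrm{div}\,\bu^\alpha=0$ gives $\tfrac12\tfrac{\dd}{\dt}\Vert\omega^\alpha\Vert_2^2=\langle(\bu^\alpha-\bu_h)\cdot\nabla b^\alpha,\omega^\alpha\rangle$. Rather than distributing the norms as in \eqref{o2est}, I would bound this by $\lesssim(\Vert\bu^\alpha\Vert_\infty+\Vert\bu_h\Vert_\infty)\Vert\nabla b^\alpha\Vert_2\Vert\omega^\alpha\Vert_2$. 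Here the regularization pays off: by \eqref{uInfty} one has $\Vert\bu^\alpha\Vert_\infty\lesssim\Vert\bu^\alpha\Vert_{3,2}\lesssim_\alpha\Vert\omega^\alpha-f\Vert_2\lesssim 1+\Vert\omega^\alpha\Vert_2$. This yields $\tfrac{\dd}{\dt}\Vert\omega^\alpha\Vert_2^2\lesssim_\alpha(1+\Vert\omega^\alpha\Vert_2^2)\Vert\nabla b^\alpha\Vert_2$, and since $\int_0^{T_{\max}^\alpha}\Vert\nabla b^\alpha\Vert_2\dt\le K$, Gr\"onwall's lemma gives $\Vert\omega^\alpha(t)\Vert_2^2\lesssim_\alpha(1+\Vert\omega_0\Vert_2^2)\exp(cK)$ for $t<T_{\max}^\alpha$. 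Feeding this back into \eqref{uInfty} bounds $\Vert\bu^\alpha\Vert_{1,\infty}$ uniformly as well.

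With these quantities controlled, the summed estimate \eqref{b32est}$+$\eqref{omeg22} collapses, for $y(t):=1+\Vert(b^\alpha,\omega^\alpha)(t)\Vert_{\mathcal{M}}^2$, to $\tfrac{\dd}{\dt}y\lesssim_\alpha(C+\Vert\nabla b^\alpha\Vert_\infty)\,y$, with $C$ depending on $\alpha$, $K$ and the data. I would then apply Lemma \ref{lem:brezis} to each component of $\nabla b^\alpha$ and use $\Vert\nabla b^\alpha\Vert_{2,2}\lesssim\Vert b^\alpha\Vert_{3,2}\le y^{1/2}$ to obtain
\[
\Vert\nabla b^\alpha\Vert_\infty\lesssim(1+\Vert\nabla b^\alpha\Vert_{1,2})\sqrt{\ln(\mathrm{e}+\Vert\nabla b^\alpha\Vert_{2,2})}\lesssim(1+\Vert\nabla b^\alpha\Vert_{1,2})\sqrt{\ln(\mathrm{e}+y)},
\]
since $\ln(\mathrm{e}+y^{1/2})\lesssim\ln(\mathrm{e}+y)$. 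As $\sqrt{\ln(\mathrm{e}+y)}\ge 1$, the constant $C$ is absorbed, producing the logarithmic differential inequality $\tfrac{\dd}{\dt}y\lesssim_\alpha(1+\Vert\nabla b^\alpha\Vert_{1,2})\,y\sqrt{\ln(\mathrm{e}+y)}$.

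The decisive step is closing this logarithmic Gr\"onwall. I would introduce $\Psi(y):=2\sqrt{\ln(\mathrm{e}+y)}$, whose derivative is $\Psi'(y)=\bigl((\mathrm{e}+y)\sqrt{\ln(\mathrm{e}+y)}\bigr)^{-1}$; multiplying the inequality by $\Psi'(y)$ cancels the $\sqrt{\ln(\mathrm{e}+y)}$ and one power of $y$, leaving $\tfrac{\dd}{\dt}\Psi(y)\lesssim_\alpha 1+\Vert\nabla b^\alpha\Vert_{1,2}$. Integrating over $[0,t]$ and using $\int_0^{T_{\max}^\alpha}(1+\Vert\nabla b^\alpha\Vert_{1,2})\dt=T_{\max}^\alpha+K<\infty$ bounds $\Psi(y(t))$, hence $\ln(\mathrm{e}+y(t))$, uniformly on $[0,T_{\max}^\alpha)$, giving the desired uniform bound on $y$ and the contradiction. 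I expect the main obstacle to be arranging the argument so that the uniform $\Vert\omega^\alpha\Vert_2$ (and thus $\Vert\bu^\alpha\Vert_{1,\infty}$) bounds are in place \emph{before} invoking Br\'ezis--Gallouet: it is the $\Vert\omega^\alpha\Vert_2$ bound that removes the non-integrable $\Vert\nabla b^\alpha\Vert_\infty$ from the coefficient and leaves the slowly growing $\sqrt{\ln(\mathrm{e}+y)}$ (rather than a genuine power of $y$) as the only obstruction, and this is exactly where the $\alpha$-regularization is essential.
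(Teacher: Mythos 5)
Your proposal is correct and follows essentially the same route as the paper's proof: a contradiction argument in which one first secures a uniform $L^2$ bound on $\omega^\alpha$ (and hence on $\Vert \bu^\alpha\Vert_{1,\infty}$ via the $\alpha$-dependent multiplier bound \eqref{uInfty}) by testing \eqref{meAlpha} with $\omega^\alpha$ and using only $\Vert\nabla b^\alpha\Vert_{1,2}$, and then closes a logarithmic Gr\"onwall inequality for $\mathrm{e}+\Vert(b^\alpha,\omega^\alpha)\Vert_{\mathcal{M}}^2$ obtained from \eqref{b32est}, \eqref{omeg22} and the Br\'ezis--Gallouet inequality. The only cosmetic difference is that you retain the $\sqrt{\ln}$ factor and integrate via the substitution $\Psi(y)=2\sqrt{\ln(\mathrm{e}+y)}$, whereas the paper weakens $\sqrt{\ln g}$ to $\ln g$ and records the resulting double-exponential bound directly.
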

\begin{proof}
Fix $\alpha>0$ and let $T_{\max}^\alpha >0$ be  the maximal time so that
\begin{align}
\label{blowupTimeOfSol3}
\limsup_{T_n\rightarrow T_{\max}^\alpha} \Vert (b^\alpha,\omega^\alpha)(T_n) \Vert_{\mathcal{M}}^2 =\infty.
\end{align} 
We now suppose that
\begin{align}
\label{mnot3}
\int_0^{T_{\max}^\alpha}  \Vert \nabla b^\alpha\Vert_{1,2} \dt = K<\infty
\end{align}
and show that
\begin{align}
\label{trueEstToBeContra3}
\Vert (b^\alpha,\omega^\alpha)(t) \Vert_{\mathcal{M}}^2 \lesssim 1, \qquad t<T_{\max}^\alpha
\end{align}
holds and thereby yields a contradiction to \eqref{blowupTimeOfSol3}.
\\
To show this, we first fix $\alpha>0$
and define
\begin{align*}
g(t) := \mathrm{e}+ \Vert (b^\alpha,\omega^\alpha)(t) \Vert_{\mathcal{M}}^2, \qquad t>0.
\end{align*}
With this definition in hand, it follows from
 estimates \eqref{b32est} and \eqref{omeg22} that
\begin{align}
\label{b32estomeg22}
\frac{\dd}{\dt} g \lesssim \big( 1+\Vert \bu^\alpha \Vert_{1,\infty}
 + \Vert\omega^\alpha \Vert_2+
\Vert \nabla b^\alpha \Vert_{\infty} \big) g.
\end{align}
By using Lemma \ref{lem:brezis} and the monotonic property of logarithms however, we can deduce that
\begin{align}
\label{2212}
\Vert \nabla b^\alpha \Vert_\infty
 \lesssim  \big(1+ \Vert \nabla b^\alpha \Vert_{1,2}\big) \ln\, g.
\end{align}
Next, we observe that the second estimate in \eqref{uInfty} yields
\begin{align}
\label{oneUomega1}
 1+\Vert \bu^\alpha \Vert_{1,\infty}+\Vert\omega^\alpha\Vert_2 \lesssim_\alpha  1+ \Vert \omega^\alpha  \Vert_2
\end{align}
for $f \in L^2(\mathbb{T}^2)$.
However, if we test \eqref{meAlpha} with $\omega^\alpha$, 
we also obtain
\begin{equation}
\begin{aligned}
\label{o2est1}
\frac{\dd}{\dd t}
\Vert  \omega^\alpha \Vert_{2}^2 
&\lesssim
\Big(
\Vert \bu^\alpha \Vert_{\infty}\Vert \omega^\alpha \Vert_{2}
+
\Vert \bu_h \Vert_{\infty} \Vert \omega^\alpha \Vert_{2}
\Big) 
\Vert \nabla  b^\alpha \Vert_{2}
\lesssim
\Big(
1
+
 \Vert \omega^\alpha \Vert_{2}^2
\Big) 
\Vert \nabla  b^\alpha \Vert_{1,2}
\end{aligned}
\end{equation}
for a constant depending only on $\Vert\bu_h\Vert_\infty$ and $\Vert f \Vert_2$. It therefore follow from \eqref{mnot3} that
\begin{align}
\label{o2estTime1}
\Vert \omega^\alpha(t) \Vert_2^2\lesssim \big(1+ \Vert\omega_0\Vert_2^2 \big)\exp(cK), \qquad t<T_{\max}^\alpha.
\end{align}
Using the fact that $\ln\, g\geq1$, we can conclude from \eqref{oneUomega1} and \eqref{o2estTime1} that
\begin{align}
\label{oneUomega}
 1+\Vert \bu^\alpha \Vert_{1,\infty}+\Vert\omega^\alpha\Vert_2 \lesssim_\alpha \big(1+ \Vert\omega_0\Vert_2^2 \big)\exp(cK) \ln\, g.
\end{align}
If we now combine this estimate with \eqref{2212}, we can conclude from \eqref{b32estomeg22} that
\begin{align*}
\frac{\dd}{\dt} g \lesssim \big(1+\Vert \omega_0\Vert_{2}+ \Vert \nabla b^\alpha \Vert_{1,2} \big)\exp(cK)\, g \ln\, g
\end{align*}
so that
\begin{align*}
g(t)\leq g(0)^{\exp\int_0^tc(1+\Vert \omega_0 \Vert_{2} + \Vert \nabla b^\alpha(s) \Vert_{1,2}) \exp(cK)\dd s}
\end{align*}
holds for $t<T_{\max}^\alpha$. In particular, given \eqref{mnot3}, we have shown that
\begin{equation}
\begin{aligned}
\Vert (b^\alpha,\omega^\alpha)(t) \Vert_{\mathcal{M}}^2 &\leq \big[ \mathrm{e} +\Vert (b^\alpha_0, \omega^\alpha_0) \Vert_{\mathcal{M}}^2\big]^{
\exp(c(T_{\max}^\alpha+K)  \exp(cK))}
\\&
\lesssim_{T_{\max}^\alpha,K,f,\bu_h,\omega_0,b_0}1
\end{aligned}
\end{equation}
for all $t<{T_{\max}^\alpha}$ contradicting \eqref{blowupTimeOfSol3}.
\end{proof}

\section{Numerical implementation}\label{sec: numerics}
{For numerical implementation, we choose to work in weaker spaces than what the wellposedness theorem dictates. Additionally we allow for boundary conditions in the numerical setup. We recognise these choices create gaps between the theory and the implementation.}

\subsection{Discretisation methods for $\alpha$--TQG and TQG   }\label{sec: numerics methods}

In this subsection, we describe the finite element (FEM) spatial discretisation, and finite difference Runge-Kutta time stepping discretisation methods that are utilised for the $\alpha$-TQG system. 
By setting $\alpha$ to zero we obtain our numerical setup for the TQG system. 

Consider a bounded domain $\domain$. Let $\p \domain$ denote the boundary. We impose Dirichlet boundary conditions
\begin{equation}\label{eq: tqg boundary conditions numerical}
    \psi^\alpha = 0, \    
    \Delta \psi^\alpha = 0, \qquad \text{on } \p \domain.
\end{equation}
For $\domain = \mathbb T^2$, our numerical setup does not change, in which case the boundary flux terms in the discretised equations are set to zero.

\subsubsection{The stream function equation}\label{sec: stream function discretisation}

Let $H^{1}\left(\domain\right)$ denote the Sobolev $W^{1,2}\left(\domain\right)$
space and let $\left\Vert .\right\Vert _{\partial\domain}$ denote
the $L^{2}\left(\partial\domain\right)$ norm. 
Define the space 
\begin{equation}\label{eq: w1 space}
    W^{1}\left(\domain\right):=\left\{ \nu\in H^{1}\left(\domain\right)\left|\left\Vert \nu\right\Vert _{\partial\domain}=0\right.\right\}.
\end{equation}
We express \eqref{constrtAlpha} as two inhomogeneous Helmholtz equations
\begin{align}
    \omega^\alpha - f &= (\Delta -1 ) \tilde\psi \label{eq:inhomogeneousHelmholtz}\\
    -\tilde\psi &= (\alpha \Delta - 1)\psi^\alpha\label{eq:inhomogeneousHelmholtzAlpha}. 
\end{align}
We take $\tilde\psi,\psi^\alpha \in W^1 (\domain)$.
Since the two equations are of the same form, let us first consider \eqref{eq:inhomogeneousHelmholtz}.
Using an arbitrary test function $\phi \in W^1(\domain)$, we obtain the following weak form of \eqref{eq:inhomogeneousHelmholtz},
\begin{align}
\langle\nabla \tilde\psi,\nabla\phi\rangle_\domain +\langle\tilde\psi,\phi\rangle_\domain  &=  -\langle \omega^\alpha - f, \phi \rangle_\domain.
\label{eq: inhomogeneousHelmholtz weak}
\end{align}
Define the functionals
\begin{align}
    L_\alpha(v, \phi) &:= \alpha\langle \nabla v, \nabla \phi \rangle_\domain +\langle v, \phi  \rangle_\domain \label{eq: psi bilinear form}\\
    F_{\cdot}(\phi) &:=  -\langle  \cdot, \phi \rangle_\domain
    \label{eq: psi linear form}
\end{align}
for $v, \phi \in W^1(\domain)$, then \eqref{eq: inhomogeneousHelmholtz weak} can be written as
\begin{equation}\label{eq: TQG weak elliptic}
    L_1(\tilde\psi, \phi) = F_{\omega^\alpha -  f^\alpha}(\phi).
\end{equation}
And similarly for \eqref{eq:inhomogeneousHelmholtzAlpha}, we have
\begin{equation}\label{eq: TQG weak elliptic alpha} 
    L_\alpha(\psi^\alpha, \phi) = F_{-\tilde\psi}(\phi).
\end{equation}
We discretise equations \eqref{eq: TQG weak elliptic} and \eqref{eq: TQG weak elliptic alpha} using a continuous
Galerkin (CG) discretisation scheme. 

Let $\delta$ be the discretisation parameter, and let $\domain_\delta$ denote a space filling triangulation of the domain, that consists of geometry-conforming nonoverlapping elements.
Define the approximation space
\begin{equation} \label{eq: cg space}
    W_\delta^{k}(\domain):=\left\{ \phi_\delta\in W^{1}\left(\domain\right)\ : \ \phi_\delta\in C\left(\domain\right),\left.\phi_\delta\right|_{K}\in\Pi^{k}\left(K\right)\text{ each } K \in \domain_\delta\right\}.
\end{equation}
in which $C(\domain)$ is the space of continuous functions on $\domain$, and  $\Pi^{k}\left(K\right)$ denotes the space of polynomials of degree at most $k$ on element $K\in \domain_\delta$. 

For \eqref{eq: TQG weak elliptic}, given $f_\delta \in W_\delta^k(\domain)$ and $\omega_\delta \in V_\delta^k(\domain)$ (see \eqref{DG space} for the definition of $V_\delta^k(\domain)$), 
our numerical approximation is the solution $\tilde\psi_\delta \in W_\delta^k(\domain)$ that satisfies 
\begin{equation}
L_1(\tilde\psi_\delta, \phi_\delta) = F_{\omega_\delta - f_\delta}(\phi_\delta)
\label{eq: TQG psi eqn weak discretised}
\end{equation}
for all test functions $\phi_\delta \in W_\delta^k(\domain)$. Then, using $\tilde\psi_\delta$, our numerical approximation of $\psi^\alpha$ is the solution $\psi_\delta^\alpha\in W_\delta^k(\domain)$ that satisfies
\begin{equation}\label{eq: TQG psi alpha eqn weak discretised}
L_\alpha(\psi_\delta^\alpha, \phi_\delta) = F_{-\tilde\psi_\delta} (\phi_\delta)
\end{equation}
for all test functions $\phi_\delta \in W_\delta^k(\domain)$.

 For a detailed exposition of the numerical algorithms that solves the discretised problems \eqref{eq: TQG psi eqn weak discretised} and \eqref{eq: TQG psi alpha eqn weak discretised} we point the reader to \cite{Gibson2019, Brenner2008}.

\subsubsection{Hyperbolic equations}\label{sec: buoyancy equation discretisation}
We choose to discretise the hyperbolic buoyancy \eqref{ceAlpha} and potential vorticity \eqref{meAlpha} equations using a discontinuous Galerkin (DG) scheme. For a detailed exposition of DG methods, we refer the interested reader to \cite{Hesthaven2008}.

Define the DG approximation space, denoted by $V_\delta^k(\domain)$, to be the element-wise polynomial space,
\begin{equation}\label{DG space}
    V_\delta^k(\domain) = \left\{ \left. v_\delta \in L^2(\domain) \right|\forall K \in \domain_\delta,\ \exists\phi_\delta\in \Pi^k (K):\ \left.v_\delta\right|_{K}=\left.\phi_\delta\right|_K\right\}.
\end{equation}
We look to approximate $b^\alpha$ and $\omega^\alpha$ in the space $V_\delta^k(\domain)$. Essentially, this means our approximations of $b^\alpha$ and $\omega^\alpha$ are each an direct sum over the elements in $\domain_\delta$. Additional constraints on the numerical fluxes across shared element boundaries are needed to ensure conservation properties and stability. 
Further, note that $W^k_\delta(\domain) \subset V^k_\delta(\domain)$. This inclusion is also needed for ensuring numerical conservation laws, see Section \ref{sec: numerics numerical conservation}.

For the buoyancy equation \eqref{ceAlpha}, we obtain the following variational formulation
\begin{align}
\langle \p_t b^\alpha, \nu_\delta \rangle _{K} & = \langle b^\alpha {\bf u}^\alpha,\nabla\nu_\delta\rangle _K - \langle b^\alpha {\bf u}^\alpha\cdot\normal,\nu_\delta\rangle _{\partial K},\quad K\in \domain_\delta
\label{eq: TQG b alpha equation weak}
\end{align}
where $\nu_\delta\in V^k_\delta(\domain)$ is any test function, $\p K$ denotes the boundary of $K$, and $\normal$ denotes the unit normal vector to $\p K$. Let $b^\alpha_\delta$ be the approximation of $b^\alpha$ in $V_\delta^k$, and let $\bu^\alpha_\delta = \nabla^\perp \psi^\alpha_\delta$ for $\psi^\alpha_\delta \in W^k_\delta$. Our discretised buoyancy equation over each element is given by
\begin{equation}
    \langle \p_t b^\alpha_\delta, \nu_\delta \rangle _{K} = \langle b^\alpha_\delta{\bf u}^\alpha_\delta,\nabla\nu_\delta\rangle _K - \langle b^\alpha_\delta{\bf u}^\alpha_\delta \cdot \normal,\nu_\delta\rangle _{\partial K}, \quad K \in \domain_\delta.
    \label{eq: TQG b alpha eqn weak discretised}
\end{equation}

Similarly, let $\omega^\alpha_\delta \in V_\delta^k(\domain)$ be the approximation of $\omega^\alpha$, and let $h_\delta \in W_\delta^k$. We obtain the following discretised variational formulation that corresponds to \eqref{meAlpha}, 
\begin{align}
\langle \p_t \omega^\alpha_\delta, \nu_\delta \rangle _{K} 
&= \langle (\omega^\alpha_\delta - b^\alpha_\delta){\bf u}^\alpha_\delta,\nabla\nu_\delta\rangle _{K}
-\langle (\omega^\alpha_\delta - b^\alpha_\delta) {\bf u}^\alpha_\delta \cdot\normal,\nu_\delta\rangle _{\partial K} \nonumber \\
& \qquad - \frac12\langle \nabla \cdot (b^\alpha_\delta \nabla^\perp h_\delta), \nu_\delta \rangle _K,
\qquad K\in \domain_\delta,
\label{eq: TQG omega alpha eqn weak discretised}
\end{align}
for test function $\nu_\delta \in V_\delta^k(\domain)$. 

At this point, we only have the discretised problem on single elements. To obtain the global approximation, we sum over all the elements in $\domain_\delta$. In doing so, the $\p K$ terms in \eqref{eq: TQG b alpha eqn weak discretised} and \eqref{eq: TQG omega alpha eqn weak discretised} must be treated carefully.
Let $\p K_\text{ext}$ denote the part of cell boundary that is contained in $\p \domain$. Let $\p K_\text{int}$ denote the part of the cell boundary that is contained in the interior of the domain $\domain\backslash \p \domain$.
On $\p K_\text{ext}$ we simply impose the PDE boundary conditions. However, on $\p K_\text{int}$ we need to consider the contribution from each of the neighbouring elements.
By choice, the approximant $\psi^\alpha_\delta \in W_\delta^k$ is continuous on $\p K$. And since
\begin{equation}
    \bu^\alpha_\delta \cdot \normal = \nabla^\perp \psi^\alpha_\delta \cdot \normal = -\nabla \psi^\alpha_\delta \cdot \hat \tau = -\frac{\dd\psi^\alpha_\delta}{\dd\hat\tau},
\end{equation}
where $\hat\tau$ denotes the unit tangential vector to $\p K$, $\bu^\alpha_\delta \cdot \normal$ is also continuous. This means  $\bu^\alpha_\delta \cdot \normal$ in \eqref{eq: TQG b alpha eqn weak discretised} (also in \eqref{eq: TQG omega alpha eqn weak discretised}) is single valued. However, due to the lack of global continuity constraint in the definition of $V_k^\alpha(\domain)$, $b^\alpha_\delta$ and $\omega^\alpha_\delta$ are multi-valued on $\p K_\text{int}$. Thus, as our approximation of $\omega^\alpha$ and $b^\alpha$ over the whole domain is the sum over $K \in \domain_\delta$,  we have to constrain the flux on the set $\left(\bigcup_{K\in\domain_\delta} \p K \right)\setminus \p \domain$. This is done using appropriately chosen numerical flux fields in the boundary terms of \eqref{eq: TQG b alpha eqn weak discretised} and \eqref{eq: TQG omega alpha eqn weak discretised}.

Let 
{$\nu^{-}:=\lim_{\epsilon\uparrow0} \nu ({\bf x}+\epsilon\normal)$} 
and 
{ $\nu^{+}:=\lim_{\epsilon\downarrow0}\nu ({\bf x}+\epsilon\normal)$}, 
for ${\bf x}\in\partial K$, be the inside and outside (with respect to a fixed element $K$) values respectively, of a function $\nu$ on the boundary.
Let $\hat{f}$ be a \emph{numerical flux} function 
that satisfies the following properties:
\begin{enumerate}[(i)]
    \item consistency 
    \begin{equation}
     \hat f (\nu, \nu, \bu_\delta \cdot \normal) = \nu \bu_\delta\cdot\normal
     \label{eq: nflux consistency}
     \end{equation}
    
    \item conservative
    \begin{equation}
        \hat f (\nu^+, \nu^-, \bu_\delta\cdot \normal) = - \hat f (\nu^-, \nu^+, -\bu_\delta\cdot \normal)
        \label{eq: nflux conservative}
    \end{equation}
    
    \item $L^2$ stable in the enstrophy norm with respect to the buoyancy equation, see \cite[Section 6]{Bernsen2006}.
\end{enumerate}
With such an $\hat f$, we replace 
$b^\alpha_\delta \bu^\alpha_\delta \cdot \normal$ by the numerical flux $\hat{f}(b_\delta^{\alpha, +},b_\delta^{\alpha, -},{\bf u}^\alpha_\delta \cdot\normal)$  in \eqref{eq: TQG b alpha eqn weak discretised}.
Similarly, in \eqref{eq: TQG omega alpha eqn weak discretised}, we replace $(\omega^\alpha_\delta - b^\alpha_\delta)\bu_\delta^\alpha \cdot \normal$ by $\hat{f}((\omega^\alpha_\delta - b^\alpha_\delta)^{+}, (\omega^\alpha_\delta - b^\alpha_\delta)^{-}, \bu_\delta^\alpha \cdot \normal)$.

\begin{remark}
For a general nonlinear conservation law, one has to solve what is called the \emph{Riemann problem} for the numerical flux, see \cite{Hesthaven2008} for details.
In our setup, we use the following local Lax-Friedrichs flux, which is an approximate Riemann solver,
\begin{equation}
\hat{f}(\nu^+, \nu^-, \bu_\delta\cdot\normal) = \bu_\delta\cdot\normal
\{\{ \nu  \}\} - \frac{|\bu_\delta\cdot\normal|}{2}\llbracket\nu\rrbracket
\end{equation}
where
\begin{equation}
\{\{\nu\}\}:=\frac{1}{2}(\nu^{-}+\nu^{+}),\qquad
\llbracket\nu\rrbracket:=\normal^{-}\nu^{-}+\normal^{+}\nu^{+}.
\end{equation}
\end{remark}

Finally, our goal is to find $b^\alpha_\delta, \omega^\alpha_\delta \in V_\delta^k(\domain)$ such that for all {$\nu_\delta\in V_\delta^k(\domain)$} we have
\begin{align}
\sum_{K\in \domain_\delta} \langle \p_t b^\alpha_\delta, \nu_\delta\rangle _{K} 
&=
\sum_{K\in \domain_\delta}
\big\{
\langle b^\alpha_\delta\nabla^{\perp} \psi^\alpha_\delta, \nabla \nu_\delta\rangle _{K}  -
\langle \hat{f}_{b^\alpha}(b_\delta^{\alpha, +},b_\delta^{\alpha, -},\nabla^{\perp}\psi^\alpha_\delta \cdot \normal), \nu_\delta^{-}\rangle _{\partial K}
\big\}, 
\label{eq: TQG b eqn weak discretised numerical flux}
\\
\sum_{K\in \domain_\delta} \langle \p_t \omega^\alpha_\delta, \nu_\delta \rangle _{K} 
&= 
\sum_{K\in \domain_\delta} 
\big\{
\langle (\omega^\alpha_\delta - b^\alpha_\delta)\nabla^{\perp}\psi^\alpha_\delta,\nabla\nu_\delta\rangle _{K} - \frac12\langle \nabla \cdot (b^\alpha_\delta \nabla^\perp h_\delta), \nu_\delta \rangle _K
\label{eq: TQG omega eqn weak discretised numerical flux}
\\
&\qquad -
\langle \hat{f}_{\omega^\alpha}((\omega^\alpha_\delta - b^\alpha_\delta)^{+}, (\omega^\alpha_\delta - b^\alpha_\delta)^{-}, \nabla^\perp \psi_\delta^\alpha \cdot \normal),
\nu_\delta^ -
\rangle _{\partial K}
\big\}
\nonumber
\end{align}
with $\psi_\delta^\alpha \in W_\delta^k(\domain)$ being the numerical approximation to the stream function. 

\begin{remark}
In \eqref{eq: TQG b eqn weak discretised numerical flux} and \eqref{eq: TQG omega eqn weak discretised numerical flux} we do not explicitly distinguish $\p K_\text{ext}$ and $\p K_\text{int}$ because for the boundary conditions \eqref{eq: tqg boundary conditions numerical}, the $\p K_\text{ext}$ terms vanish. 
\end{remark}

\subsubsection{Numerical conservation}
\label{sec: numerics numerical conservation}
Conservation properties of the TQG system was first shown in \cite{holm2021stochastica}. More specifically, the TQG system conserves energy, and an infinite family of quantities called casimirs. Proposition \ref{prop: TQG alpha conservation} below describes the conservation properties of the $\alpha$--TQG system. We note that the form of the conserved energy and casimirs are the same as that of the TQG system.
Although the result is stated for the system with boundary conditions, it is easy to show that the same result holds for $\mathbb T^2$ .

\begin{proposition}[$\alpha$--TQG conserved quantities]\label{prop: TQG alpha conservation}
On a bounded domain $\domain$ with boundary $\p \domain$, consider the $\alpha$-TQG system \eqref{ceAlpha} -- \eqref{constrtAlpha} 
with boundary conditions
\begin{equation}\label{eq: tqg boundary conditions}
    \normal\cdot \bu^\alpha = 0, \  \normal\times \nabla b^\alpha = 0,\   
    \frac \dd {\dd t}
    \int_{\p \domain} \nabla \psi^\alpha \cdot \normal = 0, \ 
    \frac \dd {\dd t}
    \int_{\p \domain}  \nabla \Delta \psi^\alpha \cdot \normal = 0, \ 
    \normal \times \nabla \Delta \psi^\alpha = 0
\end{equation}
We have
\begin{equation}\label{eq: tqg energy}
     E^\alpha(t) := - \frac12  \int_\domain \{(\omega^\alpha - f)\psi^\alpha + \rmh b^\alpha \} \dd \bx,
\end{equation}
and
\begin{equation}\label{eq: tqg casimirs}
     C^\alpha_{\Psi, \Phi}(t) :=   \int_\domain \{ \Psi(b^\alpha) + \omega^\alpha \Phi(b^\alpha) \} \dd \bx ,\qquad \forall \Psi, \Phi \in C^\infty
\end{equation}
are conserved, i.e. $\dd_t E^\alpha(t) = 0$ and $\dd_t C^\alpha_{\Psi, \Phi}(t) = 0$.
\end{proposition}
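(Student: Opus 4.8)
The plan is to differentiate $E^\alpha$ and $C^\alpha_{\Psi,\Phi}$ in time, substitute the evolution equations \eqref{ceAlpha}--\eqref{meAlpha}, and show each derivative vanishes. Throughout I write the advection terms in Jacobian form, $\bu^\alpha\cdot\nabla(\cdot) = J(\psi^\alpha,\cdot)$ and $\bu_h\cdot\nabla(\cdot) = \tfrac12 J(h,\cdot)$, and I lean on two integration-by-parts identities for the Jacobian on $\domain$: for smooth $f,g,k$,
\begin{equation*}
\int_\domain J(g,k)\dd\bx = \oint_{\p\domain} k\,\nabla^\perp g\cdot\normal\dd S,
\qquad
\int_\domain f\,J(g,k)\dd\bx + \int_\domain k\,J(g,f)\dd\bx = \oint_{\p\domain} fk\,\nabla^\perp g\cdot\normal\dd S,
\end{equation*}
both of which follow from $J(g,k) = \nabla\cdot(k\nabla^\perp g)$ and the divergence theorem. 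On $\mathbb{T}^2$ the boundary terms are absent; on $\domain$ they will be eliminated using the boundary conditions \eqref{eq: tqg boundary conditions}. Two consequences I use repeatedly: since $\mathrm{div}\,\bu^\alpha = 0$ and $\normal\cdot\bu^\alpha = 0$, any term $J(\psi^\alpha,\cdot)$ tested against a function integrates to zero; and $b^\alpha$ being transported by $\bu^\alpha$ implies $\partial_t\Phi(b^\alpha) + J(\psi^\alpha,\Phi(b^\alpha)) = 0$ for every smooth $\Phi$.

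For the Casimirs, conservation of $\int_\domain\Psi(b^\alpha)\dd\bx$ is immediate from this transport identity together with $\normal\cdot\bu^\alpha = 0$. For $\int_\domain\omega^\alpha\Phi(b^\alpha)\dd\bx$ I differentiate and substitute both equations; the purely advective contributions $-\int\Phi(b^\alpha)J(\psi^\alpha,\omega^\alpha-b^\alpha) - \int\omega^\alpha J(\psi^\alpha,\Phi(b^\alpha))$ collapse, after the antisymmetry identity, to $-\int b^\alpha J(\psi^\alpha,\Phi(b^\alpha)) = -\int J(\psi^\alpha, G(b^\alpha))$ with $G'(s) = s\Phi'(s)$, a total Jacobian annihilated by $\normal\cdot\bu^\alpha = 0$. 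The forcing contribution $-\tfrac12\int\Phi(b^\alpha)J(h,b^\alpha) = -\tfrac12\int J(h,\widetilde{\Phi}(b^\alpha))$, with $\widetilde{\Phi}' = \Phi$, reduces to $-\tfrac12\oint\widetilde{\Phi}(b^\alpha)\nabla^\perp h\cdot\normal$; one integration by parts along $\p\domain$ turns this into a multiple of $\oint h\,\Phi(b^\alpha)\,\partial_{\hat\tau}b^\alpha$, which vanishes because $\normal\times\nabla b^\alpha = 0$ forces the tangential derivative $\partial_{\hat\tau}b^\alpha$ to be zero on $\p\domain$.

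For the energy I first record that $\omega^\alpha - f = \mathcal{L}\psi^\alpha$ with $\mathcal{L} := (\Delta-1)(1-\alpha\Delta)$, so $\partial_t\omega^\alpha = \mathcal{L}\partial_t\psi^\alpha$. Differentiating $E^\alpha$ and using self-adjointness of $\mathcal{L}$ to identify $\int\psi^\alpha\partial_t\omega^\alpha = \int(\omega^\alpha-f)\partial_t\psi^\alpha$, I reduce the derivative to $\dd_t E^\alpha = -\int\psi^\alpha\partial_t\omega^\alpha - \tfrac12\int h\,\partial_t b^\alpha$. Substituting the PDEs, the self-interaction $\int\psi^\alpha J(\psi^\alpha,\omega^\alpha-b^\alpha)$ vanishes, since it equals $\oint\psi^\alpha(\omega^\alpha-b^\alpha)\nabla^\perp\psi^\alpha\cdot\normal$ and $\nabla^\perp\psi^\alpha\cdot\normal = \normal\cdot\bu^\alpha = 0$; the remaining terms are $\tfrac12\int\psi^\alpha J(h,b^\alpha) + \tfrac12\int h\,J(\psi^\alpha,b^\alpha)$. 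Using the product rule $\psi^\alpha\nabla^\perp h + h\nabla^\perp\psi^\alpha = \nabla^\perp(\psi^\alpha h)$, these combine into $\tfrac12\int J(\psi^\alpha h, b^\alpha) = \tfrac12\oint b^\alpha\,\nabla^\perp(\psi^\alpha h)\cdot\normal$, which again vanishes by $\normal\times\nabla b^\alpha = 0$ after integrating by parts along the boundary. On $\mathbb{T}^2$ all of these boundary integrals are simply absent.

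The hard part will be the self-adjointness of the fourth-order operator $\mathcal{L} = (\Delta-1)(1-\alpha\Delta)$ on the bounded domain, which is the one place the full strength of \eqref{eq: tqg boundary conditions} is needed. Symmetrizing $\int\mathcal{L}\psi^\alpha\,\partial_t\psi^\alpha\,\dd\bx$ requires integrating by parts four times and generates boundary integrals in $\psi^\alpha$, $\nabla\psi^\alpha\cdot\normal$, $\Delta\psi^\alpha$ and $\nabla\Delta\psi^\alpha\cdot\normal$. The conditions $\tfrac{\dd}{\dd t}\int_{\p\domain}\nabla\psi^\alpha\cdot\normal = 0$, $\tfrac{\dd}{\dd t}\int_{\p\domain}\nabla\Delta\psi^\alpha\cdot\normal = 0$ and $\normal\times\nabla\Delta\psi^\alpha = 0$ are precisely what is needed to make the surviving contributions either cancel or drop out upon time differentiation; I would carry out this boundary bookkeeping explicitly, whereas on $\mathbb{T}^2$ self-adjointness and the entire computation are immediate.
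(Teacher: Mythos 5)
Your proposal is correct and follows essentially the same route as the paper's proof: differentiate $E^\alpha$ and $C^\alpha_{\Psi,\Phi}$, use the self-adjointness of $(\Delta-1)(1-\alpha\Delta)$ to write $\tfrac{\dd}{\dd t}\int\psi^\alpha(\omega^\alpha-f)\dd\bx = 2\int\psi^\alpha\,\p_t\omega^\alpha\dd\bx$, substitute the evolution equations, and kill the resulting total Jacobians with the boundary conditions — the paper compresses all of this into one display plus the phrase ``the result then follows from direct calculations.'' Your explicit Jacobian identities and the boundary bookkeeping for the fourth-order operator are exactly the details the paper leaves implicit (and which its subsequent remark about the effective Dirichlet conditions on $\psi^\alpha$ and $\Delta\psi^\alpha$ alludes to), so there is nothing to correct.
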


\begin{proof}
For the energy $E^\alpha(t)$, we obtain
\begin{align*}
    \frac \dd {\dd t} E^\alpha(t) &= -\frac12 \frac \dd {\dd t} \int_\domain \{\psi^\alpha (\Delta - 1)(1-\alpha \Delta)\psi^\alpha + \rmh b^\alpha \} \dd \bx \\
    &=\int_\domain \{-\psi^\alpha \ \p_t \omega^\alpha - \frac12 h \p_t b^\alpha \} \dd\bx.
\end{align*}
Substitute $\p_t \omega^\alpha$ and $\p_t b^\alpha$ using \eqref{ceAlpha} and \eqref{meAlpha},
the result then follows from direct calculations.

Similarly for the casimirs, we obtain the result by directly evaluating $\dd_t C^\alpha_{\Psi, \Phi}(t)$.
\end{proof}

\begin{remark}
In \eqref{eq: tqg boundary conditions}, except for the integral Neumann boundary conditions, we effectively have Dirichlet boundary conditions for $\psi^\alpha$, $b^\alpha$ and $\Delta \psi^\alpha$. The integral Neumann boundary conditions can be viewed as imposing the Kelvin theorem on the boundary -- consider
\begin{equation}
    \frac \dd {\dd t}
    \int_{\p \domain}  \nabla (\psi^\alpha - \alpha \Delta \psi^\alpha) \cdot \normal = 0,
\end{equation}
and apply the divergence theorem.

In our FEM discretisation, we impose only Dirichlet boundary conditions.
If the integral Neumann boundary conditions are not imposed, then for Proposition \ref{prop: TQG alpha conservation} to hold, we necessarily require $\psi^\alpha$ and $\Delta \psi^\alpha$ on $\p \domain$ to be \emph{zero}.
\end{remark}

We now analyse the conservation properties of our spatially discretised $\alpha$-TQG system. 
Let $\langle \cdot, \cdot \rangle_{H^1(\domain)}$ be the $H^1(\domain)$ inner product.
Define the numerical total energy
\begin{equation}\label{eq: numerical energy}
    E^\alpha_\delta(t) = 
    \frac12
    \langle \psi^\alpha_\delta
    , \tilde \psi_\delta
    \rangle_{H^1(\domain)}
    - 
    \frac12 
    \langle \rmh_\delta, b^\alpha_\delta \rangle_\domain,
\end{equation}
in which $\psi_\delta^\alpha, \tilde{\psi}_\delta \in W_\delta^\alpha(\domain)$ and $b_\delta^\alpha\in V_\delta^k(\domain)$ are the numerical approximations of $\psi^\alpha$, $\tilde{\psi}$ and $b^\alpha$ respectively.
Define the numerical casimir functional
\begin{equation}\label{eq: TQG numerical casimirs}
C_{\delta}^\alpha(t; \Psi, \Phi ):= \langle\Psi(b_\delta^\alpha),1 \rangle_\domain
+ \langle \omega_\delta^\alpha \Phi(b_\delta^\alpha) \rangle_\domain, 
\quad \Psi, \Phi \in C^\infty.
\end{equation}

\begin{lemma} 
With boundary conditions \eqref{eq: tqg boundary conditions numerical}
\begin{equation}
    \frac \dd {\dd t} E^\alpha_\delta(t) = 0
\end{equation}
for $\alpha = 0$.
\end{lemma}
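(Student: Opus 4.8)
The plan is to reproduce, at the discrete level, the energy computation carried out for the continuous system in the proof of Proposition \ref{prop: TQG alpha conservation}, exploiting the inclusion $W^k_\delta(\domain)\subset V^k_\delta(\domain)$ so that the continuous Galerkin approximants of the stream function and bathymetry are admissible test functions in the discontinuous Galerkin equations \eqref{eq: TQG b eqn weak discretised numerical flux}--\eqref{eq: TQG omega eqn weak discretised numerical flux}.

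First I would specialise to $\alpha=0$. Then \eqref{eq:inhomogeneousHelmholtzAlpha} gives $\tilde\psi_\delta=\psi^0_\delta=:\psi_\delta$, so the numerical energy \eqref{eq: numerical energy} collapses to $E^0_\delta=\tfrac12\langle\psi_\delta,\psi_\delta\rangle_{H^1(\domain)}-\tfrac12\langle h_\delta,b^0_\delta\rangle_\domain$. Since $h_\delta$ is time-independent, $\tfrac{\dd}{\dd t}E^0_\delta=\langle\partial_t\psi_\delta,\psi_\delta\rangle_{H^1(\domain)}-\tfrac12\langle h_\delta,\partial_t b^0_\delta\rangle_\domain$. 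Differentiating the discrete Helmholtz relation \eqref{eq: TQG psi eqn weak discretised} in time (with $f_\delta$ fixed) and testing with $\phi_\delta=\psi_\delta\in W^k_\delta(\domain)$ converts the first term into $-\langle\partial_t\omega^0_\delta,\psi_\delta\rangle_\domain$, yielding $\tfrac{\dd}{\dd t}E^0_\delta=-\langle\partial_t\omega^0_\delta,\psi_\delta\rangle_\domain-\tfrac12\langle h_\delta,\partial_t b^0_\delta\rangle_\domain$, the exact discrete analogue of the identity appearing in the proof of Proposition \ref{prop: TQG alpha conservation}.

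Next I would insert $\partial_t\omega^0_\delta$ and $\partial_t b^0_\delta$ from \eqref{eq: TQG omega eqn weak discretised numerical flux} and \eqref{eq: TQG b eqn weak discretised numerical flux}, taking $\nu_\delta=\psi_\delta$ in the vorticity equation and $\nu_\delta=h_\delta$ in the buoyancy equation. The vorticity advection volume term $\langle(\omega^0_\delta-b^0_\delta)\nabla^\perp\psi_\delta,\nabla\psi_\delta\rangle_K$ vanishes pointwise since $\nabla^\perp\psi_\delta\cdot\nabla\psi_\delta\equiv0$. The remaining volume contributions, namely the forcing contribution $\tfrac12\langle\nabla\cdot(b^0_\delta\nabla^\perp h_\delta),\psi_\delta\rangle_K$ coming from $-\langle\partial_t\omega^0_\delta,\psi_\delta\rangle$ and the buoyancy-advection contribution $-\tfrac12\langle b^0_\delta\nabla^\perp\psi_\delta,\nabla h_\delta\rangle_K$ coming from $-\tfrac12\langle h_\delta,\partial_t b^0_\delta\rangle$, I would combine on each element using the antisymmetry of the Jacobian and the product rule, rewriting their sum as the single element-divergence $\tfrac12\int_K\nabla\cdot\big((\psi_\delta b^0_\delta)\nabla^\perp h_\delta\big)$, equivalently the interface integral $\tfrac12\int_{\partial K}\psi_\delta b^0_\delta\,\nabla^\perp h_\delta\cdot\normal$.

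At this point $\tfrac{\dd}{\dd t}E^0_\delta$ is a sum over $K\in\domain_\delta$ of interface integrals: the two numerical-flux terms $\langle\hat f_{\omega^0},\psi_\delta^-\rangle_{\partial K}$ and $\tfrac12\langle\hat f_{b^0},h_\delta^-\rangle_{\partial K}$, together with the divergence interface integral just produced. On $\partial K_{\mathrm{ext}}$ all of these vanish, since the boundary conditions \eqref{eq: tqg boundary conditions numerical} force $\psi_\delta=0$ and hence $\bu^0_\delta\cdot\normal=-\dd\psi_\delta/\dd\hat\tau=0$, so by consistency \eqref{eq: nflux consistency} the fluxes vanish while the divergence term carries the factor $\psi_\delta=0$. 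On each interior face I would pair the two adjacent elements and use that $\psi_\delta$, $h_\delta$ and $\bu^0_\delta\cdot\normal$ are single-valued there (continuity of the $W^k_\delta$ approximants), whereas $b^0_\delta,\omega^0_\delta$ may jump; the conservative property \eqref{eq: nflux conservative} then makes the paired flux contributions telescope. The main obstacle is precisely this interior-face bookkeeping: one must show that the interface integral generated by the volume forcing term, which involves the jump of $b^0_\delta$ across faces, is reconciled with the numerical-flux terms so that the full sum over interior faces vanishes. This reconciliation is where the consistency and conservativity conditions \eqref{eq: nflux consistency}--\eqref{eq: nflux conservative} on $\hat f$, the single-valuedness across faces of the continuous approximants $\psi_\delta,h_\delta$, and the compatibility $W^k_\delta\subset V^k_\delta$ all enter; it is the step most likely to require care, and possibly a flux-consistent treatment of the buoyancy-forcing term.
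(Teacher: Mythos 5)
Your proposal follows the paper's proof essentially step for step: the same splitting of $E^0_\delta$, the same time-differentiation of the discrete Helmholtz relation, the same use of the inclusion $W^k_\delta(\domain)\subset V^k_\delta(\domain)$ to test the DG vorticity and buoyancy equations with $\psi_\delta$ and $h_\delta$, the pointwise vanishing of $\nabla^\perp\psi_\delta\cdot\nabla\psi_\delta$, and the final cancellation of the numerical-flux terms via single-valuedness of $\psi_\delta$, $h_\delta$ and $\bu_\delta\cdot\normal$ on interior faces together with the conservative property \eqref{eq: nflux conservative}. The one place where you stop short --- reconciling the interface integral $\tfrac12\sum_{K}\int_{\partial K}\psi_\delta\, b^0_\delta\,\nabla^\perp h_\delta\cdot\normal$ produced by combining the bathymetric forcing term with the buoyancy-transport term --- is precisely the step the paper itself elides: in passing to \eqref{eq: numerical energy temp} the terms $\tfrac12\langle\nabla\cdot(b^0_\delta\nabla^\perp h_\delta),\psi_\delta\rangle_K$ and $\tfrac12\langle b^0_\delta\nabla^\perp h_\delta,\nabla\psi_\delta\rangle_K$ are dropped without comment, and their sum is exactly that interface integral. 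On an interior face $e$ it reduces to $\tfrac12\int_e \psi_\delta\,(\nabla^\perp h_\delta\cdot\normal^-)\,(b^{0,-}_\delta-b^{0,+}_\delta)$, which does not vanish from continuity of $\psi_\delta$ and $h_\delta$ and conservativity of $\hat f$ alone, since $b^0_\delta$ jumps across faces; with the forcing discretised as a pure volume term as in \eqref{eq: TQG omega eqn weak discretised numerical flux}, closing this step needs either an extra argument or, as you suggest, a flux-consistent treatment of the forcing. So your account is faithful to the paper's argument and, if anything, more honest about the one genuinely delicate term.
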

In other words, the semi-discrete discretisation conserves energy in the TQG case.
\begin{proof}
We consider $\dd/\dd_t$ evaluated at an arbitrary fixed value $t_0$.
First note that when $\alpha=0$, we have $\psi^\alpha_\delta = \tilde \psi_\delta$. Then, from \eqref{eq: psi bilinear form} we have
\begin{align}
    2E_\delta^\alpha(t) 
    &= L_1 (\psi_\delta^\alpha, {\psi}^\alpha_\delta) - \langle \rmh_\delta, b_\delta^\alpha \rangle_\Omega
    \label{eq: TQG total energy numerical}
\end{align}
For the first term in \eqref{eq: TQG total energy numerical}, following \eqref{eq: TQG psi eqn weak discretised} we have
\begin{align}
    \left.L_1(\p_t \psi_\delta, \phi_\delta)\right|_{t=t_0} = - \sum_{K\in \mathcal \domain_\delta} \left.\langle\p_t \omega^\alpha_\delta, \phi_\delta \rangle_K \right|_{t=t_0},
    \quad 
    \forall \phi_\delta \in W_\delta^k(\domain).
\end{align}
 Thus, substituting in the discretised equation \eqref{eq: TQG omega eqn weak discretised numerical flux} for $\omega^\alpha$, we obtain
\begin{align}
    - L_1(\left.\p_t \psi_\delta, \phi_\delta)\right|_{t=t_0}
&= 
\sum_{K\in \domain_\delta} 
\left[
\big\{
\langle (\omega^\alpha_\delta - b^\alpha_\delta)\nabla^{\perp}\psi^\alpha_\delta,\nabla\phi_\delta\rangle _{K} - \frac12\langle \nabla \cdot (b^\alpha_\delta \nabla^\perp h_\delta), \phi_\delta \rangle _K
\right.
\nonumber
\\
&\qquad -
\left.
\langle \hat{f}_{\omega^\alpha}((\omega^\alpha_\delta - b^\alpha_\delta)^{+}, (\omega^\alpha_\delta - b^\alpha_\delta)^{-}, \nabla^\perp \psi_\delta^\alpha \cdot \normal),
\phi_\delta^ -
\rangle _{\partial K}
\big\}
\right]_{t=t_0},
\label{eq: TQG kinetic energy time differential}
\end{align}
in which 
we can choose $\phi_\delta=[\psi^\alpha_\delta]_{t=t_0}$. This choice is consistent with the discretisation space $V_\delta^k(\domain)$ for $\omega^\alpha$, since $W_\delta^k(\domain) \subset V_\delta^k(\domain)$.

Similarly, for the second term in \eqref{eq: TQG total energy numerical}, using \eqref{eq: TQG b eqn weak discretised numerical flux} we obtain
\begin{equation}
    \left.\frac \dd {\dd t}\right\vert_{t=t_0} \langle b^\alpha_\delta, \nu_\delta \rangle_\domain
    = \sum_{K\in \domain_\delta} \left[ 
    \big\{
        \langle b^\alpha_\delta\nabla^{\perp} \psi^\alpha_\delta, \nabla \nu_\delta\rangle _{K}  -
        \langle \hat{f}_{b^\alpha}(b_\delta^{\alpha, +},b_\delta^{\alpha, -},\nabla^{\perp}\psi^\alpha_\delta \cdot \normal), \nu_\delta^{-}\rangle _{\partial K}
    \big\}
    \right]_{t=t_0},
    \label{eq: TQG potential energy time differential}
\end{equation}
where we can choose $\nu_\delta = \rmh_\delta$.

Putting together \eqref{eq: TQG total energy numerical}, \eqref{eq: TQG kinetic energy time differential} and \eqref{eq: TQG potential energy time differential}, and substituting in $\rmh_\delta$ and $[\psi^\alpha_\delta]_{t=t_0}$ for the arbitrary choices, we obtain
\begin{align}
    2 \frac \dd {\dd t}\Big|_{t=t_0} E^\alpha_\delta(t)
    &= 
    - \sum_{K\in \domain_\delta} 
    \left[
    2 \Big\{
    \langle (\omega^\alpha_\delta - b^\alpha_\delta)
    \underbrace{
    \nabla^{\perp}
    \psi^\alpha_\delta,\nabla
    \psi^\alpha_\delta
    }_{=0}
    \rangle _{K} - \frac12\langle \nabla \cdot (b^\alpha_\delta \nabla^\perp h_\delta), \psi^\alpha_\delta
    \rangle _K
    \right.
    \nonumber
    \\
    &\qquad 
    -
    \left.
    \langle \hat{f}_{\omega^\alpha}((\omega^\alpha_\delta - b^\alpha_\delta)^{+}, (\omega^\alpha_\delta - b^\alpha_\delta)^{-}, \nabla^\perp \psi_\delta^\alpha \cdot \normal),
    {\psi^\alpha_\delta}^-
    \rangle _{\partial K}
    \Big\}
    \right.
    \nonumber
    \\
    & 
    +
    \left. 
    \Big\{
        \underbrace{
        \langle
        b^\alpha_\delta\nabla^{\perp} \psi^\alpha_\delta, \nabla \rmh_\delta\rangle _{K}  
        }_{=- \langle
        b^\alpha_\delta\nabla^{\perp}
        \rmh_\delta, \nabla \psi^\alpha_\delta\rangle _{K}  }
        -
        \langle \hat{f}_{b^\alpha}(b_\delta^{\alpha, +},b_\delta^{\alpha, -},\nabla^{\perp}\psi^\alpha_\delta \cdot \normal), \rmh_\delta^{-}\rangle _{\partial K}
    \Big\}
    \right]_{t=t_0}
\end{align}
\begin{align}
    \frac \dd {\dd t} E^\alpha_\delta(t) &= \sum_{K\in \mathcal \domain_\delta} 
    \Big[
    - \langle \hat f_{\omega^\alpha}((\omega^\alpha_\delta-b^\alpha_\delta)^+,(\omega_\delta^\alpha - b^\alpha_\delta)^-, \nabla^\perp \psi^\alpha_\delta\cdot\normal),{\psi^\alpha_\delta}^-\rangle _{\partial K}
    \nonumber \\
    & \qquad 
    - \frac12
    \langle \hat{f}_{b^\alpha}({b^\alpha_\delta}^{+},{b^\alpha_\delta}^{-},\nabla^{\perp}\psi^\alpha_\delta \cdot \normal), \rmh_\delta^{-}\rangle _{\partial K} \Big]_{t=t_0}
    \label{eq: numerical energy temp}
\end{align}
Since $\psi_\delta$ and $\rmh_\delta$ are continuous across element boundaries, we have $\psi_\delta^+ = \psi_\delta^-$ and $\rmh_\delta^+ = \rmh_\delta^-$.
Thus, if the numerical fluxes $\hat f_{\omega^\alpha}$ and $\hat f_{b^\alpha}$ satisfy the conservative property \eqref{eq: nflux conservative}, the sum of flux terms is zero.
\end{proof}

\begin{remark}
In order for our semi-discrete discretisation to conserve numerical energy when $\alpha > 0$, more regularity is required for the approximation space for $\psi^\alpha_\delta$. Additionally, the current scheme coupled with the time stepping algorithm do not conserve  casimirs. For future work, we look to explore casimir conserving schemes for the model.
\end{remark}



\subsubsection{Time stepping}
To discretise the time derivative, we use the strong stability preserving Runge-Kutta of order 3 (SSPRK3) scheme, see \cite{Hesthaven2008}.
Writing the finite element spatial discretisation formally as 
\begin{equation}
    \partial_{t}b^\alpha_\delta={\rm f}_\delta(b^\alpha_\delta)
\end{equation}
where ${\rm f}_\delta$ is the discretisation operator that follows from \eqref{eq: TQG b eqn weak discretised numerical flux}, and
\begin{equation}
     \partial_t \omega = {\rm g}_\delta (\omega, b)
\end{equation}
where ${\rm g}_\delta$ is the discretisation operator that follows from \eqref{eq: TQG omega eqn weak discretised numerical flux}.
Let $b^{\alpha, n}_\delta$ and $\omega^{\alpha,n}_\delta$ denote the approximation of $b^\alpha_\delta$ and $\omega^\alpha_\delta$ at time step $t_n$.
The SSPRK3
time discretisation is as follows 
\begin{subequations}
\begin{align}
    b^{(1)} & = b^{\alpha,n}_\delta + \Delta t \ {\rm f}_\delta (b^{\alpha,n}_\delta)\\
	\omega^{(1)} & =\omega^{\alpha,n}_\delta+\Delta t \  {\rm g}_\delta\left(\omega^{\alpha,n}_\delta, b^{\alpha,n}_\delta \right)\\
	b^{\left(2\right)} 
	&=\frac{3}{4}b^{\alpha,n}_\delta +\frac{1}{4}\left(b^{\left(1\right)}+\Delta t \ {\rm f}_\delta\left(b^{\left(1\right)}\right)\right)\\
	\omega^{\left(2\right)} 
	&= \frac{3}{4}\omega^{\alpha,n}_\delta
	+\frac{1}{4}\left(\omega^{\left(1\right)}+\Delta t \  {\rm g}_\delta\left(\omega^{\left(1\right)}, b^{(1)}\right)\right)\\
	b^{\alpha,n+1}_\delta & =\frac{1}{3}b^{\alpha,n}_\delta+\frac{2}{3}\left(b^{\left(2\right)}+\Delta t \  {\rm f}_\delta\left(b^{\left(2\right)}\right)\right)\\
	\omega^{\alpha,n+1}_\delta & =\frac{1}{3}\omega^{\alpha,n}_\delta
	+\frac{2}{3}\left(\omega^{\left(2\right)}+\Delta t \  {\rm g}_\delta\left(\omega^{\left(2\right)}, b^{(2)}\right)\right)
\end{align}
\end{subequations}
where $\Delta t = t_{n+1}-t_{n}$ each $n$.

\subsection{$\alpha$-TQG linear thermal Rossby wave stability analysis}\label{sec: numerics linear stability analysis}


A dispersion relation for the TQG system was derived in \cite{holm2021stochastica}. There, the authors showed that the linear thermal Rossby waves of the TQG system possess high wavenumber instabilities. More specifically, the Doppler-shifted phase speed of these waves becomes complex at sufficiently high wavenumbers. However, the growth rate of the instability decreases to zero as $|\bk|^{-1}$, in the limit that $|\bk|\to\infty$. Consequently, the TQG dynamics is linearly well-posed. That is, the TQG solution depends continuously on initial conditions. 
In this subsection, using the same equilibrium state as in \cite{holm2021stochastica}, we derive a dispersion relation for the  thermal Rossby wave solutions of the linearised $\alpha$-TQG system. Again the Doppler-shifted phase speed of these waves becomes complex at sufficiently high wavenumbers. However, in this case, the growth rate of the instability decreases to zero as $|\bk|^{-2}$, in the limit that $|\bk|\to\infty$. In the limit that $\alpha$ tends to zero, one recovers the TQG dispersion relation derived in \cite{holm2021stochastica}.

For the reader's convenience, we repeat the equations solved by the pair of variables $(b^\alpha,\omega^\alpha)$ in the $\alpha$-TQG system in \eqref{ceAlpha}--\eqref{meAlpha}, as formulated now in vorticity-streamfunction form with fluid velocity given by ${\bu}^\alpha =\nabla^\perp \psi^\alpha$. This formulation is given by
\begin{align}
\frac{\partial}{\partial t} b^\alpha + J(\psi^\alpha,b^\alpha) &= 0,
\label{ceZero-redux}\\
\frac{\partial}{\partial t}\omega^\alpha + J(\psi^\alpha,\omega^\alpha-b^\alpha) &= -\frac{1}{2}J(h,b^\alpha),
\label{meZero-redux}\\
\omega^\alpha &= (\Delta-1) (1-\alpha \Delta )\psi^\alpha + f.
\label{constrtZero-redux}
\end{align}
Here, as before, $J(a,b)=\nabla^\perp a\cdot \nabla b=a_x b_y - b_x a_y$ denotes the Jacobian of any smooth functions $a$ and $b$ defined on the $(x,y)$ plane.

Equilibrium states of the $\alpha$-TQG system in the equations \eqref{ceZero-redux}-\eqref{constrtZero-redux} satisfy $J(\psi^\alpha_e, \omega^\alpha_e) = J(\psi^\alpha_e, b^\alpha_e)=J(h, b^\alpha_e)=0.$
The equilibrium TQG state considered in \cite{holm2021stochastica} was given via the specification of the following
gradient fields 
\[
\begin{array}{c}
\nabla\psi^\alpha_{e}=-U\yhat,\ \nabla\omega^\alpha_{e}=(U-\beta)\yhat,\ \nabla f=-\beta\yhat\\
\\
\nabla b^\alpha_{e}=-B\yhat,\ \nabla\rmh=-H\yhat
\end{array}
\]
where we have taken $f=1-\beta y$ and the equilibrium parameters $U, \beta, B, H \in \reals$ are all constants.
Linearising the $\alpha$-TQG system around the steady state produces the
following evolution equations for the perturbations $\omega'$, $b'$
and ${\psi}'$
\begin{subequations}\label{eq: linear alpha tqg}
\begin{align}
\omega_{t}'+U\omega'_{x}+(U+B-\beta){\psi}_{x}' & =(U-H/2)b'_{x}
\\
b'_{t}+Ub_{x}'-B{\psi}_{x}' & =0\\
 \Big((1-\alpha\Delta)(1-\Delta)\Big){\psi}' & =-\,\omega'. \label{eq: linear alpha tqg psi}
\end{align}
\end{subequations}
Since these equations are linear with constant coefficients, one has
the plane wave solutions
\[
\omega'=e^{i(\bk\cdot\bx-\nu t)}\hat{\omega},\quad b'=e^{i(\bk\cdot\bx-\nu t)}\hat{b},\quad{\psi}'=e^{i(\bk\cdot\bx-\nu t)}\hat{{\psi}}.
\]
Further, from \eqref{eq: linear alpha tqg psi} we obtain
\[
-\,\hat{\omega}=(|\bk|^{2}+1)(\alpha|\bk|^{2}+1)\hat{{\psi}}.
\]
Substituting these solutions into the linearised equations we have
\[
\begin{aligned}(\nu-kU)\hat{\omega}-k(U+B-\beta)\hat{{\psi}} & =-k(U-H/2)\hat{b}\\
\left(\nu-kU\right)\hat{b} & =-kB\hat{{\psi}}
\end{aligned}
\]
where $k$ is the first component of $\bk$.
From the above we obtain a quadratic formula for the Doppler-shifted phase speed $C = C(\alpha):=\left(\nu(\alpha)-kU\right)/k$
\[
C^{2}(|\bk|^{2}+1)(\alpha|\bk|^{2}+1)+CX+Y=0.
\]
where $X:=U+B-\beta$
and $Y:=(U-H/2)B$. Thus, the dispersion relation for the thermal Rossby wave for TQG possesses two branches, 
corresponding to two different phase velocities,
\begin{equation}\label{eq: doppler shift phase speed}
C(\alpha)=\frac{-X\pm\sqrt{X^{2}-4Y(|\bk|^{2}+1)(\alpha|\bk|^{2}+1)}}
{2 (|\bk|^{2}+1)(\alpha|\bk|^{2}+1)}.
\end{equation}
Upon setting $\alpha=0$ in \eqref{eq: doppler shift phase speed}, we recover the Doppler-shifted phase speed of  thermal Rossby waves for the TQG system in \cite{holm2021stochastica}. 
{
When $\alpha=0$, $B=0$, and $U=0$ in \eqref{eq: doppler shift phase speed}, the remaining dispersion relation differs slightly from the dispersion relation for QG Rossby waves in non-dimensional variables. This is because the Casimirs in the Hamiltonian formulation of the TQG system differ from the those of standard QG.
}

As discussed in \cite{holm2021stochastica} for the TQG system, when $Y>0$,  the Doppler shifted phase velocity in \eqref{eq: doppler shift phase speed} for the linearised wave motion becomes complex at high wavenumber $|\bk| \gg 1$; namely for $(|\bk|^{2}+1)(\alpha|\bk|^{2}+1)\ge X^{2}/(4Y)$, for both branches of the dispersion relation. However, the growth-rate of the instability found from the imaginary part of $C(\alpha)$ in \eqref{eq: doppler shift phase speed} decays to zero as $O( |\bk|^{-2})$, for $|\bk|\gg1$. Indeed, the linear stability analysis leading to $C(\alpha)$ predicts that the maximum growth rate occurs at a finite wavenumber $|\bk|_{\max}$ which depends on the value of $\alpha$, beyond which the growth rate of the linearised TQG wave amplitude falls rapidly to zero. One would expect that simulated solutions of $\alpha$-TQG would be most active at the length-scale corresponding to $|\bk|_{\max}$, at which the linearised thermal Rossby waves are the most unstable. If this maximum activity length-scale is near the grid truncation size, then numerical truncation errors could cause additional numerical stability issues! We have experience such problems during our testing of the numerical algorithm for the TQG system. Even for equilibrium solution ``sanity" check tests, we have found that unless the time step was taken to be incredibly small, high wavenumber truncation errors have caused our numerical solutions to eventually blow up.

\begin{figure}[h!]
\centering
\includegraphics[width=0.6\textwidth]{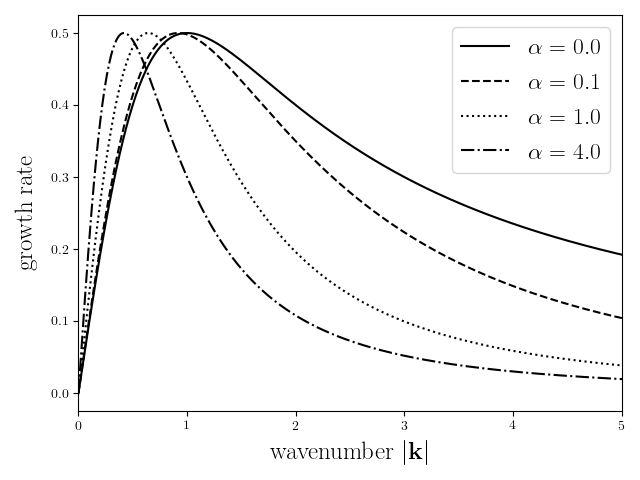}
\caption{An example of the growth-rate of linearised TQG waves as determined from the imaginary part of $C(\alpha)$ in equation \eqref{eq: doppler shift phase speed}, plotted for different values of $\alpha$. We observe that increasing $\alpha$ shifts the wavenumber at the maximum growth-rate to lower values.}
\label{fig: growth rate}
\end{figure}

The magnitude of $\alpha$  controls the unstable growth rate of linearised TQG waves at asymptotically high wavenumbers. That is, the presence of $\alpha$ regularises the wave activity at high wavenumbers, see Figure \ref{fig: growth rate}. Thus, from the perspective of numerics, the $\alpha$ regularisation is available to control numerical problems that may arise from the inherent model instability  at high wavenumbers, without the need for additional dissipation terms in the equation.

\subsection{Numerical example}\label{sec: numerics example}
The numerical setup we consider for this paper is as follows.
The spatial domain $\domain$ is an unit square with doubly periodic boundaries.
We chose to discretise $\domain$ using a grid that 
consists of $256\times256$ cells, i.e. $\cardinality(\domain_\delta) = 256\times256$. This was the maximum resolution we could computationally afford, to obtain results over a reasonable amount of time.

We computed $\alpha$-TQG solutions for the following values of $\alpha$ -- $\frac{1}{16^{2}},\frac{1}{32^{2}},\frac{1}{64^{2}},\frac{1}{128^{2}},\frac{1}{180^{2}},\frac{1}{220^{2}},\frac{1}{256^{2}}$ and $0$.
Note when $\alpha=0$ we get the TQG system.
For all cases, the following initial conditions were used,
\begin{align}
\omega(0,x,y) & =\sin(8\pi x)\sin(8\pi y)+0.4\cos(6\pi x)\cos(6\pi y)\label{eq: pv initial condition}\\
 & \quad+0.3\cos(10\pi x)\cos(4\pi y)+0.02\sin(2\pi y)+0.02\sin(2\pi x) \nonumber \\
b(0,x,y) & =\sin(2\pi y)-1, \label{eq: buoyancy initial condition}
\end{align}
see Figure \ref{fig: initial conditions} for illustrations,
as well as the following bathymetry and rotation fields
\begin{align}
h(x,y) & =\cos(2\pi x)+0.5\cos(4\pi x)+\frac{1}{2}\cos(6\pi x) \label{eq: bathymetry}\\
f(x,y) & =0.4\cos(4\pi x)\cos(4\pi y). \label{eq: rotation}
\end{align}
For time stepping, we used $\Delta t = 0.0005$ in all cases to facilitate comparisons. This choice satisfies the CFL condition for the $\alpha=0$ case. 

We computed each solution for $5000$ time steps\footnote{Go to https://youtu.be/a2a4xzft3Pg for a video of the full simulation.}. Figures \ref{fig: snapshot buoyancy}, \ref{fig: snapshot pv} and \ref{fig: snapshot velocity magnitude} show $\alpha$-TQG solution snapshots of buoyancy, potential vorticity and velocity magnitude respectively, at the $1280$'th and $2600$'th time steps, for $\alpha$ values $0, \frac1{128^2}, \frac1{64^2}$ and $\frac1{16^2}$. The interpretation of the regularisation parameter $\alpha$ is that its square root value corresponds to the fraction of the domain's length scale that get regularised.
At the $1280$'th time step, the flows are in early spin-up phase.
At the $2600$'th time step, although the flows are still in spin-up phase, much more flow features have developed.
The sub-figures illustrate via comparisons, how $\alpha$ controls the development of small scale features and instabilities. Increasing $\alpha$ leads to more regularisation at larger scales.

In view of Proposition \ref{prop:alphaConv}, we investigated numerically the convergence of $\alpha$-TQG to TQG using our numerical setup. Consider the relative error between TQG buoyancy and $\alpha$-TQG buoyancy, 
\begin{align}
e_b(t, \alpha) &:= \frac{\|b(t) - b^\alpha(t)\|_{H^1}}{\|b(t)\|_{H^1}}\label{eq: relative error buoyancy}
\end{align}
and the relative error between TQG potential vorticity and $\alpha$-TQG potential vorticity,
\begin{align}
    e_\omega(t, \alpha) &:= \frac{\|\omega(t) - \omega^\alpha(t)\|_{L^2}}{\|\omega(t)\|_{L^2}}.\label{eq: relative error pv}
\end{align}
The norms were chosen in view of Proposition \ref{prop:alphaConv}.
Figures \ref{fig: relative error buoyancy time series} and \ref{fig: relative error pv time series} show plots of $e_b(t, \alpha)$ and $e_\omega(t, \alpha)$
as functions of time only, for fixed $\alpha$ values $\frac{1}{16^2}, \frac{1}{32^2}, \frac1{64^2}, \frac1{128^2}$ and $\frac1{220^2}$, from the initial time up to and including the $2800$'th time step.
We observe that starting from $0$, the relative errors $e_b$ and $e_\omega$ initially increase over time but plateau at around and beyond the $2000$'th time step. 
Up to the $1400$'th time step, 
the plotted relative errors remain less than $1.0$, and are arranged in the ascending order of $\alpha$, i.e. smaller $\alpha$'s give smaller relative errors.

We note that, if we compare and contrast the relative error results with the solution snapshots of buoyancy and potential vorticity, particularly at the $2600$'th time step (shown in Figures \ref{fig: snapshot buoyancy 1.30} and \ref{fig: snapshot pv 1.30}), we observe "discrepancies" -- for lack of a better term -- between the results. For example, in Figure \ref{fig: snapshot buoyancy 1.30}, if we compare the $\alpha=\frac1{128^2}$ snapshot with the $\alpha=\frac1{16^2}$ one, the former
show small scale features that are much closer to those that exist in the reference TQG $\alpha=0$ solution. However, according to the relative error measurements, 
the two regularised solutions are more or less equivalent in their differences to the reference TQG solution at time step $2600$.

If we fix the value of the time parameter in $e_b(t,\alpha)$ and $e_\omega(t, \alpha)$, and vary $\alpha$, we can estimate the convergence rate of $\alpha$-TQG to TQG for our numerical setup. Proposition \ref{prop:alphaConv} predicts a convergence rate of $1$ in $\alpha$, using the $H^1$ norm on the buoyancy differences and the $L^2$ norm on the potential vorticity differences.
Note that, in \eqref{rateOfConv}, the left hand side evaluates a supremum over a given compact time interval. We see in Figures \ref{fig: relative error buoyancy time series} and \ref{fig: relative error pv time series} that the relative errors are monotonic up to around the $1500$'th time step mark. So for a given time point $T$ between the initial time and the $1500$'th time step, we assume we can evaluate $e_b$ and $e_\omega$ at $T$ to estimate the supremum over $[0,T]$. 

Figures \ref{fig: relative error convergence buoyancy} and \ref{fig: relative error convergence pv} show plots -- in log-log scale -- of $e_b$ and $e_\omega$ as functions of $\alpha$ only at the $600$'th, $800$'th, $1000$'th and $1200$'th time steps, for all the chosen $\alpha$ values. These numbers of time steps correspond to $T=0.3$, $T=0.4$, $T=0.5$ and $T=0.6$ respectively. We also plotted in Figures \ref{fig: relative error convergence buoyancy} and \ref{fig: relative error convergence pv} linear functions of $\alpha$ to provide reference order 1 slopes.
Comparing the results to the reference, we see that the theoretical convergence rate of 1 is attained for $T=0.3$ ($600$'th time step), $T=0.4$ ($800$'th time step) and $T=0.5$ ($1000$'th time step). However for $T=0.6$ ($1200$'th time step), the convergence rates are at best order $1/2$.

\begin{figure}
\centering
\includegraphics[width=0.6\textwidth]{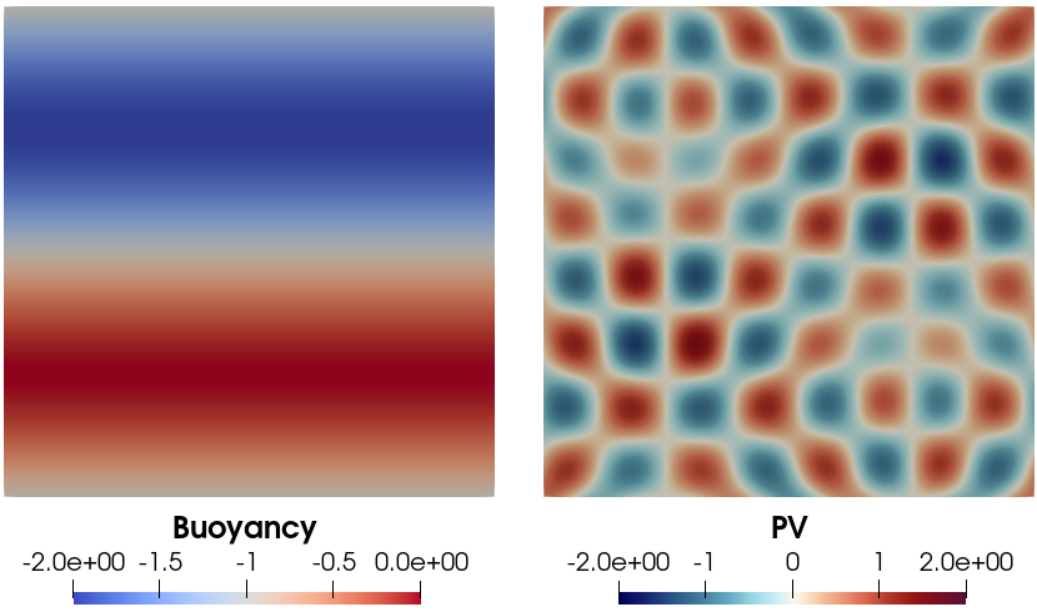}
\caption{Initial conditions -- buoyancy \eqref{eq: buoyancy initial condition} on the left, and potential vorticity \eqref{eq: pv initial condition} on the right. The different colours of the PV plot correspond to positive and negative values of PV, which can be interpreted as clockwise and anticlockwise eddies respectively.}
\label{fig: initial conditions} 
\end{figure}

\begin{figure}[t]
\centering
 \begin{subfigure}[b]{0.6\textwidth}
     \centering
     \includegraphics[trim={340 25 196 5},clip,width=\textwidth]{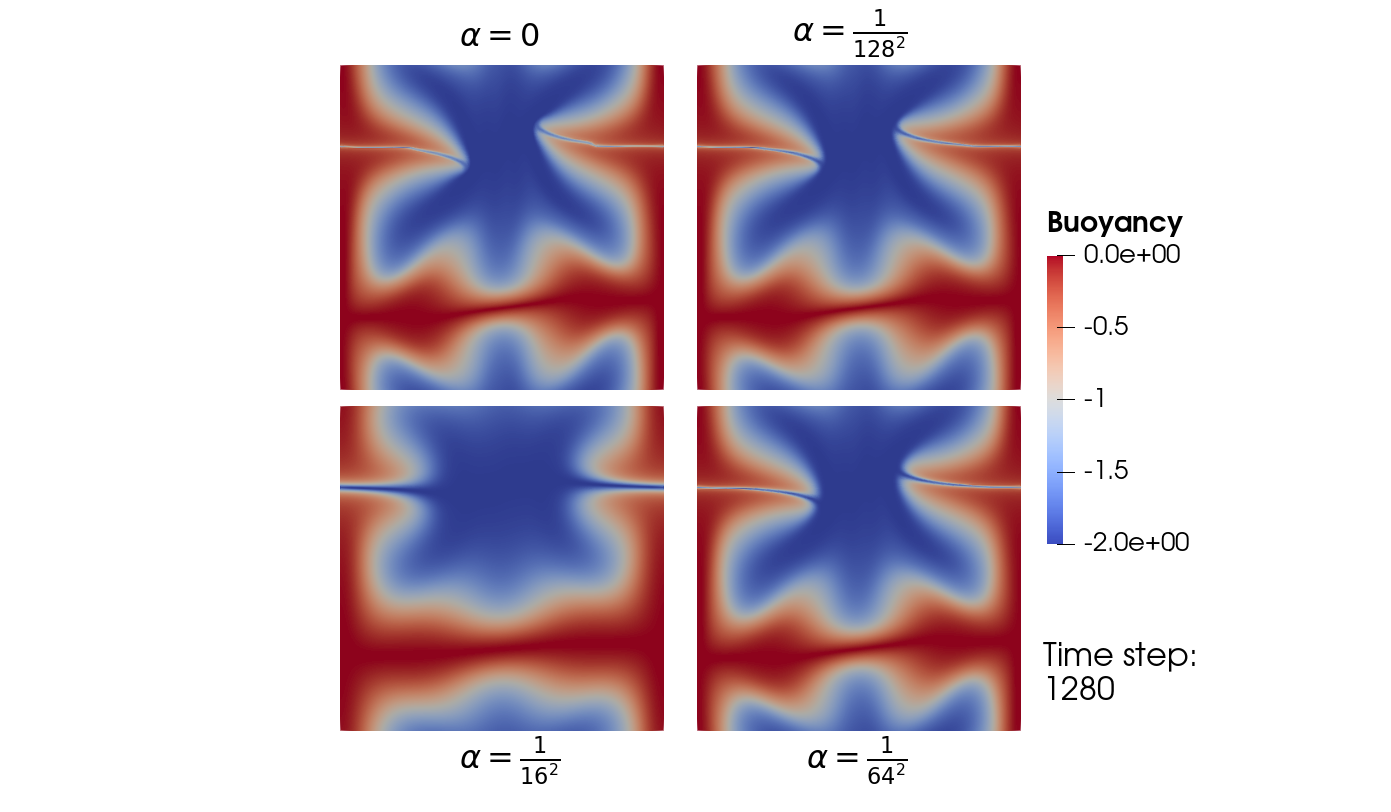}
     \caption{}
     \label{fig: snapshot buoyancy 0.64}
 \end{subfigure}
 \vfill
 \begin{subfigure}[b]{0.6\textwidth}
     \centering
     \includegraphics[trim={340 25 196 5},clip, width=\textwidth]{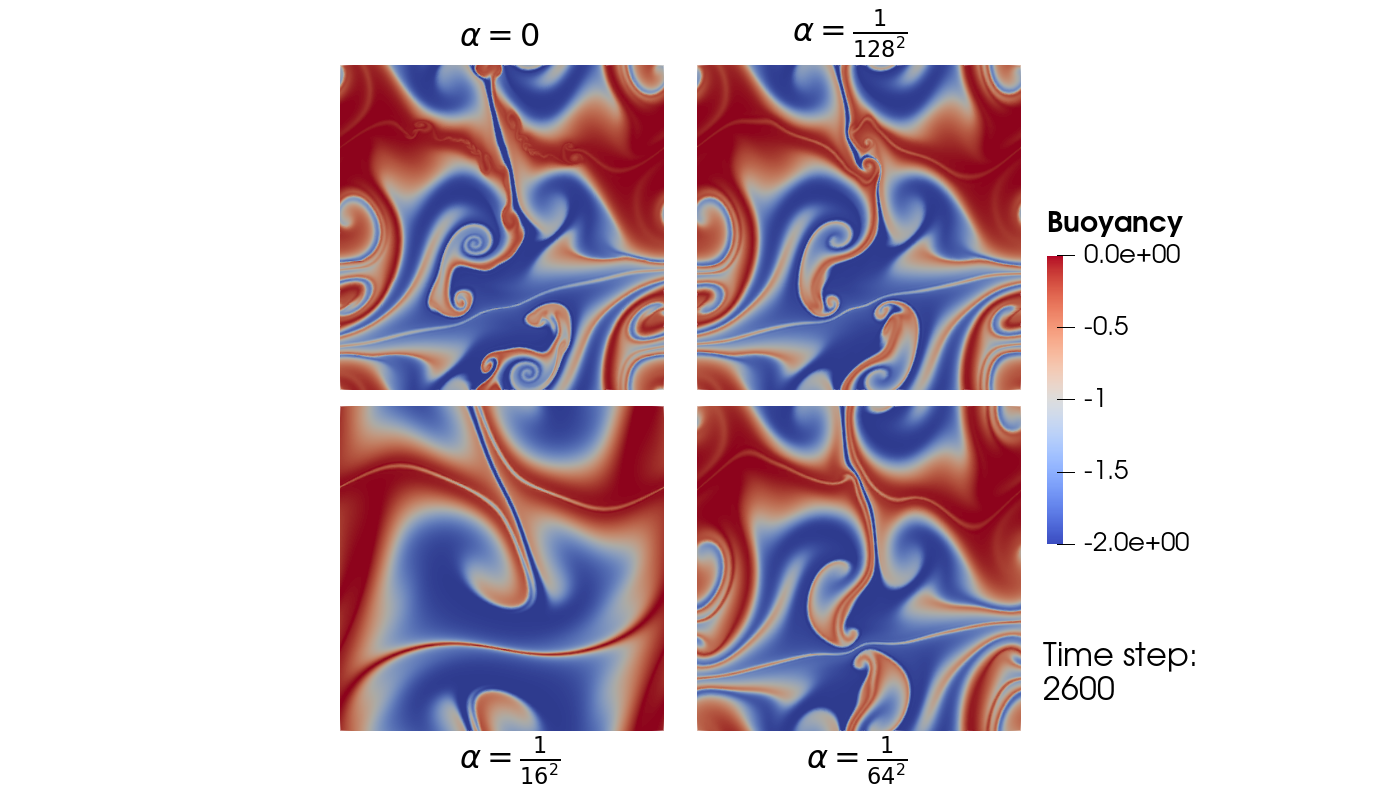}
     \caption{}
     \label{fig: snapshot buoyancy 1.30}
 \end{subfigure}
 \caption{Comparisons of solution snapshots of the $\alpha$-TQG buoyancy field that correspond to four different values of $\alpha$, at two different points in time. Potential vorticity and velocity magnitude snapshots of the same solutions are shown in Figure \ref{fig: snapshot pv} and Figure \ref{fig: snapshot velocity magnitude} respectively. Shown in each sub-figure are the results corresponding to $\alpha = 0$ (top left), $\alpha = \frac1{128^2}$ (top right), $\alpha = \frac1{64^2}$ (bottom right) and $\alpha = \frac1{16^2}$ (bottom left). When $\alpha=0$, the solution is of the TQG system. Sub-figure (A) shows the solutions at the $1280$'th time step, or equivalently when $t=0.64$. Sub-figure (B) shows the solutions at the $2600$'th time step, or equivalently when $t=1.30$. The flows in both sub-figures are at different stages of spin-up, with the flow at $t=1.30$ showing more developed features. The $\alpha$ parameter is  interpreted as the fraction of the domain's length-scale squared value at which regularisation is applied. Due to regularisation, the flows do develop differently. Nevertheless, we observe that as $\alpha$ gets smaller, the $\alpha$-TQG flow features converge to that of the TQG flow.}
 \label{fig: snapshot buoyancy}
\end{figure}

\begin{figure}[t]
\centering
 \begin{subfigure}[b]{0.6\textwidth}
     \centering
     \includegraphics[trim={340 25 196 5},clip,width=\textwidth]{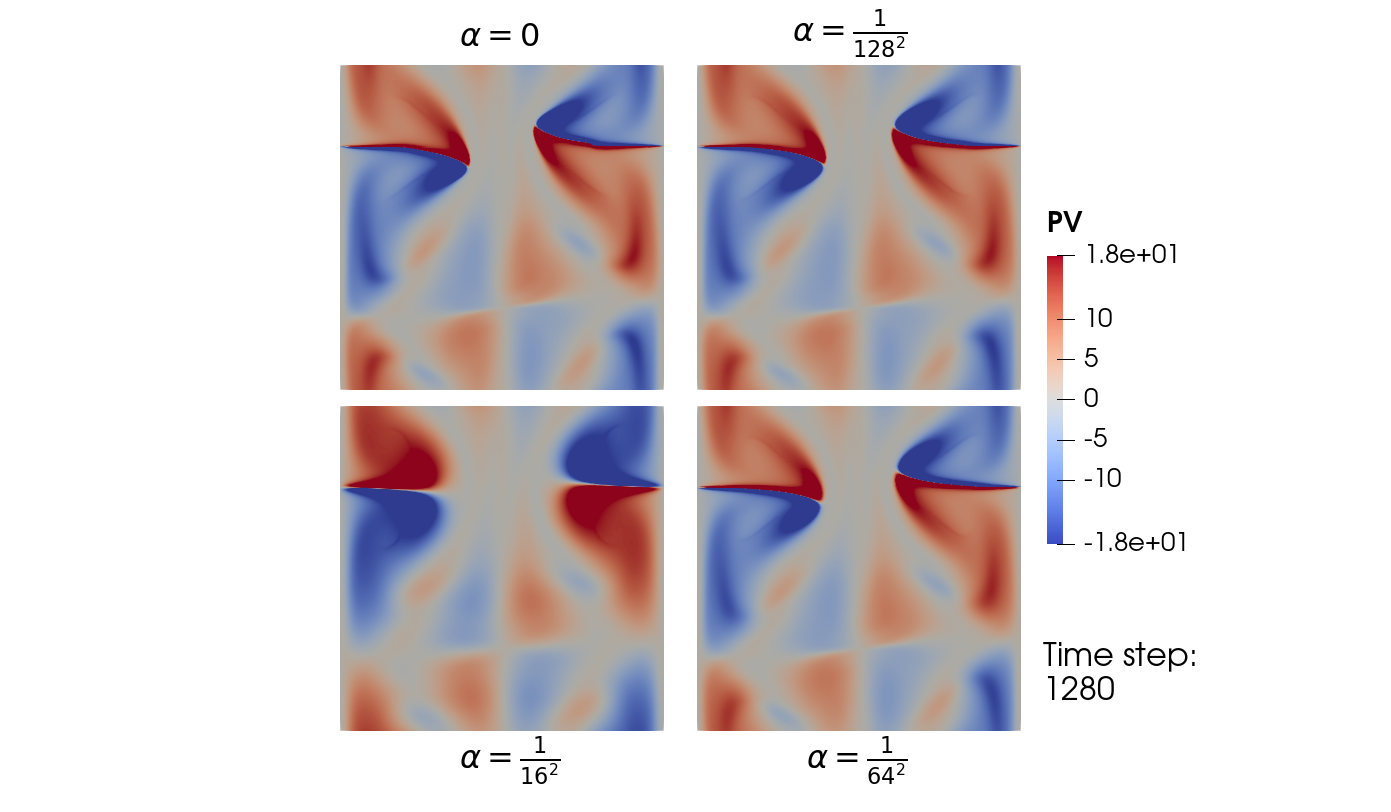}
     \caption{}
     \label{fig: snapshot pv 0.64}
 \end{subfigure}
 \vfill
 \begin{subfigure}[b]{0.6\textwidth}
     \centering
     \includegraphics[trim={340 25 196 5},clip, width=\textwidth]{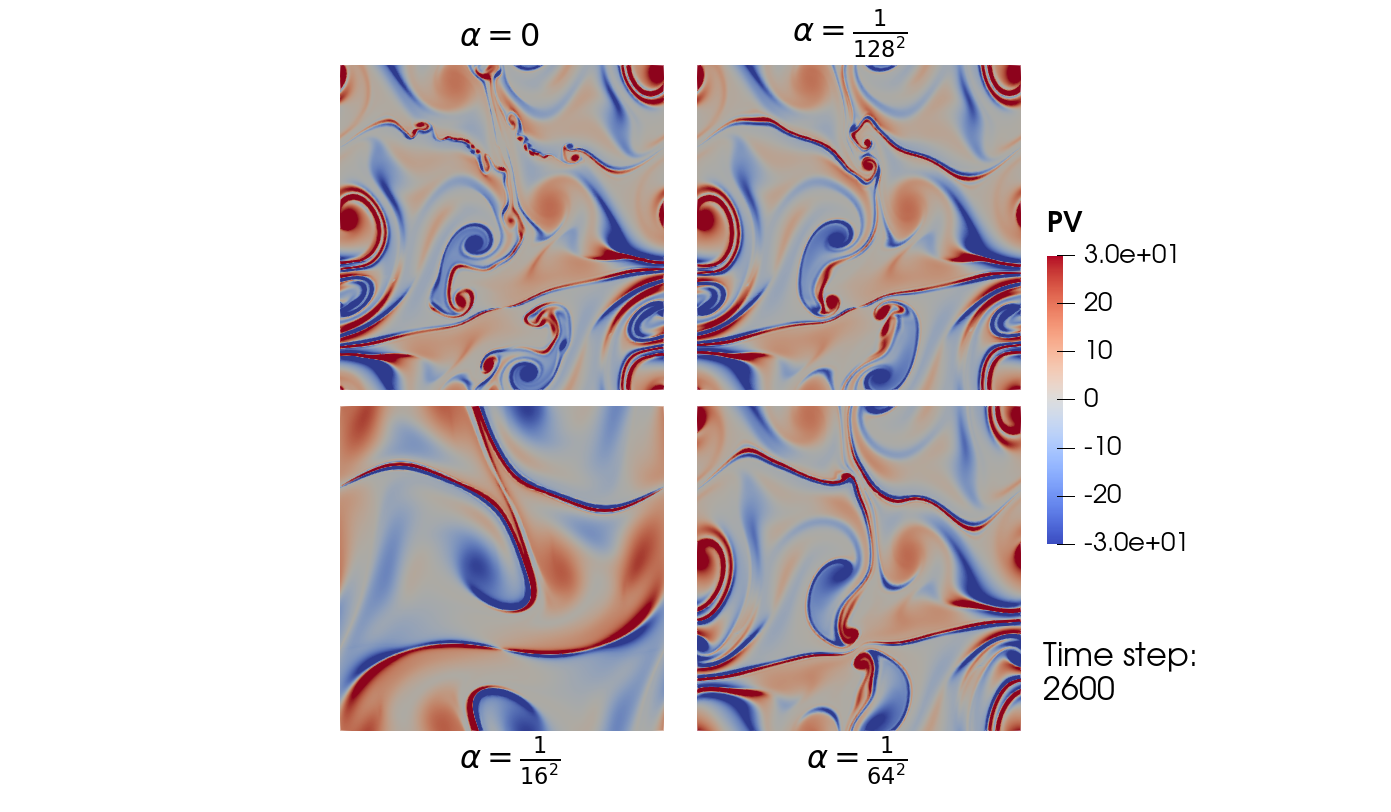}
     \caption{}
     \label{fig: snapshot pv 1.30}
 \end{subfigure}
 \caption{Comparisons of solution snapshots of the $\alpha$-TQG potential vorticity field that correspond to four different values of $\alpha$, at two different points in time. See the caption of Figure \ref{fig: snapshot buoyancy} for explanations of the arrangements of these plots. Buoyancy and velocity magnitude snapshots of the same solutions are shown in Figure \ref{fig: snapshot buoyancy} and Figure \ref{fig: snapshot velocity magnitude} respectively. As in Figure \ref{fig: initial conditions}, the different colours of the PV field correspond to positive and negative values of PV, which can be interpreted as clockwise and anticlockwise eddies respectively. In sub-figure (B), we observe more clearly the regularisation effects of $\alpha$. As $\alpha$ 
 increases, the flows develop in different ways -- features at smaller scales no longer develop, and larger features evolve differently as a result.
 Nevertheless, we observe that as $\alpha$ gets smaller, the $\alpha$-TQG flow features converge to that of the TQG flow.}
 \label{fig: snapshot pv}
\end{figure}

\begin{figure}[t]
\centering
 \begin{subfigure}[b]{0.6\textwidth}
     \centering
     \includegraphics[trim={340 25 151 5},clip,width=\textwidth]{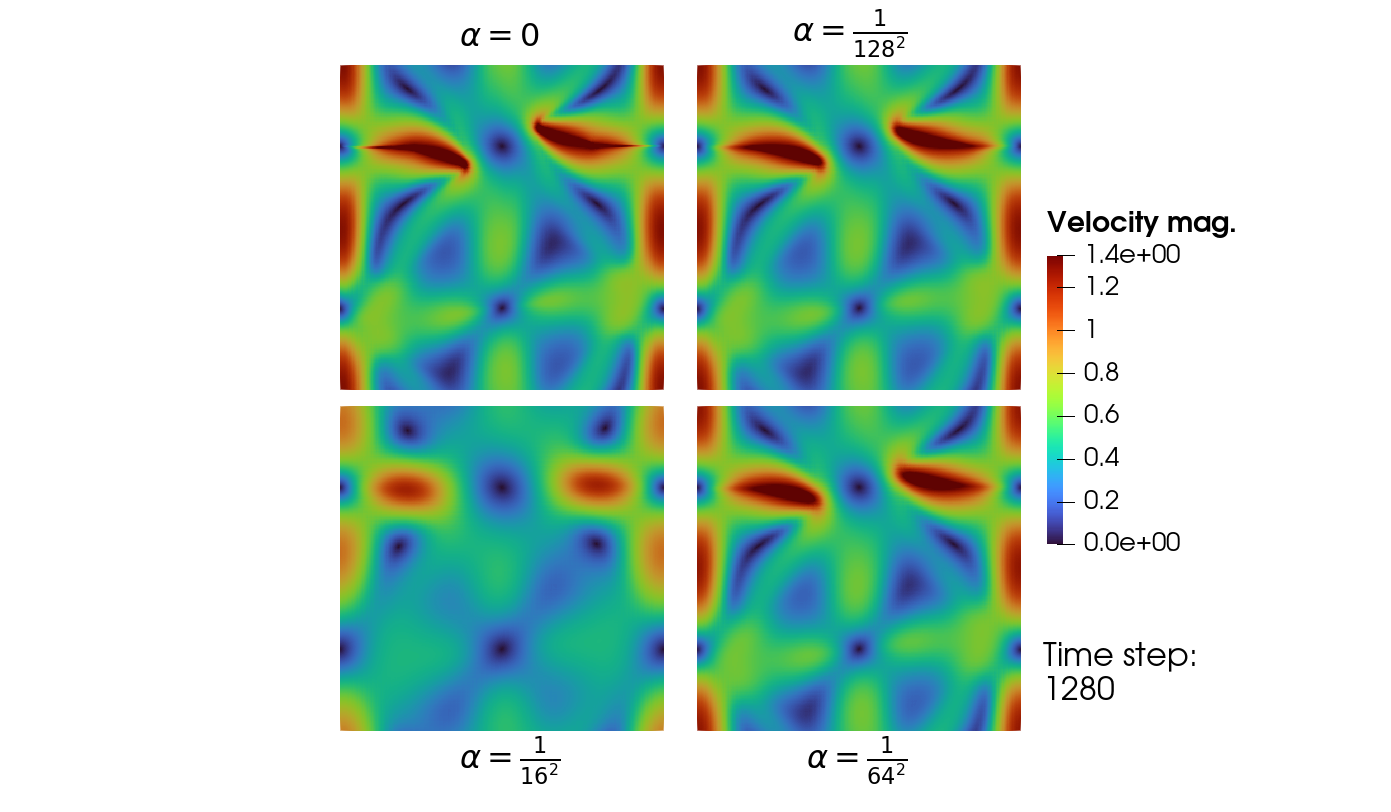}
     \caption{}
     \label{fig: snapshot velocity magnitude 0.64}
 \end{subfigure}
 \vfill
 \begin{subfigure}[b]{0.6\textwidth}
     \centering
     \includegraphics[trim={340 25 151 5},clip, width=\textwidth]{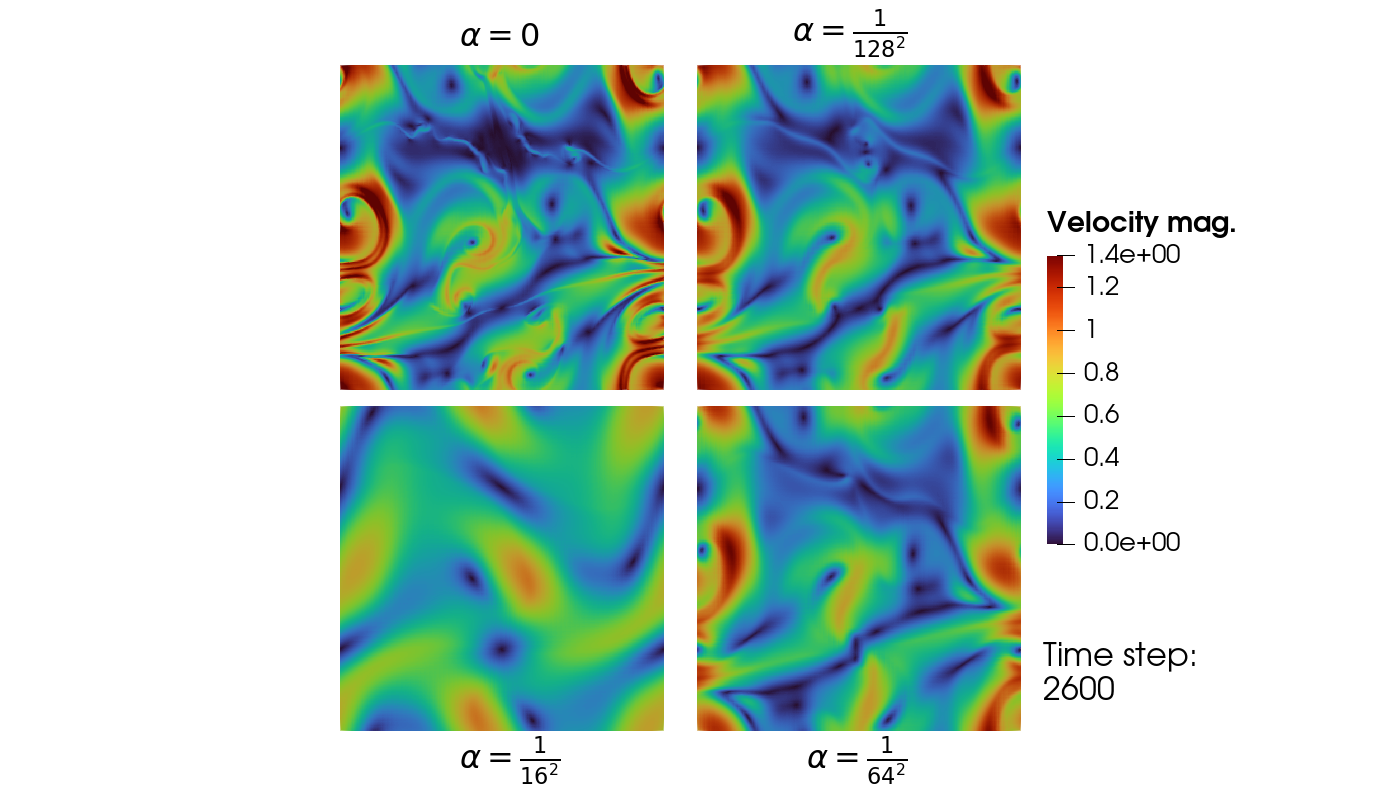}
     \caption{}
     \label{fig: snapshot velocity magnitude 1.30}
 \end{subfigure}
 \caption{Comparisons of solution snapshots of the $\alpha$-TQG velocity magnitudes that correspond to four different values of $\alpha$, at two different points in time. See the caption of Figure \ref{fig: snapshot buoyancy} for explanations of the arrangements of these plots. 
 Buoyancy and potential vorticity snapshots of the same solutions are shown in Figure \ref{fig: snapshot buoyancy} and Figure \ref{fig: snapshot pv} respectively. 
 Using the same scale for colouring, we observe in both sub-figures the strength of the colours weaken as $\alpha$ increases. This indicates smoothing of large velocity magnitudes.
 Additionally, in sub-figure (B), we observe how features at smaller scales get smoothed out. In particular, in the $\alpha=\frac1{16^2}$ plot, we see that essentially only large scale vorticies remain. Hence, considering the velocity field as a part of the system's nonlinear advection operator, these figures help us to better visualise how the flow features developed in Figure \ref{fig: snapshot buoyancy} and Figure \ref{fig: snapshot pv}. }
 \label{fig: snapshot velocity magnitude}
\end{figure}

\begin{figure}[t]
\centering
 \begin{subfigure}[b]{0.7\textwidth}
     \centering
     \includegraphics[width=\textwidth]{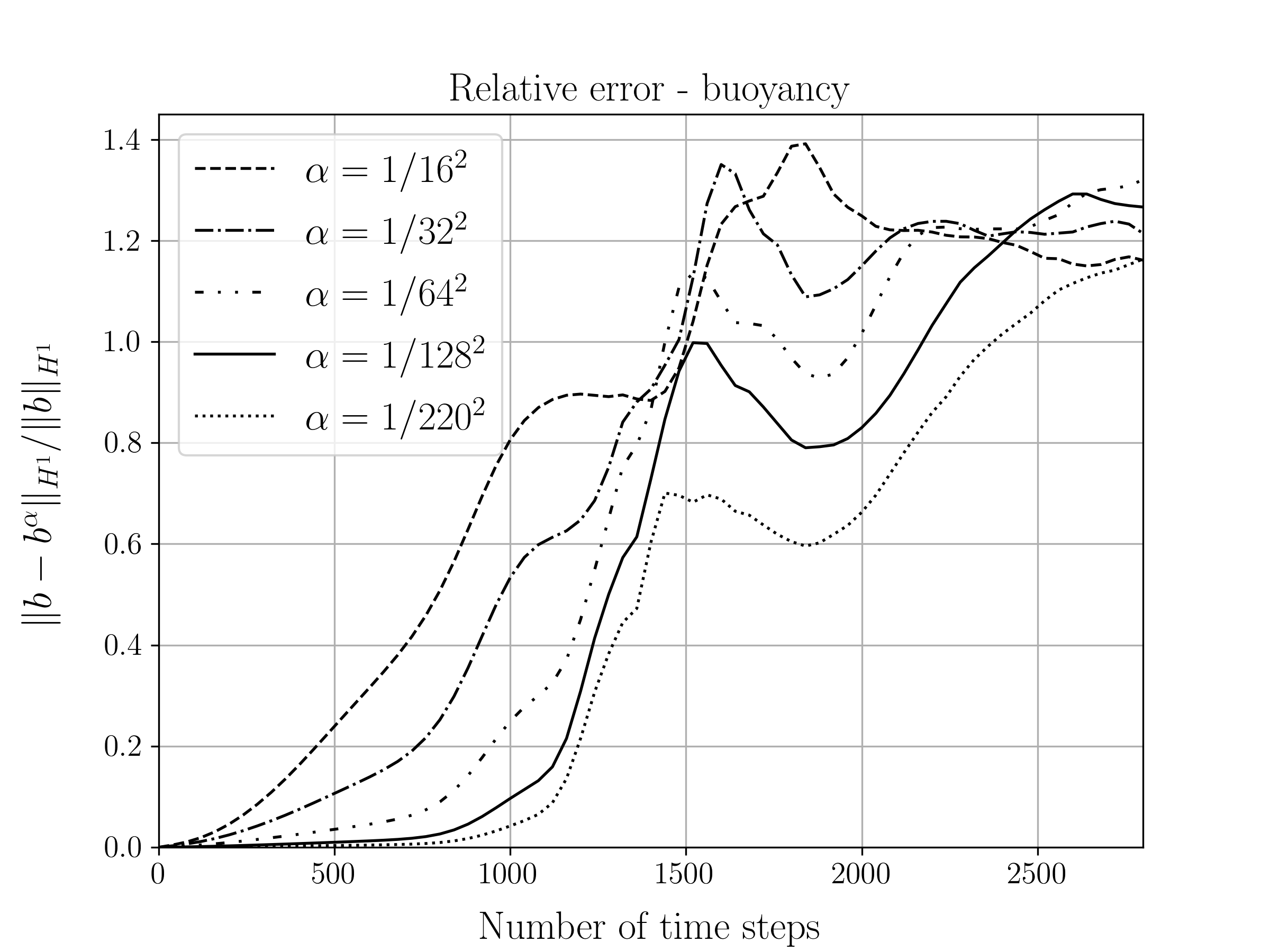}
     \caption{
     }
     \label{fig: relative error buoyancy time series}
 \end{subfigure}
 \begin{subfigure}[b]{0.7\textwidth}
     \centering
     \includegraphics[width=\textwidth]{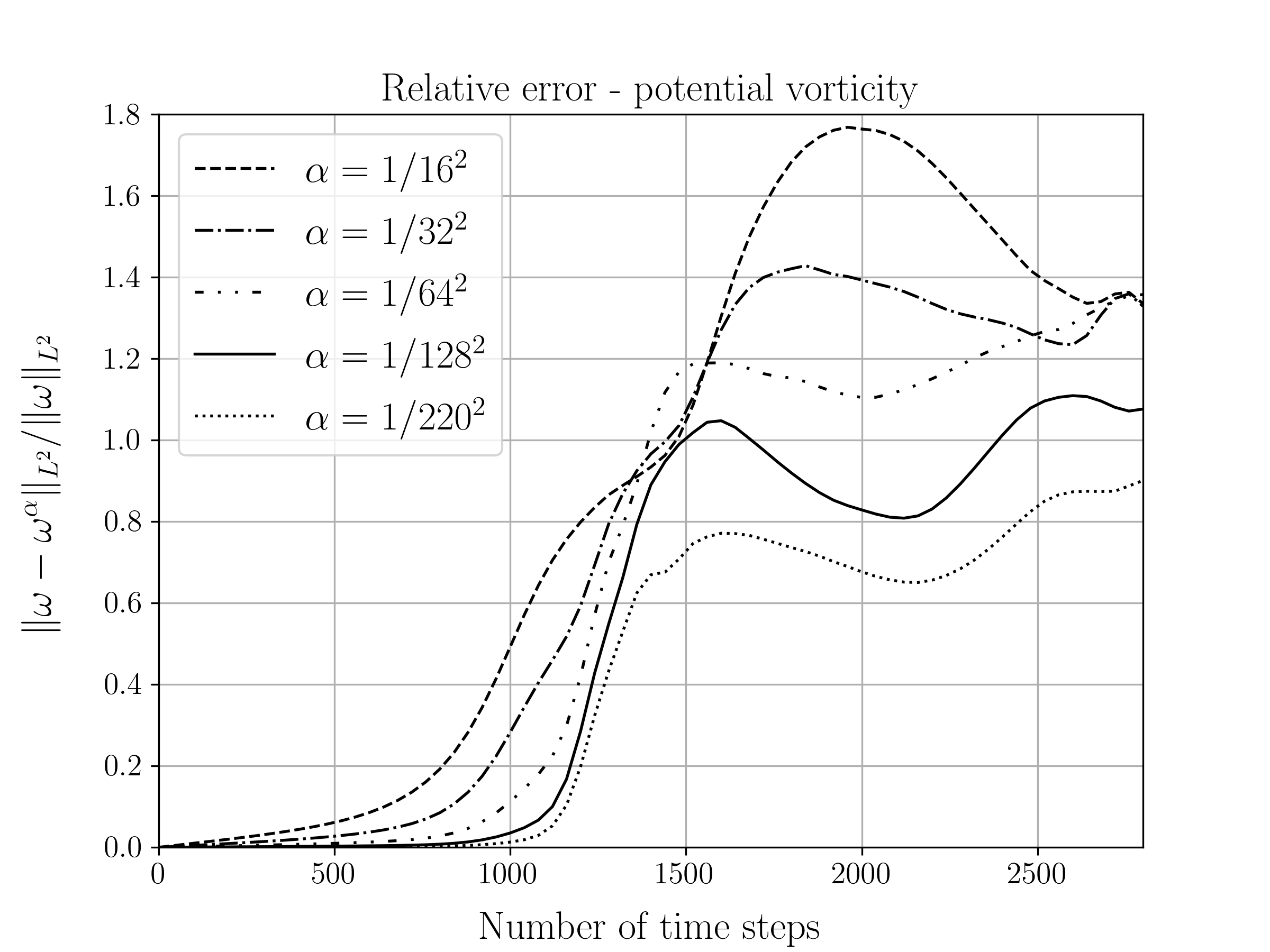}
     \caption{
     }
     \label{fig: relative error pv time series}
 \end{subfigure}
 \caption{Sub-figures (A) and (B) show, respectively, plots of the relative error functions $e_b(t, \alpha)$ and $e_\omega(t, \alpha)$
 as functions of time only, for the fixed $\alpha$ values $\frac{1}{16^2}$, $\frac1{32^2}$, $\frac1{64^2}$, $\frac1{128^2}$ and $\frac1{220^2}$. See equations \eqref{eq: relative error buoyancy} and \eqref{eq: relative error pv} for the definitions of $e_b$ and $e_\omega$ respectively. The plots are shown from $t=0$ up to and including $t=1.4$, which is equivalent to $2800$ time steps using $\Delta t = 0.0005$.
 We observe that, starting from $0$, all plots of  $e_b$ and $e_\omega$ increase initially, and plateau at around the $2000$'th time step. Further, up to the $1400$'th time step the plotted relative errors remain less than $1.0$ and are arranged in the ascending order of $\alpha$. Relating these relative error results at the $2600$'th time step to the solution snapshots at the same time point (shown in figures \ref{fig: snapshot buoyancy 1.30} and \ref{fig: snapshot pv 1.30}), we see that although the snapshots show convergence of flow features, the relative error values suggest all the $\alpha$-TQG solutions are more or less equally far away from the TQG flow.
 }
  \label{fig: relative errors time series}
\end{figure}

\begin{figure}[t]
\centering
 \begin{subfigure}[b]{0.7\textwidth}
     \centering
     \includegraphics[width=\textwidth]{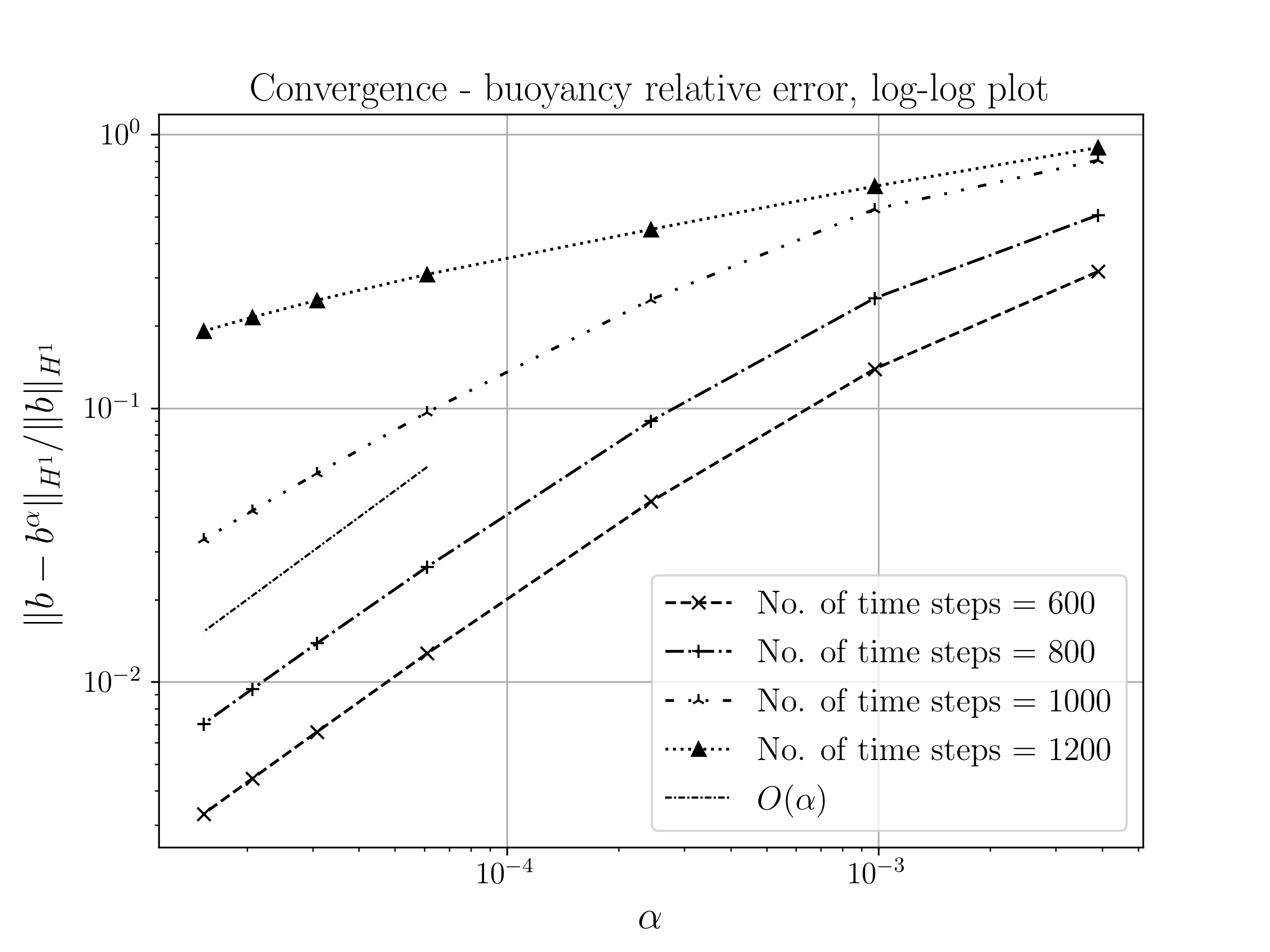}
     \caption{}
     \label{fig: relative error convergence buoyancy}
 \end{subfigure}
 \begin{subfigure}[b]{0.7\textwidth}
     \centering
     \includegraphics[width=\textwidth]{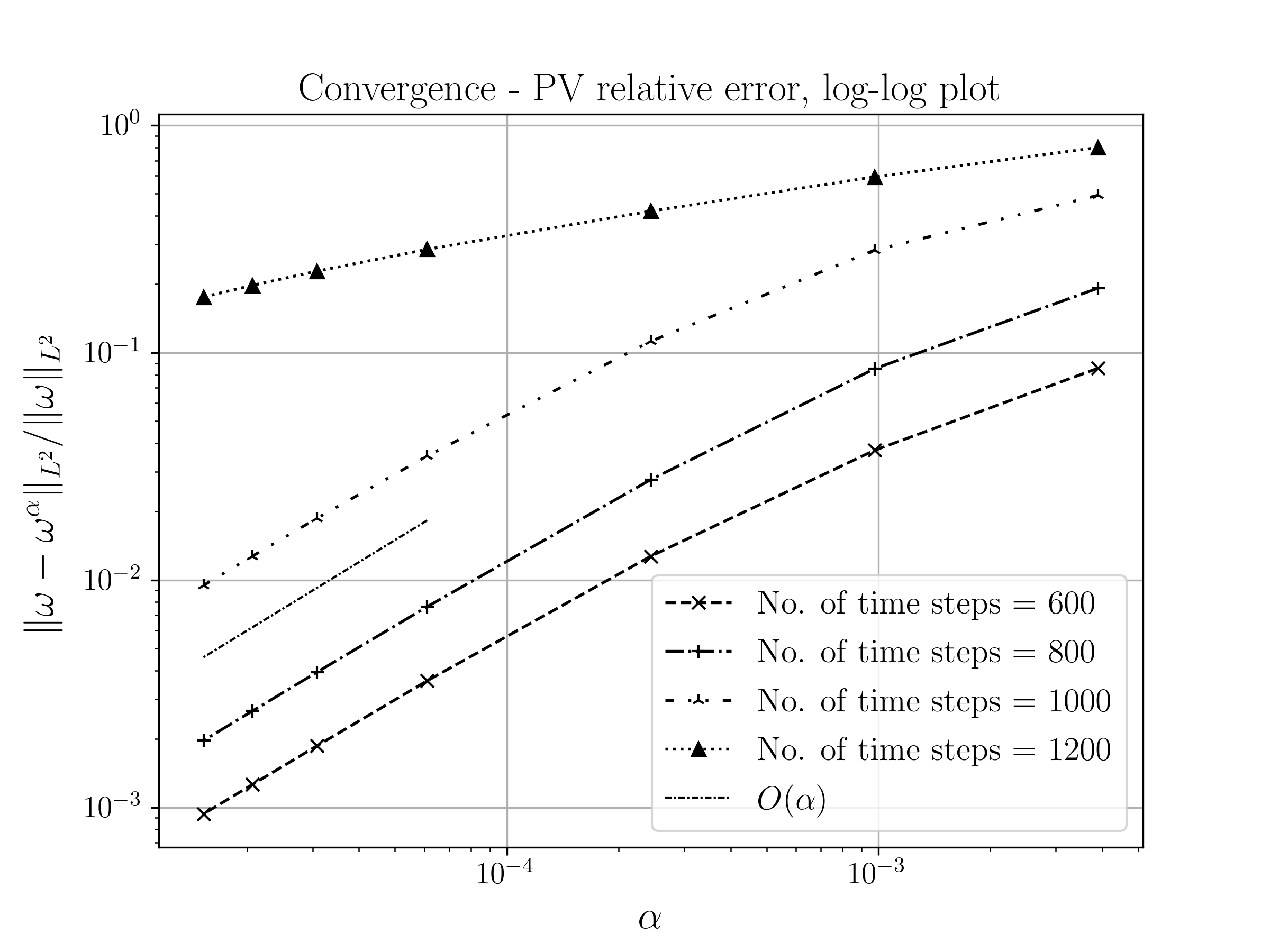}
     \caption{}
     \label{fig: relative error convergence pv}
 \end{subfigure}
 \caption{
 Sub-figures (A) and (B) show, respectively, plots in log-log scale of the relative error functions $e_b(t, \alpha)$, and $e_\omega(t, \alpha)$
 as functions of $\alpha$ only, at the fixed time values $t=0.3$ (equivalently at the $600$'th time step), $t=0.4$ (equivalently at the $800$'th time step), $t=0.5$ (equivalently at the $1000$'th time step) and $t=0.6$ (equivalently at the $1200$'th time step). 
  In view of Proposition \ref{prop:alphaConv}, we assumed that, based on the initial monotonicity of the relative errors (see Figure \ref{fig: relative errors time series}), the supremum over $[0,T]$ in \eqref{rateOfConv} can be estimated by evaluating $e_b$ and $e_\omega$ at $T$.
 Proposition \ref{prop:alphaConv} predicts that up to a certain time, the rate of convergence of $e_b$ and $e_\omega$ to $0$ should be no less than order 1 in $\alpha$.
 In both sub-figures we have plotted, as the reference for comparison, linear functions of  $\alpha$. Thus, comparing slopes to the reference, we see that numerically we get order $1$ convergence for $t=0.3$, $t=0.4$ and $t=0.5$. However, for $t=0.6$, the rate of convergence is no more than $1/2$.
 }
\end{figure}


\section{Conclusion and outlook}
In conclusion, we have formulated the thermal quasi-geostrophic (TQG) model equations in the regime of approximations relevant to GFD and we have explored their analytical and numerical properties. In respect to the analytical properties, we have shown that the TQG model and its $\alpha$-regularized version, the $\alpha$-TQG model, admit  unique local strong solutions that are stable in a larger space with weaker norm, that both models have a maximum time of existence, solutions of the latter model converge to solutions of the former model on any time interval in which a solution to the former lives, and we have also determined conditions under which the solution of the $\alpha$-TQG will blow up. 

With respect to numerics, we described our discretisation methods for approximating TQG and $\alpha$-TQG solutions and showed that the FEM semi-discrete scheme conserves numerical energy in the TQG case.
Using example simulation results, we numerically verified that $\alpha$-TQG solutions converge to that of the TQG system and attains 
the order $1$ in $\alpha$ convergence rate predicted by
Proposition \ref{prop:alphaConv}.
Additionally, we derived a dispersion relation for the linearised $\alpha$-TQG system and showed how $\alpha$-regularisation could be used to control the development of high wavenumber instabilites. 
Given that we have convergence of $\alpha$-TQG solutions to TQG, the linear stability results can be viewed as generalisations of those shown in \cite{holm2021stochastica} for the TQG system.

We now end this section with some open problems.
\begin{itemize}
    \item 
Can one construct a global-in-time strong solution (or a global weak solution) of either the TQG model equations  \eqref{ce}--\eqref{constrt}, or the $\alpha$-TQG model model equations \eqref{ceAlpha}--\eqref{constrtAlpha}?
\item 
Can one give a Beale--Kato--Majda condition for the blowup of a strong solution to the TQG in terms of a single unknown? That is, is there a BKM condition in terms of either $b$ or $\omega$ (or $\bu$) that does not require a combination of both variables?
\item 
As discussed in Section \ref{subsec-TQGderived}, an application of the Stochastic Advection by Lie Transport (SALT) approach to the Hamiltonian formulation of the TQG equations results in a stochastic Hamiltonian formulation of TQG equations in \eqref{ceZero}-\eqref{constrtZero}. This stochastic version of TQG preserves an infinite family of integral conserved quantities, as is shown in \cite{holm2021stochastica}. The SALT version of TQG represents an outstanding challenge for uncertainty quantification and data assimilation which will surely spur us on to further investigations of these problems. 
\item 
As also discussed in Section \ref{subsec-TQGderived}, the parallels between TQG and the Rayleigh-B\'enard equations should also attract our attention for the consideration of a SALT version of the deterministic Rayleigh-B\'enard convection equations.

\end{itemize}

\section*{Acknowledgements}
This work has been partially supported by European Research Council (ERC) Synergy grant STUOD-DLV-856408. We would like to thank our friends and colleagues for their encouraging comments, especially W. Bauer, F. J. Beron-Vera, B. Chapron, L. Cope, C. J. Cotter,  E. Dinvay, O. Lang, E. Memin, A. Radomska - Botelho Moniz, S. Takao. 

\section{Appendix}
\label{sec:appendix}
\noindent We present a result in this section,  which along with its proof, can be found in \cite{kato1984nonlinear}. Our construction of local solutions relies on this result and thus, we present it here for the sake of completeness.
\\
We consider the abstract equation
\begin{align}
\label{abstractEqXX}
\frac{\partial}{\partial t}u 
+
\mathcal{A}(t,u)
 =0,
\qquad
t\geq0
 ,
\qquad
u
\big\vert_{t=0}
=
u_0,
\end{align}
where $\mathcal{A}$ is a nonlinear operator.
\begin{theorem}
\label{thm:appen}
Let $\{V,H,X \}$ be real separable Banach spaces such that
\begin{itemize}
\item the embeddings $V\hookrightarrow H \hookrightarrow X$ are continuous and dense;
\item $H$ is a Hilbert space;
\item There is a continuous, nondegenerate bilinear form $(\, ,\,)$ on ${X \times V}$ such that $( u,v )=\langle u,v \rangle_H$ for $u\in H$ and $v\in V$.
\end{itemize}
Let $\mathcal{A}$ be a (sequentially) weakly continuous  map on $[0,T_*]\times H$ into $X$ such that
\begin{align}
\label{bilinear}
(\mathcal{A}(t,v), v)\geq -\rho(\Vert v \Vert_H^2)\qquad \text{for}\quad t\in[0, T_*], \quad v\in V,
\end{align}
where $\rho(r)\geq0$ is a monotone increasing function of $r\geq0$. Then for any $u_0\in H$, there exists $T>0$, $T\leq T_*$, and a solution $u$ of \eqref{abstractEqXX} in the class
\begin{align}
\label{cw0T}
u\in C_w([0,T]; H) \cap C_w^1([0,T];X).
\end{align}
Moreover,
\begin{align*}
\Vert u(t) \Vert_H^2 \leq r(t), \qquad t\in [0,T],
\end{align*}
where $r$ is a monotone increasing function on $[0,T]$; $T$ and $r$ can be chosen so as to depend only on $\rho$ and $\Vert u_0 \Vert_H$.
\end{theorem}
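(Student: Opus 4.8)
The plan is to construct the solution by a Galerkin scheme and then pass to the limit using the \emph{a priori} bound extracted from the coercivity hypothesis \eqref{bilinear}. Since $H$ is a separable Hilbert space and $V \hookrightarrow H$ is dense, I would first fix a countable family $\{e_j\}_{j\in\mathbb{N}} \subset V$ that is orthonormal in $H$ and whose finite span is dense in $H$ (obtained by Gram--Schmidt, in the $H$-inner product, applied to a countable dense subset of $V$; finite $H$-orthonormalizations of $V$-vectors remain in $V$). Writing $H_n := \mathrm{span}\{e_1,\dots,e_n\}$, the approximate problem is to find $u_n(t) \in H_n$ with $(\partial_t u_n, e_j) + (\mathcal{A}(t,u_n), e_j) = 0$ for $j=1,\dots,n$ and $u_n(0) = P_n u_0$, where $P_n$ is the $H$-orthogonal projection onto $H_n$. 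Because $\mathcal{A}$ is sequentially weakly continuous it is in particular continuous on the finite-dimensional space $H_n$, so this is a system of ODEs with continuous right-hand side, and Peano's theorem provides a local solution $u_n$.

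The decisive estimate comes from testing with $u_n$ itself. Since $u_n(t) \in H_n \subset V$, the compatibility $(w,v) = \langle w,v\rangle_H$ on $H \times V$ gives $(\partial_t u_n, u_n) = \tfrac12 \tfrac{d}{dt}\Vert u_n \Vert_H^2$, whence the Galerkin identity together with \eqref{bilinear} yields
\[
\frac{d}{dt}\Vert u_n(t) \Vert_H^2 = -2\big(\mathcal{A}(t,u_n),u_n\big) \leq 2\rho\big(\Vert u_n(t)\Vert_H^2\big).
\]
Comparing this differential inequality with the ODE $\dot r = 2\rho(r)$, $r(0) = \Vert u_0 \Vert_H^2$, and using $\Vert u_n(0)\Vert_H^2 = \Vert P_n u_0\Vert_H^2 \leq \Vert u_0\Vert_H^2$ together with the monotonicity of $\rho$, I would conclude $\Vert u_n(t) \Vert_H^2 \leq r(t)$ on the maximal interval $[0,T]$ of existence of $r$, with $T \leq T_*$ and both $T$ and $r$ depending only on $\rho$ and $\Vert u_0 \Vert_H$. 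The standard continuation argument then promotes each $u_n$ to a solution on all of $[0,T]$.

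With the uniform bound in $L^\infty(0,T;H)$ in hand, I would next extract limits. A sequentially weakly continuous map on the reflexive space $H$ sends bounded sets to bounded sets (otherwise a weakly convergent subsequence, whose image is weakly convergent hence bounded, yields a contradiction), so $\{\mathcal{A}(t,u_n)\}$ is bounded in $X$ and therefore $\{\partial_t u_n\}$ is bounded in $X$. This furnishes equicontinuity of $t \mapsto u_n(t)$ into $X$, which, combined with the $H$-bound and a diagonal Arzel\`a--Ascoli argument in the weak topology, gives a subsequence and a limit $u$ with $u_n(t) \rightharpoonup u(t)$ in $H$ for every $t \in [0,T]$ and $u \in C_w([0,T];H)$. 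Passing to the limit in the integrated identity $(u_n(t), e_j) = (u_n(0), e_j) - \int_0^t (\mathcal{A}(\tau,u_n(\tau)), e_j)\, d\tau$ — using weak continuity of $\mathcal{A}$ on the integrand and dominated convergence in time — produces the weak formulation of \eqref{abstractEqXX} against each $e_j$, hence against a dense set, so $u$ solves the equation; then $\partial_t u = -\mathcal{A}(t,u) \in X$ is weakly continuous in $t$, giving $u \in C_w^1([0,T];X)$, while weak lower semicontinuity of $\Vert\cdot\Vert_H$ upgrades the bound to $\Vert u(t)\Vert_H^2 \leq r(t)$.

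The main obstacle is the limit passage in the nonlinear term with only weak convergence of $\{u_n\}$ in $H$ and no strong (Rellich-type) compactness available. The scheme is engineered precisely so that the weak convergence $u_n(t) \rightharpoonup u(t)$ delivered by the energy bound is matched by the hypothesis that $\mathcal{A}$ is \emph{weakly} continuous into $X$; the technical heart is therefore to secure pointwise-in-time weak convergence (rather than only weak-$\ast$ convergence in $L^\infty_t H$) by coupling the uniform $H$-bound with the $X$-equicontinuity of $\partial_t u_n$. A related subtlety is justifying the energy identity $(\partial_t u, u) = \tfrac12 \tfrac{d}{dt}\Vert u \Vert_H^2$ at the level of the limit, which is exactly what the compatible nondegenerate bilinear form linking $X \times V$ to the $H$-inner product is there to provide, and which ensures the \emph{a priori} bound survives the passage to the limit.
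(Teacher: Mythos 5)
The paper does not actually prove Theorem \ref{thm:appen}: it is quoted verbatim from Kato--Lai \cite{kato1984nonlinear}, and the appendix explicitly defers the proof to that reference. Your Galerkin construction is, in substance, the classical proof of this result (and essentially the route taken in \cite{kato1984nonlinear} itself): orthonormalize a countable dense subset of $V$ in $H$, solve the projected ODE system by Peano, extract the a priori bound $\Vert u_n(t)\Vert_H^2\leq r(t)$ from \eqref{bilinear} by comparison with $\dot r=2\rho(r)$, and pass to the limit using the sequential weak continuity of $\mathcal{A}$ together with weak lower semicontinuity of the $H$-norm. The hypotheses are used exactly where they should be: the compatibility $(w,v)=\langle w,v\rangle_H$ on $H\times V$ legitimizes testing with $u_n\in H_n\subset V$, and your observation that a sequentially weakly continuous map from the reflexive space $H$ into $X$ is bounded on bounded sets (via Banach--Steinhaus) is the correct way to get the uniform $X$-bound on $\mathcal{A}(t,u_n(t))$.

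One step does not follow as written: from the boundedness of $\{\mathcal{A}(t,u_n)\}$ in $X$ you conclude that $\{\partial_t u_n\}$ is bounded in $X$ and hence that $t\mapsto u_n(t)$ is equicontinuous into $X$. But $\partial_t u_n=-Q_n\mathcal{A}(t,u_n)$, where $Q_n w=\sum_{j\leq n}(w,e_j)e_j$ is the projection onto $H_n$ realized through the $X\times V$ pairing; each $Q_n$ is bounded on $X$, yet nothing in the hypotheses makes $\Vert Q_n\Vert_{X\to X}$ uniform in $n$ (the $e_j$ are orthonormal in $H$, not controlled in $X$ or $V$). The repair is standard and stays within your scheme: do not seek equicontinuity of $u_n$ in $X$, but only of the scalar functions $t\mapsto(u_n(t),e_j)$ for each fixed $j$, whose derivatives $-(\mathcal{A}(t,u_n),e_j)$ are uniformly bounded by $C\Vert e_j\Vert_V$ thanks to the continuity of the pairing; a diagonal Arzel\`a--Ascoli argument over $j$ then yields $u_n(t)\rightharpoonup u(t)$ in $H$ for every $t$ and $u\in C_w([0,T];H)$, after which your limit passage in the integrated identity and the identification $\partial_t u=-\mathcal{A}(t,u)$ in $C_w^1([0,T];X)$ (using density of $\mathrm{span}\{e_j\}$ in $V$ and nondegeneracy of the pairing) go through unchanged. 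With that adjustment the argument is complete; your closing worry about justifying the energy identity for the limit $u$ is moot, since the bound $\Vert u(t)\Vert_H^2\leq r(t)$ is inherited from the approximants by weak lower semicontinuity and is never re-derived at the limit level.
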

\begin{remark}(a)
If $\mathcal{A}$ maps $[0,T_*] \times V$ into $H$, we may replace \eqref{bilinear} with
\begin{align}
\label{bilinear1}
\langle \mathcal{A}(t,v), v\rangle_H\geq -\rho(\Vert v \Vert_H^2)\qquad \text{for}\quad t\in[0, T_*], \quad v\in V.
\end{align}
(b) $T$ and $r$ can be determined by solving the scalar differential equation
\begin{align}
\label{abstractODE}
\frac{\dd}{\dd t}r= 2\rho(r), \qquad r(0)=\Vert u_0 \Vert_H^2.
\end{align}
$T$ may be any value such that $r$ exists on $[0,T]$. If the solution to \eqref{abstractODE} is not unique, $r$ should be the maximal soluton.\\
(c) $u(t)\rightarrow u_0$ holds strongly in $H$ as $t\rightarrow0$. In other words, the solution \eqref{cw0T} is strongly continuous at the initial time $t=0$.
\end{remark}

%
\bibliographystyle{spmpsci}

\end{document}